\documentclass{tac}

\usepackage{xy}

\usepackage{amsfonts}

\usepackage{amssymb}

\usepackage{amsmath}

\usepackage{bbm}

\input diagxy

\thirdleveltheorems

\author {Marco Grandis and Robert Par\'e}

\thanks{Work partly supported by a research contract of the University of Genoa.}

\address{Dipartimento di Matematica, Universit\`a di Genova. Via Dodecaneso 35, 16146-Genova, Italy\\Department of Mathematics and Statistics, Dalhousie University,
 Halifax, NS, Canada, B3H 4R2}

\title {Intercategories: \\A framework for three-dimensional category theory}

\copyrightyear{2014}

\keywords{Intercategory, duoidal category, monoidal double category, cubical bicategory, Verity double bicategory, Gray category, spans}
\amsclass{18D05,18D10}

\eaddress{grandis@dima.unige.it\\pare@mathstat.dal.ca}

\def\ic{\bf\sf}

\mathrmdef{id}
\mathrmdef{inj}
\mathrmdef{Id}
\mathrmdef{Lan}

\def\CCat{{\mathbb C}{\rm at}}

\def\Doub{{\mathbb D}{\rm bl}}

\def\ICat{{\ic ICat}}

\def\DblSt{{\cal P}{\it s}{\cal D}{\it bl}}

\def\Dlax{{\cal L}{\it x}{\cal D}{\it bl}}
\def\Dcolax{{\cal C}{\it x}{\cal D}{\it bl}}

\def\Mlax{{\cal L}{\it x}{\cal M}{\it on}}

\mathrmdef{Ob}

\def\Cat{\mbox{{$\cal C$}\it at}}

\newcommand{\ps}[1]{\makebox[0pt]{$#1$}}

\newcommand{\ov}[1]{{\bar{#1}}}
\newcommand{\ovv}[1]{{\tilde{#1}}}

\newcommand{\todd}[2]{\xymatrix@1{\ar[r]|\bb^{#1}_{#2}&}}
\newcommand{\todo}[1]{\xymatrix@1{\ar[r]|\bb^{#1}&}}
\newcommand{\todu}[1]{\xymatrix@1{\ar[r]|\bb_{#1}&}}
\newcommand{\todol}[1]{\xymatrix@1@C=40pt{\ar[r]|\bb^{#1}&}}

\newcommand{\tod}{\xymatrix@1{\ar[r]|\bb&}}

\newcommand{\toc}{\xymatrix@1{\ar[r]|\cc&}}

\newtheoremrm{definition}{Definition}
\newtheorem{theorem}{Theorem}
\newtheorem{proposition}{Proposition}

\newtheorem{corollary}{Corollary}
\newtheoremrm{remark}{Remark}
\newtheoremrm{remarks}{Remarks}
\newtheoremrm{example}{Example}
\newtheoremrm{notation}{Notation}
\newtheoremrm{``proof''}{``Proof''}
\newtheoremrm{note}{Note}
\newtheoremrm{examples}{Examples}

\input xy
\xyoption{all}

\mathrmdef{Hom}
\mathrmdef[colim]{\varinjlim\,}

\mathbfdef{Set}

\def\SSpan{{\mathbb S}{\rm pan}}

\def\CCat{{\mathbb C}{\rm at}}
\def\Cosp{{\mathbb C}{\rm osp}}

\def\Cat{{\cal C}{\it at}}

\def\H{{\ic H}}

\newbox\bbox
\setbox\bbox = \hbox to 0pt{\hss $\scriptstyle\bullet$\hss}
\def\bb{\usebox{\bbox}}


\newbox\cbox
\setbox\cbox = \hbox to 0pt{\hss $\circ$\hss}
\def\cc{\usebox{\cbox}}

\begin{document}

\maketitle

\begin{abstract} We show how the notion of intercategory encompasses a wide variety of three-dimensional structures from the literature, notably duoidal categories, monoidal double categories, cubical bicategories, double bicategories and Gray categories. Variations on the notion of span provide further examples of interest, an important one being the intercategory of sets. We consider the three kinds of morphism of intercategory as well as the cells binding them with applications to the above structures. In particular hom functors are studied.

\end{abstract}

\tableofcontents

\section*{Introduction}

In this paper we propose intercategories, introduced in \cite{ICI}, as a conceptual framework for three-dimensional category theory. Many notions of three-dimensional category already appear in the literature, each with its own use, and no doubt many more will appear as the theory develops. Some of the more established ones which we discuss here are duoidal categories \cite{AM, BCZ, BS}, monoidal double categories \cite{Sh}, cubical bicategories \cite{GaGu}, Verity's double bicategories \cite{V} and Gray categories \cite{Gray}. As will be seen below, these can all be considered as special intercategories. The range and variety of examples is evidence of the unifying role intercategories can play. Not only do they provide an effective organization and unification of a large number of three-dimensional structures from the literature, but by putting these in a common setting it is possible to consider morphisms between them and study how they relate to each other. The more encompassing context often suggests useful generalizations which are more natural and actually come up in practice.

There are many ways of looking at intercategories each with its own intuition, complementing and augmenting the others. At the most basic level an intercategory is a laxified triple category, having three kinds of arrows, three kinds of cells relating these in pairs, and cubes relating these. We haven't striven for the most general laxity possible, but rather a specific choice, informed by the examples. This will become apparent as the paper progresses.

Intercategories are a natural extension of double categories to the next dimension. Just as double categories can be conceptually understood as two categories with the same objects, intercategories can be thought of as two double categories with a common horizontal category. Of course, the two double categories are related in more than this shared horizontal structure. There are three-dimensional cells, called cubes, and this is where the laxity comes in. This point of view is exploited in Section \ref{spans} where various double categories of spans or cospans interact.

A different connection to double categories, explored in Section \ref{Verity}, is that intercategories may be thought of as double categories whose horizontal {\em and} vertical compositions are weak, with an added transversal direction which is strict and used to express the coherence conditions. Weak double categories are strict in the horizontal direction; yet there are many cases where both compositions are weak, e.g. quintets in a bicategory. Trying to formalize this as a $2$-dimensional structure leads to a vicious circle.

One of the motivating examples for our definition of intercategory was $2$-monoidal or duoidal categories \cite{AM, BCZ, BS}. These are categories with two monoidal structures in which one of the tensors and its unit are monoidal with respect to the other. It is tempting to try to place this in the context of double categories, i.e. a double category with one object. Even if we allow weak composition in both directions, this doesn't work. It is only at the level of intercategories that it does. So we can think of an intercategory as a ``duoidal category with several objects''. This point of view is studied in Section \ref{duoidal}.

The main reason for defining $2$-monoidal categories in \cite{AM} was to study their morphisms and thus put some order in the large number of structures arising in Hopf algebra theory. There are three types of morphism corresponding to our lax-lax, colax-lax, and colax-colax functors. The ``intercategory as two double categories'' point of view would suggest four types, but the lax-colax ones (lax in the horizontal direction and colax in the vertical) don't come up. In fact they don't make sense! Having three types of morphism suggests  that there is a triple category of intercategories, which is indeed the case. There are appropriate $2$-cells and commutative cubes all fitting together nicely in a strict triple category. This is perhaps the main theorem of \cite{ICI}.

This suggests a further point of view, namely that the triple category of intercategories is a universe for doing higher category theory. Thus in Section \ref{Set} we study ``hom functors'' for intercategories and single out one particular intercategory where they take their values, which we believe can rightfully be called ${\ic Set}$, the intercategory of sets. 

Conspicuously lacking in our examples are tricategories \cite{GPS}. Intercategories are a competing notion. They are simpler because the associativity and unit constraints are isomorphisms rather than equivalences. The added freedom of having three different kinds of morphism allows many interesting structures, normally viewed as forming tricategories to be arranged into intercategories. The equivalences appear as a result of requiring globularity. This is the point of view put forth by Garner and Gurski in \cite{GaGu} with their cubical bicategories and Shulman in \cite{Sh} with monoidal double categories, both examples of intercategories. 

Each of the examples mentioned above is studied in detail below. The theory of intercategories suggests natural generalizations, supported by examples. We also introduce general constructions such as spans or quintets producing new intercategories. There are furthermore interesting morphisms of various kinds between all of the above and cells between these. All of this takes place inside the intercategory of intercategories, $ {\ic ICat} $, and is the basis of our claim of the unification of three-dimensional category theory.

\section{Preliminaries} \label{prelim}

In \cite{ICI} we introduced intercategories and their morphisms and exposed their basic properties. We gave three equivalent presentations. The first as pseudocategories in the $2$-category $\Dlax$ of weak double categories, lax functors and horizontal transformations. The second as pseudocategories in $ \Dcolax $, the $2$-category of weak double categories, colax functors and horizontal transformations. In fact, it is better, as far as morphisms are concerned, to consider these as horizontal (and, respectively, vertical) pseudocategories in $ \Doub $, the strict double category of weak double categories, lax functors, colax functors and their cells. These presentations are short and clear and an obvious generalization of duoidal categories, but a more intuitive presentation is as a double pseudocategory in $ \Cat  $. A pseudocategory in $ \Cat $ is a weak double category, so this presentation shows an intercategory as two double categories sharing a common horizontal structure. This is like thinking of a double category as two categories with the same objects. Of course the two structures are related, which is where interchange appears.

At a more basic level, an intercategory $ {\ic A} $ has a class of objects, and three kinds of arrows, horizontal, vertical and transversal each with their own composition ($ \circ$, $\bullet$, $\cdot $) and identities ($\id$, $\Id$, $1$, resp.). These are related in pairs by double cells as depicted in the diagram
$$
\bfig\scalefactor{.8}

\square(0,300)/@{>}|{\cc}`@{>}|{\bb}``/[A ` B `\ov{A} `;h ` v ` `]

\square(300,0)/@{>}|{\cc}`@{>}|{\bb}`@{>}|{\bb}`@{>}|{\cc}/[A' ` B' `\ov{A'}`\ov{B'}; h' ` v' `w' `\ov{h'}]

\morphism(0,800)|b|<300,-300>[A` A';f]

\morphism(500,800)<300,-300>[B ` B';g]

\morphism(0,300)|b|<300,-300>[\ov{A}`\ov{A'};\ov{f}]

\place(550,250)[\scriptstyle \alpha']

\place(150,400)[\scriptstyle \psi]

\place(400,650)[\scriptstyle \phi]

\efig
$$
Here the $ h $, $ h' $, $\ov{h'} $ are horizontal, $ v $, $ v'$, $ w' $ are vertical and $ f $, $ \ov{f} $, $ g $ transversal. Cells whose boundaries are horizontal and transversal, such as $ \phi $ above, are called {\em horizontal}, those whose boundaries are vertical and transversal, such as $ \psi $, are called {\em vertical}, and those like $ \alpha' $ with horizontal and vertical boundaries are {\em basic}. Each of the three types of cells has two compositions like in a double category. In fact horizontal (resp.\ vertical) cells are the double cells of a weak double category. The fundamental unit of structure is the {\em cube}, as depicted above. Cubes have three compositions: horizontal, vertical and transversal. Transversal composition is strictly associative and unitary, giving four transversal categories. Horizontal and vertical composition are associative and unitary up to coherent transversal isomorphism.

The first feature of intercategories which distinguishes them from what one might imagine a weak triple category would be, is that both horizontal and vertical composition are bicategorical in nature, rather than having one of the composites associative and unitary up to equivalence as for tricategories. So in this sense they are a stricter notion. But in another sense they are laxer. The interchange law for basic cells doesn't hold. Instead there is a comparison, the interchanger
$$
 \chi : (\alpha \circ \beta) \bullet (\ov{\alpha}\circ \ov{\beta}) \to (\alpha \bullet \ov{\alpha}) \circ (\beta \bullet \ov{\beta}) .
$$
$ \chi $ is a {\em special cube} meaning a cube whose horizontal and vertical faces are transversal identities. There will be many examples given below. The two-dimensional notation
$$
\chi : \frac{\alpha | \beta}{\ov{\alpha} | \ov{\beta}}     \to \left.\dfrac{\alpha}{\ov{\alpha}}\right|\dfrac{\beta}{\ov{\beta}}
$$
is often used. In it the variables (cells) don't change place. There are also {\em degenerate interchangers}:  
$$  
\mu : \dfrac{\id_v}{\id_{\ov{v}}} \to \id_{\frac{v}{\ov{v}}} 
$$
$$   
\delta : \Id_{h|h'} \to \Id_h | \Id_{h'}
$$
$$      
\tau : \Id_{\id_A} \to \id_{\Id_A}
$$
All these satisfy a number of coherence conditions, which can be found in \cite{ICI}.

If all interchangers are identities as well as the associativity and unit isomorphisms, we have a triple category. If they are all isomorphisms we talk of {\em weak triple category}. A case of special importance is when the $ \delta $, $ \mu $, $ \tau$ are identities while the $ \chi $ is allowed to be arbitrary. We call this a chiral triple category. It will play a central role in \cite{GP2}.

There are three general types of morphism of intercategory. They all  preserve the transversal structure on the nose. In the horizontal and vertical directions they can be lax or colax. We can have laxity in both directions, which we call lax-lax morphisms. Similarly there are colax-colax morphisms. The colax-lax morphisms are colax in the horizontal direction and lax in the vertical. The lax-colax doesn't come up. In fact the obvious coherence conditions produce diagrams in which none of the arrows compose.

\section{Duoidal categories}\label{duoidal}

\subsection{Duoidal categories as intercategories}\label{duoidal.1} 

\ 

Duoidal categories were introduced in \cite{AM} under the name of $2$-monoidal categories as a generalization of braided monoidal categories and motivated by various kinds of morphisms between these.

The classical Eckmann-Hilton argument says that a monoid in the category of monoids is a commutative monoid and we might think then that a pseudomonoid in the $2$-category of monoidal categories and strong monoidal functors could be, for similar reasons, a symmetric monoidal category. This is not quite true. What emerges is the important notion of  braided monoidal category as exposed in the now classical paper \cite{JS}.

If instead we consider pseudomonoids in the $2$-category of monoidal categories and lax monoidal functors we get categories equipped with two tensor products related by interchange morphisms. These morphisms express the fact that the second tensor is given by a lax functor with respect to the first, but could equally well be understood as saying that the first tensor is colax with respect to the second in a way that reminds us of the definition of bialgebra. This is the notion of {\em duoidal category} (or {\em $2$-monoidal category}).

Duoidal categories have been studied (apart from {\em loc.\ cit.}) in \cite{BS,BCZ}, where many examples are given.

Our notion of intercategory is partly modeled after this, so it will be no surprise that duoidal categories can be considered as special intercategories just as monoidal categories can be viewed as one-object bicategories. However, it is perhaps not in the first way one might try. One could think that, as a monoidal category is a one-object bicategory, what we've got are two related bicategories sharing the same objects which are perhaps part of a Verity double category. This would translate into an intercategory with one object and identity transversal arrows. The horizontal and vertical arrows would be the objects of our duoidal category with horizontal and vertical composition given by the two tensors. A general cube might look something like
$$
\bfig\scalefactor{.7}

\square(0,300)/>`>``/[*`*`*`;` ` `]

\square(300,0)/>`>`>`>/[*`*`*`*; ```]

\morphism(0,800)|b|/=/<300,-300>[*`*;]

\morphism(500,800)/=/<300,-300>[* ` *;]

\morphism(0,300)|b|/=/<300,-300>[*`*;]


\morphism(600,300)/=>/<-90,-90>[`;]

\morphism(100,450)/=>/<90,-90>[`;]

\morphism(350,700)/=>/<90,-90>[`;]

\efig
$$
This doesn't work. 

Definition 2.2 of \cite{ICI} says that an intercategory is a pseudocategory in $ \Dlax $, the $2$-category of weak double categories with lax functors and horizontal transformations. A monoidal category may be considered as a weak double category with one object and one horizontal morphism, the identity. Then lax functors are lax monoidal functors and horizontal transformations are monoidal natural transformations. So we have a full sub $2$-category $\Mlax $ of $ \Dlax $. Also, a pseudomonoid is a pseudocategory whose object of objects is $ {\mathbbm 1} $. In this way a duoidal category $ {\bf D} $, which is a pseudomonoid in $ \Mlax $, can be considered as a special intercategory. It will have one object, only identity horizontal, vertical and transversal arrows, and the horizontal and vertical cells are also identities. The only nontrivial parts are the basic cells which are the objects of $ {\bf D} $ and the cubes which are its morphisms. A general cube will look like
$$
\bfig\scalefactor{.7}

\square(0,300)/=`=``/[*`*`*`;` ` `]

\square(300,0)/=`=`=`=/[*`*`*`*; ```]

\morphism(0,800)|b|/=/<300,-300>[*`*;]

\morphism(500,800)/=/<300,-300>[* ` *;]

\morphism(0,300)|b|/=/<300,-300>[*`*;]

\place(550,250)[\scriptstyle D']

\morphism(100,450)/=/<70,-70>[`;]

\morphism(400,700)/=/<70,-70>[`;]

\efig
$$ 
with a morphism of $ {\bf D} $, $ d : D \to D' $, in it. The first tensor gives horizontal composition and the second tensor, the vertical.

As a double pseudocategory in $ {\cal CAT} $, ${\ic D} $ can be described by a diagram as in Section 3 of \cite{ICI}:

$$
\bfig\scalefactor{.7}

\square/>`<-`<-`>/[{\bf D}^2 `{\bf D}`{\bf D}^4`{\bf D}^2;```]

\square|allb|/@{>}@<-3pt>`@{<-}@<-3pt>`@{<-}@<-3pt>`@{>}@<-3pt>/[{\bf D}^2 `{\bf D}`{\bf D}^4`{\bf D}^2;```]

\square|allb|/@{>}@<3pt>`@{<-}@<3pt>`@{<-}@<3pt>`@{>}@<3pt>/[{\bf D}^2 `{\bf D}`{\bf D}^4`{\bf D}^2;```]

\square(0,500)/>`>`>`>/[{\mathbbm 1}`{\mathbbm 1}`{\bf D}^2`{\bf D};```]

\square(0,500)|allb|/@{>}@<-3pt>`@{<-}@<-3pt>`@{<-}@<-3pt>`@{>}@<-3pt>/[{\mathbbm 1}`{\mathbbm 1}`{\bf D}^2`{\bf D};```]

\square(0,500)|allb|/@{>}@<3pt>`@{<-}@<3pt>`@{<-}@<3pt>`@{>}@<3pt>/[{\mathbbm 1}`{\mathbbm 1}`{\bf D}^2`{\bf D};```]

\square(500,0)/<-`<-`<-`<-/[{\bf D} `{\mathbbm 1} `{\bf D}^2 `{\mathbbm 1};```]

\square(500,0)|allb|/@{>}@<-3pt>`@{<-}@<-3pt>`@{<-}@<-3pt>`@{>}@<-3pt>/[{\bf D} `{\mathbbm 1} `{\bf D}^2 `{\mathbbm 1};```]

\square(500,0)|allb|/@{>}@<3pt>`@{<-}@<3pt>`@{<-}@<3pt>`@{>}@<3pt>/[{\bf D} `{\mathbbm 1} `{\bf D}^2 `{\mathbbm 1};```]

\square(500,500)/<-`>`>`<-/[{\mathbbm 1} `{\mathbbm 1} `{\bf D} `{\mathbbm 1};```]

\square(500,500)|allb|/@{>}@<-3pt>`@{<-}@<-3pt>`@{<-}@<-3pt>`@{>}@<-3pt>/[{\mathbbm 1} `{\mathbbm 1} `{\bf D} `{\mathbbm 1};```]

\square(500,500)|allb|/@{>}@<3pt>`@{<-}@<3pt>`@{<-}@<3pt>`@{>}@<3pt>/[{\mathbbm 1} `{\mathbbm 1} `{\bf D} `{\mathbbm 1};```]

\efig
$$

Furthermore, the bilax, double lax and double colax morphisms of \cite{AM} correspond to our colax-lax, lax-lax and colax-colax functors.

A ready supply of duoidal categories can be gotten from monoidal categories ($ {\bf V}, \otimes, I$) with finite products. Indeed, ($ {\bf V}, \times, 1, \otimes, I$) is Example 6.19 of \cite{AM}. (Note however that, contrary to {\it loc.\ cit.}, we list the horizontal structure, product here, first.) No coherence between $ \otimes $ and product is assumed. Dually if $ {\bf V} $ has finite coproducts, then $ ({\bf V}, \otimes, I, +, 0) $ is a duoidal category. In particular, for any category $ {\bf A} $ with finite products and coproducts, we get a duoidal category $ ({\bf A}, \times, 1, +, 0) $.

\subsection{Matrices in a monoidal category}\label{duoidal.2}

\ 

A closely related intercategory is the following. Let $ {\bf V} $ be a monoidal category with coproducts preserved by $ \otimes $ in each variable, and with pullbacks. We construct an intercategory $ {\ic SM}({\bf V})$ whose objects are sets, whose transversal arrows are functions, whose horizontal arrows are spans, and whose vertical arrows are matrices of $ {\bf V} $ objects. Specifically, a vertical arrow $ A \tod B $ is an $ A \times B $ matrix $ [V_{ab}] $ of objects $ V_{ab} $ of $ {\bf V} $. Horizontal cells are span morphisms and vertical cells are matrices of morphisms of $ {\bf V} $. A basic cell is a span of matrices
$$
\bfig\scalefactor{.9}

\square/<-`@{>}|{\bb}`@{>}|{\bb}`<-/[A`S`B`T;\sigma_0`{[}V_{ab}{]}`{[}W_{st}{]}`\tau_0]

\morphism(180,200)|a|/<=/<180,0>[`;\scriptstyle {[}f_{st}{]}]

\square(500,0)/>``@{>}|{\bb}`>/[S`A'`T`B';\sigma_1 ``{[}V'_{a',b'}{]}`\tau_1]

\morphism(720,200)|a|/=>/<180,0>[`;\scriptstyle {[}g_{st}{]}]

\efig
$$
where
$$
V_{\sigma_0 s, \tau_0 t}  \to/<-/^{f_{st}} W_{st} \to^{g_{st}} V'_{\sigma_1 s, \tau_1 t}
$$
are morphisms of $ {\bf V} $. A general cube
$$
\bfig\scalefactor{.7}

\square(0,300)/<-`@{>}|{\bb}``/[A`S`B`;```]

\square(300,0)/<-`@{>}|{\bb}`@{>}|{\bb}`<-/[C`R`D`U;```]

\morphism(0,800)|b|<300,-300>[A`C;]

\morphism(500,800)<300,-300>[S` R;]

\morphism(0,300)|b|<300,-300>[B`D;]

\morphism(500,800)<500,0>[S`A';]

\morphism(1000,800)<300,-300>[A'`C';]

\square(800,0)/>`@{>}|{\bb}`@{>}|{\bb}`>/[R`C'`U`D';```]

\morphism(500,250)/<=/<150,0>[`;]

\morphism(1000,250)/=>/<150,0>[`;]

\morphism(120,400)/=>/<100,-100>[`;]

\morphism(500,750)/-/<0,-200>[`;]

\morphism(1000,750)/-/<0,-200>[`;]

\efig
$$
is a morphism of spans of matrices, i.e.
$$
\bfig\scalefactor{.7}

\square/>`@{>}|{\bb}`@{>}|{\bb}`>/[S`R`T`U;`{[}W_{st}{]}`{[}X_{ru}{]}`]

\morphism(200,250)/=>/<150,0>[`;]

\efig
$$
forming two commutative cubical diagrams.

In Section \ref{spans} we shall give a general construction showing, in particular, that this is indeed an intercategory. Unless $ \otimes $ preserves pullback, the interchanger $ \chi $ is not invertible. The identities are as follows. The horizontal identity $ \id_{[V_{ab}]} $ is
$$
\bfig\scalefactor{.7}

\square/<-`@{>}|{\bb}`@{>}|{\bb}`<-/[A`A`B`B;1`{[}V_{ab}{]}`{[}V_{ab}{]}`1]

\morphism(180,200)|a|/<=/<180,0>[`;1]

\square(500,0)/>``@{>}|{\bb}`>/[A`A`B`B;1 ``{[}V_{ab}{]}`1]

\morphism(720,200)|a|/=>/<180,0>[`;1]

\efig
$$
These compose vertically so
$$
\mu : \frac{\id_{{[}V_{ab}{]}}}{\id_{{[}W_{bc}{]}}}\ \to \id_{ {[}V_{ab}{]} \otimes {[}W_{bc}{]} }
$$
is equality.

\noindent The vertical identity $ \Id_S $ is
$$
\bfig\scalefactor{.7}

\square/<-`@{>}|{\bb}`@{>}|{\bb}`<-/[A`S`A`S;`\Id_A`\Id_S`]

\morphism(180,240)|a|/<=/<180,0>[`;]

\square(500,0)/>``@{>}|{\bb}`>/[S`A'`S`A';``\Id_{A'}`]

\morphism(700,240)|a|/=>/<180,0>[`;]

\efig
$$
where $ \Id_X : X \tod X $ is given by
$$
(\Id_X)_{x,x'} = \left\{ \begin{array}{ll}
   I & \mbox{if\ \ } x = x'\\
  0 & \mbox{otherwise}
\end{array}
\right.
$$
and for $ f : X \to Y $
$$
\bfig\scalefactor{.7}

\square/>`@{>}|{\bb}`@{>}|{\bb}`>/[X`Y`X`Y;f`\Id_X `\Id_Y`f]

\morphism(200,200)|a|/=>/<150,0>[`;\Id_f]

\efig
$$
is given by
$$
(\Id_f)_{x,x'} = \left\{ \begin{array}{ll}
    1_I : I \to I   & \mbox{if\ \ } x = x' \\
    ! :\ \  0 \to I & \mbox{if\ \ } x \neq x' \mbox{\ \ and\ \ } fx = fx' \\
    1_0 : 0 \to 0 & \mbox{if\ \ } fx \neq fx'.
\end{array}
\right.
$$

The horizontal composition $ \Id_S | \Id_{S'} $ will usually involve the pullback
$$
\bfig\scalefactor{.7}

\square[0\times_I 0 ` 0 `0 `I;```]

\efig
$$
and unless this is $ 0 $ (i.e.\ $ 0 \to I $ is mono), $ \delta : \Id_{S \otimes S'} \to \Id_{S} | \Id_{S'} $ will not be invertible. Finally $ \tau : \Id_{\id_A} \to \id_{\Id_A} $ is always the identity.

By contrast, all of the interchangers $ \chi, \mu, \delta, \tau $ are generally not invertible for the cartesian product/tensor duoidal category of Subsection \ref{duoidal.1}.

\subsection{Embedding the duoidal category of $ {\bf V} $ into matrices}\label{duoidal.3}

\ 

If $ {\bf V} $ has a terminal object, we can embed $ ({\bf V}, \times, \otimes) $, considered as an intercategory, into $ {\ic SM}({\bf V}) $ as follows. A basic cell of $ {\bf V} $ is embedded as
$$
\bfig\scalefactor{.7}

\square/=`=`=`=/[*`*`*`*;```]

\place(250,250)[V]

\place(700,250)[\longmapsto]

\square(1000,0)/<-`@{>}|{\bb}`@{>}|{\bb}`<-/[1`1`1`1;`{[}1{]}`{[}V{]}`]

\morphism(1180,240)|a|/<=/<180,0>[`;]

\square(1500,0)/>``@{>}|{\bb}`>/[1`1`1`1;``{[}1{]}`]

\morphism(1680,240)|a|/=>/<180,0>[`;]

\efig
$$
The extension in the transversal direction is obvious. $ {\bf V} $ cannot be a strict subintercategory of $ {\ic SM}({\bf V}) $ as this would imply that $ \mu $ and $ \tau $ for $ {\bf V} $ are identities.  Indeed, the vertical arrow $ [1] : 1 \tod 1 $ is the $ 1 \times 1 $ matrix whose sole entry is $ 1 $, the terminal object of $ {\bf V} $, whereas the vertical identity is the one whose entry is $ I $, the unit for $ \otimes$. What we have is an inclusion $ F : {\bf V} \to {\ic SM}({\bf V}) $ which is strong in the horizontal direction and lax in the vertical direction. So it can be considered as a lax-lax or a colax-lax morphism. 

There is also a morphism in the opposite direction, $ G : {\ic SM}({\bf V}) \to {\bf V} $, taking a basic cell
$$
\bfig\scalefactor{.7}

\square/<-`@{>}|{\bb}`@{>}|{\bb}`<-/[A`S`B`T;`{[}U_{ab}{]}`{[}V_{st}{]}`]

\morphism(180,240)|a|/<=/<180,0>[`;]

\square(500,0)/>``@{>}|{\bb}`>/[S`A'`T`B';``{[}U'_{a',b'}{]}`]

\morphism(700,240)|a|/=>/<180,0>[`;]

\efig
$$ 
to the cell
$$
\bfig\scalefactor{.8}

\square/=`=`=`=/[*`*`*`*;```]

\place(250,250)[\scriptstyle\sum_{s,t} V_{st}]

\efig
$$
with the obvious extension in the transversal direction. $ G $ is colax-colax. For example, the identity structure morphisms are
$$
G(\id_{[U_{ab}]}) = \sum_{a,b} U_{a,b} \to 1
$$
and 
$$ G(\Id_S) = \nabla : \sum_S I \to I 
$$
the codiagonal.
$ G $ is left adjoint to $ F $ in the following sense. $ F $ may be considered as a lax-lax morphism or a colax-lax morphism, i.e.\ as a horizontal or a transversal arrow in $\ICat $, the triple category of intercategories. As transversal arrows are generally better let's consider $ F $ as such. Then $ F $ and $ G $ are horizontal and vertical arrows in the double category of transversal and vertical arrows of $ \ICat $, i.e.\ in $ {\mathbb P}{\rm s}{\mathbb C}{\rm at} (\Dcolax) $; moreover, $ F $ and $ G $ are conjoint arrows in the latter.

To see this we need double cells
$$
\bfig\scalefactor{.7}

\square/>`=`>`=/[{\bf V} `{\ic SM}({\bf V})`{\bf V} `{\bf V};F``G`]

\place(250,250)[\scriptstyle \alpha]

\square(1200,0)/=`>`=`>/<600,500>[{\ic SM}({\bf V})`{\ic SM}({\bf V})`{\bf V} `{\ic SM}({\bf V});`G``F]

\place(1500,250)[\scriptstyle \beta]

\efig
$$
satisfying the ``triangle equalities''. Such double cells take objects, horizontal and vertical arrows, and basic cells to transversal arrows, horizontal and vertical cells, and cubes respectively. $ GF $ is the identity on all elements and $ \alpha : GF \to \id \cdot \Id $ is taken to be the appropriate identity.

The various components of $ \beta : \Id \cdot \id \to F \cdot G $ can be read off from its action on a basic cell
$$
\bfig\scalefactor{.8}

\square/<-`@{>}|{\bb}`@{>}|{\bb}`<-/[A`S`B`T;`{[}U_{ab}{]}`{[}V_{st}{]}`]

\morphism(180,240)|a|/<=/<180,0>[`;]

\square(500,0)/>``@{>}|{\bb}`>/[S`A'`T`B';``{[}U'_{a',b'}{]}`]

\morphism(700,240)|a|/=>/<180,0>[`;]

\efig
$$ 
which produces the cube
$$
\bfig\scalefactor{.8}

\square(0,300)/<-`@{>}|{\bb}``/[A`S`B`;`{[}U_{ab}{]}``]

\square(300,0)/<-`@{>}|{\bb}`@{>}|{\bb}`<-/[1`1`1`1;`{[}1{]}`{[}\sum_{st} V_{st}{]}`]

\morphism(0,800)|b|<300,-300>[A`1;]

\morphism(500,800)<300,-300>[S` 1;]

\morphism(0,300)|b|<300,-300>[B`1;]

\morphism(500,800)<500,0>[S`A';]

\morphism(1000,800)<300,-300>[A'`1;]

\square(800,0)/>`@{>}|{\bb}`@{>}|{\bb}`>/[1`1`1`1;``{[}1{]}`]



\morphism(500,800)/-/<0,-250>[`;]

\morphism(1000,800)/-/<0,-250>[`;]

\efig
$$
where the middle cell is given by the coproduct injections
$$
\bfig\scalefactor{.7}

\square/>`@{>}|{\bb}`@{>}|{\bb}`>/[S`1`T`1;`{[}V_{st}{]} `{[}\sum_{s,t} V_{st}{]}`]

\place(250,250)[\scriptstyle {[}j_{st}{]}]

\efig
$$
(All the other morphisms are uniquely determined.)

Checking that $ \alpha $ and $ \beta $ are double cells and that they satisfy the conjoint equations is straightforward and omitted.





\section{Monoidal double categories and cubical bicategories}\label{MonDbl}

\subsection{Monoidal double categories}\label{MonDbl.1} 

\ 

In \cite{Sh}, Shulman uses a notion of monoidal double category to construct monoidal bicategories. The notion of monoidal double category is simpler because the coherence morphisms are isomorphisms rather than equivalences, which makes the coherence conditions much easier. In {\em loc.\ cit.} many examples are given building a strong case for the point of view that the seemingly more complicated notion of double category is in fact simpler than that of bicategory.

A {\em monoidal double category} \cite{Sh} is a pseudomonoid in the $2$-category $\DblSt $ of (weak) double categories with pseudo functors and horizontal transformations:
$$
\otimes : {\mathbb D} \times {\mathbb D} \to {\mathbb D},
$$
$$
I : {\mathbbm 1} \to {\mathbb D}.
$$

As $ \DblSt $ is a sub $2$-category of $\Dlax $ (and $\Dcolax$) and intercategories involve only pullbacks of strict double functors, which are in $ \DblSt $, it follows that a weak category object in $ \DblSt $ is also one in $ \Dlax $ (and $\Dcolax $), i.e.\ an intercategory. It is one in which the interchangers $ \chi $, $ \mu $, $\delta $, $\tau $ are all isomorphisms. So a monoidal double category is an intercategory of the form
$$
{\mathbb D} \times {\mathbb D}  \threepppp/>`>`>/<400>^{} |{} _{} {\mathbb D} \three/>`<-`>/^{} | {}_{} {\mathbbm 1}
$$
with strong interchangers (isomorphisms).

It is an intercategory with one object, one transversal arrow, one vertical arrow and one vertical cell, all identities of course.

Furthermore interchange holds up to isomorphism. As a double pseudocategory in $ {\cal CAT} $, it looks like

$$
\bfig\scalefactor{.7}

\square/>`<-`<-`>/[{\bf D}^2_1 `{\bf D}_1 `{\bf D}^2_2 `{\bf D}_2;```]

\square|allb|/@{>}@<-3pt>`@{<-}@<-3pt>`@{<-}@<-3pt>`@{>}@<-3pt>/[{\bf D}^2_1 `{\bf D}_1 `{\bf D}^2_2 `{\bf D}_2;```]

\square|allb|/@{>}@<3pt>`@{<-}@<3pt>`@{<-}@<3pt>`@{>}@<3pt>/[{\bf D}^2_1 `{\bf D}_1 `{\bf D}^2_2 `{\bf D}_2;```]

\square(0,500)/>`>`>`>/[{\bf D}^2_0 `{\bf D}_0 `{\bf D}^2_1 `{\bf D}_1;```]

\square(0,500)|allb|/@{>}@<-3pt>`@{<-}@<-3pt>`@{<-}@<-3pt>`@{>}@<-3pt>/[{\bf D}^2_0 `{\bf D}_0 `{\bf D}^2_1 `{\bf D}_1;```]

\square(0,500)|allb|/@{>}@<3pt>`@{<-}@<3pt>`@{<-}@<3pt>`@{>}@<3pt>/[{\bf D}^2_0 `{\bf D}_0 `{\bf D}^2_1 `{\bf D}_1;```]

\square(500,0)/<-`<-`<-`<-/[{\bf D}_1`{\mathbbm 1}`{\bf D}_2 `{\mathbbm 1};```]

\square(500,0)|allb|/@{>}@<-3pt>`@{<-}@<-3pt>`@{<-}@<-3pt>`@{>}@<-3pt>/[{\bf D}_1`{\mathbbm 1}`{\bf D}_2 `{\mathbbm 1};```]

\square(500,0)|allb|/@{>}@<3pt>`@{<-}@<3pt>`@{<-}@<3pt>`@{>}@<3pt>/[{\bf D}_1`{\mathbbm 1}`{\bf D}_2 `{\mathbbm 1};```]

\square(500,500)/<-`>`>`<-/[{\bf D}_0 `{\mathbbm 1} `{\bf D}_1 `{\mathbbm 1};```]

\square(500,500)|allb|/@{>}@<-3pt>`@{<-}@<-3pt>`@{<-}@<-3pt>`@{>}@<-3pt>/[{\bf D}_0 `{\mathbbm 1} `{\bf D}_1 `{\mathbbm 1};```]

\square(500,500)|allb|/@{>}@<3pt>`@{<-}@<3pt>`@{<-}@<3pt>`@{>}@<3pt>/[{\bf D}_0 `{\mathbbm 1} `{\bf D}_1 `{\mathbbm 1};```]

\efig
$$

The conditions (iv) of {\em loc.\ cit.\ }correspond to (24), (26), (25) in \cite[Sect. 4]{ICI}, conditions (v) to (27), (28), and conditions (vi) to (31), (30), (29), (32). Our conditions (21), (22), (23) don't appear there because the structural isomorphisms of the double category $ {\mathbb D} $ were treated as identities.

A monoidal double category can equally well be viewed as an intercategory with one object, one transversal arrow, one horizontal arrow and one horizontal cell by using the inverse interchangers. The $ 3\times 3 $ diagram of categories would then be the transpose of the above.

\subsection{Horizontal and vertical monoidal double categories}\label{MonDbl.2}

\ 

In the present context, it seems natural to generalize Shulman's notion of monoidal double category by removing the restriction that the interchangers be isomorphisms. We then get two distinct generalizations of monoidal double category corresponding to the cases just mentioned. One in which $ \otimes : {\mathbb D} \times {\mathbb D} \to {\mathbb D} $ and $ I : {\mathbbm 1} \to {\mathbb D} $ are lax, which we call {\em horizontal monoidal double category}, and the other where $ \otimes $ and $ I $ are colax, which we call {\em vertical}.

Let us examine this in more detail. For notational convenience we look at vertical monoidal double categories. That $ \otimes $ and $ I $ are colax means that we have comparison cells

$$
\bfig\scalefactor{.8}

\square/``@{>}|{\bb}`=/<500,350>[`\ov{A} \otimes \ov{B} ` \ovv{A} \otimes \ovv{B} ` \ovv{A} \otimes \ovv{B};``\ov{v}\otimes \ov{w}`]

\square(0,350)/=``@{>}|{\bb}`/<500,350>[A\otimes B ` A\otimes B``\ov{A}\otimes \ov{B};``v\otimes w`]

\square/=`@{>}|{\bb}``=/<500,700>[A \otimes B `A\otimes B`\ovv{A}\otimes\ovv{B} `\ovv{A} \otimes \ovv{B};`(v\bullet \ov{v})\otimes (w \bullet \ov{w})``]

\place(250,350)[\scriptstyle \chi]

\square(1300,150)/=`@{>}|{\bb}`@{>}|{\bb}`=/[A \otimes B ` A \otimes B ` A \otimes B `A\otimes B;`\id_A \otimes \id_B `\id_{A \otimes B} `]

\place(1550,400)[\scriptstyle \mu]

\efig
$$
$$
\bfig\scalefactor{.8}

\square/``@{>}|{\bb}`=/<500,350>[`I` I` I;``I_\id`]

\square(0,350)/=``@{>}|{\bb}`/<500,350>[I`I``I;``I_\id`]

\square/=`@{>}|{\bb}``=/<500,700>[I `I `I`I;`I_\id``]

\place(250,350)[\scriptstyle \delta]

\square(1300,150)/=`@{>}|{\bb}`@{>}|{\bb}`=/[I`I`I`I;`I_\id `\id_I`]

\place(1550,400)[\scriptstyle \tau]

\efig
$$
satisfying the conditions (21)-(32) of Section 3 in \cite{ICI}.

This definition encompasses duoidal categories. Another example is a double category with a lax choice of finite products, as in \cite{GP99}.

\subsection{Endomorphisms in an intercategory}\label{MonDbl.3}

\ 

Just like the set of endomorphisms of an object in a category has a monoid structure, if we fix an object $ A $ of an intercategory $ {\ic A} $ we get two monoidal double categories of endomorphisms, a horizontal one $ {\mathbb H}{\mathbb E}{\rm nd} (A) $ and a vertical one $ {\mathbb V}{\mathbb E}{\rm nd} (A) $ (or $ {\mathbb H}{\mathbb E}{\rm nd}_{\ic A} (A) $ and $ {\mathbb V}{\mathbb E}{\rm nd}_{\ic A} (A)$ if there are several intercategories). As an intercategory, $ {\ic HEnd} (A) $ has the same structure as $ {\ic A} $ except that we only consider the one object $ A $ as well as only the identity transversal arrow $ 1_A $, the identity vertical arrow $ \Id_A $, and the identity vertical cell $1_{\Id_A} $. So a general cube would be an $ {\ic A} $ cube that looks like 
$$
\bfig\scalefactor{.7}

\square(0,300)/>`=``/[A ` A `A `;f ` ` `]

\square(300,0)/>`=`=`>/[A ` A`A `A; f' `  ` `g']

\morphism(0,800)|b|/=/<300,-300>[A `A;]

\morphism(500,800)/=/<300,-300>[A` A;]

\morphism(0,300)|b|/=/<300,-300>[A`A;]

\place(550,250)[\scriptstyle \phi']


\place(400,650)[\scriptstyle \alpha]

\morphism(100,450)/=/<70,-70>[`;]

\efig
$$
As a monoidal double category, $ {\mathbb H}{\mathbb E}{\rm nd}(A) $ has objects the horizontal endomorphisms of $ {\ic A} $, horizontal arrows the horizontal cells, and vertical arrows the basic cells. The tensor product is given by horizontal composition. This indeed gives us what we are calling a horizontal monoidal double category. It will only be a monoidal double category in the sense of \cite{Sh} if the interchangers $ \chi, \delta, \mu, \tau $ are isomorphisms when restricted to basic cells of the form
$$
\bfig\scalefactor{.7}

\square/>`=`=`>/[A`A`A`A;```]

\place(250,250)[\scriptstyle \phi]

\efig
$$

The construction of $ {\mathbb V}{\mathbb E}{\rm nd}(A) $ is dual, and considers only cubes of the form
$$
\bfig\scalefactor{.7}

\square(0,300)/=`@{>}|{\bb}``/[A ` A `A `; `v ` `]

\square(300,0)|arrb|/=`@{>}|{\bb}`@{>}|{\bb}`=/[A ` A`A `A;  `v'  `w' `]

\morphism(0,800)|b|/=/<300,-300>[A `A;]

\morphism(500,800)/=/<300,-300>[A` A;]

\morphism(0,300)|b|/=/<300,-300>[A`A;]

\place(550,250)[\scriptstyle \phi']

\place(150,400)[\scriptstyle \beta]

\morphism(350,700)/=/<70,-70>[`;]


\efig
$$
This produces a vertical monoidal double category.

\subsection{Matrices in a duoidal category}\label{MonDbl.4}

\ 

We outline an interesting example of a horizontal monoidal double category constructed from a duoidal category $ ({\bf D}, \otimes, I, \boxtimes, J) $ having coproducts over which $ \boxtimes $ distributes. The double category $ {\mathbb D} $ has sets as objects, functions as horizontal arrows, matrices of $ {\bf D}$-objects as vertical arrows and matrices of $ {\bf D}$-morphisms as cells. Vertical composition is given by matrix multiplication using $ \boxtimes $, and vertical identities $ \Id $ are ``scalar matrices'' with $ J $ on the diagonal.  This is what we called $ {\bf V}$-${\mathbb S}{\rm et} $ in \cite{P} with $ {\bf V} = ({\bf D}, \boxtimes, J) $.

The tensor product $ \otimes : {\mathbb D} \times {\mathbb D} \to {\mathbb D} $ is cartesian product on objects (sets) and horizontal arrows (functions). For vertical arrows, it is defined pointwise using the $ \otimes $ of $ {\bf D} $
$$
\bfig\scalefactor{.7}

\square/`@{>}|{\bb}`@{>}|{\bb}`/<400,500>[A`A'`B`B';`{[}V_{ab}{]}`{[}V'_{a'b'}{]}`]

\morphism(850,250)/|->/<300,0>[`;\otimes]

\square(1500,0)|arrb|/`@{>}|{\bb}``/<400,500>[A \times A'``B \times B'`;`{[}V_{ab}\otimes V'_{a'b'}{]}``]

\efig
$$
with the obvious extension to cells. The unit for $ \otimes $ is the $ 1 \times 1 $ matrix with entry $ I $.

The laxity morphisms of $ \otimes $ are as follows. Suppose $ [W_{bc}] : B \tod C $ and $ [W'_{b'c'}] : B' \tod C' $ are two more vertical arrows of $ {\mathbb D} $. Then
$$
\chi : (V \otimes V') \bullet (W \otimes W') \to (V \bullet W) \otimes (V' \bullet W')
$$
has as its $ (a, a')$, $(c,c')$ component the composite
$$
\sum_{(b,b')} (V_{ab} \otimes V'_{a'b'}) \boxtimes (W_{bc} \otimes W'_{b'c'}) \to^{\sum_{(b,b')} \chi}
$$
$$
 \sum_{(b,b')} (V_{ab} \boxtimes W_{bc}) \otimes (V'_{a'b'} \boxtimes W'_{b'c'}) \to^{[j_b \otimes j_{b'}]}
$$
$$
(\sum_b V_{ab} \boxtimes W_{bc}) \otimes (\sum_{b'} V'_{a'b'} \boxtimes W'_{b'c'})
$$
where $ j_b $, $ j_{b'} $ are coproduct injections.

The $ (a,b)$, $(a',b') $ component of
$$
\delta : \Id_{A \times B} \to \Id_A \otimes \Id_B
$$
is given by
$$
\begin{array}{lll}
 \delta : J \to J\otimes J & if & a = a', b = b'\\
  ! : 0 \to J \otimes 0     & if & a = a', b \neq b'\\
  ! : 0 \to 0 \otimes J     & if & a \neq a', b = b'\\
  ! : 0 \to 0 \otimes 0    & if & a \neq a', b \neq b'
\end{array}
$$
The laxity morphisms for $ I : {\mathbbm 1} \to {\mathbb D} $ are given by
$$
\mu : I \boxtimes I \to I
$$
and
$$
\tau : J \to I.
$$
The routine calculations showing that we actually get a horizontal monoidal double category are omitted.

\subsection{Locally cubical bicategories}\label{MonDbl.5}

\ 

A multiobject version of monoidal double categories is given by Garner and Gurski's {\em locally cubical bicategories} \cite{GaGu}. These are categories weakly enriched in the monoidal (cartesian) $2$-category $\DblSt$. So a class $ \mbox{Ob}{\cal A} $ of objects is given, and for each pair $ A, B \in \mbox{Ob}{\cal A} $ a weak double category $ {\cal A}(A,B) $. For each object $ A $ there is given a strong functor
$$
\Id_A : {\mathbbm 1} \to {\cal A}(A,A)
$$
and for any three objects $ A, B, C $, a strong functor
$$
\bullet : {\cal A} (A,B) \times {\cal A} (B,C) \to {\cal A} (A,C).
$$
This composition is unitary and associative up to coherent isomorphism (see {\em loc.\ cit.\ }for details).

One can get a good feel for this structure by considering the {\em category ${\bf StDbl}$} of strict double categories and strict functors. These are category objects and their functors in $ {\bf Cat} $ and so form a cartesian closed category. That is, for any two double categories $ {\mathbb A} $ and ${\mathbb B} $ we have a double category $ {\mathbb B}^{\mathbb A} $ of morphisms from $ {\mathbb A} $ to $ {\mathbb B} $. One can easily work out what $ {\mathbb B}^{\mathbb A} $ looks like. Its objects are strict functors, its horizontal arrows are the horizontal transformations we have been using, its vertical arrows are the dual notion of vertical transformation, and its cells are modifications. This makes ${\bf StDbl}$ into a category enriched in itself, i.\ e.\ a strict locally cubical bicategory.

Returning to the non strict case, we can combine the whole structure into a pseudocategory in $ \DblSt $:
$$
\sum_{A,B,C} {\cal A}(A,B) \times {\cal A} (B,C) \threepppp/>`>`>/<400>^{p_1} |{\bullet} _{p_2} \sum_{A,B}{\cal A}(A,B) \three/>`<-`>/^{\partial_0} | {\id}_{\partial_1} \mbox{Ob} {\cal A}
$$
where $ \mbox{Ob}{\cal A} $ is a discrete double category.

Thus we see that a locally cubical bicategory is an intercategory in which the only transversal and vertical arrows are identities as well as vertical cells, and for which the interchangers are isomorphisms.

A general cell might be pictured as
$$
\bfig\scalefactor{.9}

\morphism(0,0)|a|/{@{>}@/^1em/}/<700,0>[A`B;]

\morphism(0,0)|a|/{@{>}@/^.37em/}/<700,0>[A`B;]

\morphism(0,0)|b|/{@{>}@/_1em/}/<700,0>[A `B;]

\morphism(260,100)/>/<100,-50>[`;]

\morphism(360,90)/@{>}|{\bb}/<0,-190>[`;]

\efig
$$
(with a double cell of $ {\cal A}(A,B) $ inside it) which looks more like a cube if we insert the horizontal and vertical identities
$$
\bfig\scalefactor{.7}

\square(0,300)/>`=``/[A ` B `A `; `  ` `]

\square(300,0)/>`=`=`>/[A ` B `A `B;  `  ``]

\morphism(0,800)|b|/=/<300,-300>[A` A;]

\morphism(500,800)/=/<300,-300>[B ` B;]

\morphism(0,300)|b|/=/<300,-300>[A`A;]

\morphism(100,450)/=/<70,-70>[`;]

\morphism(550,400)/@{>}|{\bb}/<0,-250>[`;]

\morphism(340,700)/>/<100,-100>[`;]

\efig
$$

In a locally cubical bicategory, the composition and identity operations are pseudofunctors, but in our discussion above we consider them as lax morphisms. We could of course extend the definition of locally cubical bicategory to lax composition and identity, and there is no problem getting an intercategory this way as it is just by definition. The cubes look the same. It is just the interchangers that are not invertible now. However, we don't have any good examples and so have not developed this further.

We could also view composition and identity as colax functors, and get a transposed representation of a locally cubical bicategory as an intercategory in which cubes look like
$$
\bfig\scalefactor{.7}

\morphism(0,800)/{@{>}@/_1.2em/}/<0,-800>[A`B;]

\morphism(0,800)/{@{>}@/^-.37em/}/<0,-800>[A`B;]

\morphism(0,800)/{@{>}@/^1.2em/}/<0,-800>[A`B;]

\morphism(-195,440)/>/<140,-70>[`;]

\morphism(-60,365)/@{>}|{\bb}/<250,0>[`;]

\place(600,400)[\mbox{or}]

\square(1200,300)/=`>``/[A ` A `B `; `  ` `]

\square(1500,0)/=`>`>`=/[A ` A `B `B;  `  ``]

\morphism(1200,800)|b|/=/<300,-300>[A` A;]

\morphism(1700,800)/=/<300,-300>[A ` A;]

\morphism(1200,300)|b|/=/<300,-300>[B`B;]

\morphism(1300,450)/>/<100,-100>[`;]

\morphism(1650,250)/@{>}|{\bb}/<250,0>[`;]

\morphism(1540,700)/=/<100,-100>[`;]

\efig
$$
and again we can relax the conditions on composition and identity to being merely colax.

In view of this it is tempting, as the referee has suggested, to replace the basic cells and cubes by quintets and get a nicer, more symmetric, representation of a locally cubical bicategory as an intercategory. A cube would look like
$$
\bfig\scalefactor{.7}

\square(0,300)/>`>``/[A ` B `C `; F`H  ` `]

\square(300,0)|brrb|/>`>`>`>/[A ` B `C `D;F'`H' `K'`G']

\morphism(0,800)|b|/=/<300,-300>[A` A;]

\morphism(500,800)/=/<300,-300>[B ` B;]

\morphism(0,300)|b|/=/<300,-300>[C`C;]

\morphism(100,450)/>/<100,-100>[`;k]

\morphism(550,400)|r|/@{>}|{\bb}/<0,-250>[`;v']

\morphism(340,700)/>/<100,-100>[`;f]











\efig
$$
with a cell
$$
\bfig\scalefactor{.7}

\square/>`@{>}|{\bb}`@{>}|{\bb}`>/<700,500>[K \bullet F `K' \bullet F' `G \bullet H `G' \bullet H';k \bullet f `v ` v' ` l \bullet m]

\place(350,250)[\scriptstyle \alpha]

\efig
$$
This seems to work although the details, which are formidable, have not been completely checked and they don't appear to follow from the general machinery we have developed so far. This will have to await further work.

\section{Verity double bicategories}\label{Verity}

\subsection{Double bicategories}\label{Verity.1}

\  

Double bicategories are, at least in part, an answer to the problem of making double categories weak in both directions. For example, we could take quintets in a bicategory $ {\cal B} $. This structure has the same objects as $ {\cal B} $ with horizontal and vertical arrows the arrows of $ {\cal B} $ and with double cells diagrams
$$
\bfig\scalefactor{.7}

\square[A`B`C`D;f`h`k`g]

\morphism(320,290)/=>/<-100,-100>[`;t]

\efig
$$
where $ t : k f \to g h $ is a $2$-cell. Such cells can be pasted horizontally and vertically, and everything works well (including interchange) except that neither horizontal nor vertical composition is associative or unitary on the nose.

A simpler example is the transpose of a weak double category, where horizontal and vertical are interchanged. This is a useful duality for strict double categories but is not available for weak ones.

Attempts at a direct definition of double categories, weak in both directions, just lead to vicious circles. The problem lies with the special cells used in the coherence conditions for the definition of weak double category. These are cells whose vertical domains and codomains are horizontal identities, but if these identities are not strict identities, then horizontal composition of special cells would require the use of vertical special cells, and now the same problem arises. The resolution is achieved by formalizing special cells. This is done by giving as extra structure, cells between arrows whose domains (and codomains) are the {\em same}, i.e.\ {\em globular cells} as well as the double ones. Although the special cells involved in the definition of weak double category are all isomorphisms, non invertible ones come up in the definition of lax and colax functor.

We sketch Verity's definition of double bicategory. The reader is referred to \cite{V} for details. Section 3.2 of \cite{VM} also gives a very readable account.

To start with we are given two bicategories $ {\cal H} $ and $ {\cal V} $ which share the same class of objects $ A $ and then we are given a class of squares $ {\cal S} $ with boundaries
$$
\bfig\scalefactor{.7}

\square/>`@{>}|{\bb}`@{>}|{\bb}`>/[a`a'`\ov{a}`\ov{a'};h `v`v'`\ov{h}]

\place(250,250)[\scriptstyle \sigma]

\efig
$$
$ h$, $ \ov{h} $ arrows of $ {\cal H} $ and $ v $, $ v' $ arrows of $ {\cal V} $. There are furthermore left and right actions of the $2$-cells of $ {\cal V} $ on the $ \sigma $ and top and bottom actions of those of $ {\cal H} $ on them as well, e.g.
$$
\bfig\scalefactor{.7}

\square/`@{>}|{\bb}`@{>}|{\bb}`>/[a`a'`\ov{a}`\ov{a'};`v`v'`\ov{h}]

\place(250,200)[\scriptstyle \sigma]

\place(850,250)[\longmapsto]

\square(1200,0)/>`@{>}|{\bb}`@{>}|{\bb}`>/[a`a'`\ov{a}`\ov{a'};h' `v `v'`\ov{h}]

\place(1450,250)[\scriptstyle \alpha *_V \sigma]

\morphism(0,500)|a|/{@{>}@/^1em/}/<500,0>[a`a';h']

\morphism(0,500)|b|/{@{>}@/_1em/}/<500,0>[a `a';h]

\morphism(250,600)|l|/=>/<0,-180>[`;\alpha]

\efig
$$
These four actions commute (strictly). Finally the squares can be pasted horizontally and vertically. Horizontal and vertical pasting is associative and unitary once the structural isomorphisms of $ {\cal H} $ (or ${\cal V}$) are factored in so as to make domains and codomains agree. The interchange law for squares holds strictly.

We already have the beginning of an intercategory
$$
\bfig\scalefactor{.7}

\square(0,500)/>```/<700,500>[{\bf H} \times_A {\bf H} ` {\bf H}``{\bf S};```]

\square(0,500)|allb|/@{>}@<-3pt>```/<700,500>[{\bf H} \times_A {\bf H} ` {\bf H}``{\bf S};```]

\square(0,500)|allb|/@{>}@<3pt>```/<700,500>[{\bf H} \times_A {\bf H} ` {\bf H}``{\bf S};```]

\square(700,0)/``<-`/<700,500>[{\bf S} `{\bf V}``{\bf V} \times_A {\bf V};```]

\square(700,0)|allb|/@{>}@<-3pt>``@{<-}@<-3pt>`/<700,500>[{\bf S} `{\bf V}``{\bf V} \times_A {\bf V};```]

\square(700,0)|allb|/@{>}@<3pt>``@{<-}@<3pt>`/<700,500>[{\bf S} `{\bf V}``{\bf V} \times_A {\bf V};```]

\square(700,500)/<-``>`/<700,500>[{\bf H} `A `{\bf S} `{\bf V};```]

\square(700,500)|allb|/@{>}@<-3pt>`@{<-}@<-3pt>`@{<-}@<-3pt>`@{>}@<-3pt>/<700,500>[{\bf H} `A `{\bf S} `{\bf V};```]

\square(700,500)|allb|/@{>}@<3pt>`@{<-}@<3pt>`@{<-}@<3pt>`@{>}@<3pt>/<700,500>[{\bf H} `A `{\bf S} `{\bf V};```]

\efig
$$
$ {\bf H} $ (resp.\ $ {\bf V} $) is the category of arrows and $2$-cells of $ {\cal H} $ (resp.\ ${\cal V}$). $ {\bf S} $ is the category whose objects are squares with morphisms described below.

Given a double bicategory ($ A, {\cal H}, {\cal V}, {\cal S}, \ldots $) we construct an intercategory $ {\ic D} $ as follows.

\noindent (1) The objects are the elements of $ A $.

\noindent (2) The transversal arrows are identities.

\noindent (3) The horizontal (vertical) arrows are the arrows of $ {\cal H} $ (resp.\ $ {\cal V} $).

\noindent (4) The horizontal (vertical) cells are the $2$-cells of $ {\cal H} $ (resp.\ $ {\cal V} $).

\noindent (5) The basic cells are the elements of $ {\cal S} $.

\noindent (6) There is a single cube with boundary as below
$$
\bfig\scalefactor{.7}

\square(0,300)/>`@{>}|{\bb}`@{>}|{\bb}`>/[a`a'`\ov{a}`\ov{a'};```]

\square(300,0)/``@{>}|{\bb}`>/[`a'`\ov{a}`\ov{a'};```]

\morphism(500,800)/=/<300,-300>[a'`a';]

\morphism(0,300)|b|/=/<300,-300>[\ov{a}`\ov{a};]

\morphism(500,300)/=/<300,-300>[\ov{a'}`\ov{a'};]

\place(250,550)[\scriptstyle \sigma]

\place(400,150)[\scriptstyle \ov{\alpha}]

\place(650,400)[\scriptstyle \ov{\beta'}]

\square(1400,300)/>`@{>}|{\bb}``/[a `a'`\ov{a}`;```]

\square(1700,0)/>`@{>}|{\bb}`@{>}|{\bb}`>/[a `a' `\ov{a} `\ov{a'};```]

\morphism(1400,800)|b|/=/<300,-300>[a `a;]

\morphism(1900,800)/=/<300,-300>[a'`a';]

\morphism(1400,300)|b|/=/<300,-300>[\ov{a}`\ov{a};]

\place(1950,250)[\scriptstyle \sigma']

\place(1550,400)[\scriptstyle \beta]

\place(1800,650)[\scriptstyle \alpha]

\efig
$$
if
$$
(\sigma *_H \beta')\star_V \ov{\alpha} = \alpha *_V (\beta *_H \sigma'),
$$
otherwise there are none.

This last condition tells us what the morphisms of $ {\bf S} $ are: a morphism $ \sigma \to \sigma' $ is a quadruple ($ \alpha, \beta, \ov{\alpha}, \beta'$) as above.

The interchangers $ \chi $, $ \delta $, $ \mu $, $ \tau $ are all identities.

Apart from the fact that transversal arrows are all identities, there is a more important special feature of intercategories $ {\ic D} $ arising in this way:
$$
\bfig\scalefactor{.7}

\Atriangle/>`>`/[{\bf S} ` {\bf V} \times {\bf H} ` {\bf V} \times {\bf H};(\partial_1, \partial_1) ` (\partial_0, \partial_0)`]

\efig
$$
is a discrete bifibration. Let's call this property $ (*) $. It implies, in particular, that every horizontal and every vertical cell has a basic companion and conjoint. It also implies that the interchangers are identities.

\begin{theorem} There is a natural correspondence between double bicategories and intercategories satisfying $(*) $ and whose transversal arrows are identities.

\end{theorem}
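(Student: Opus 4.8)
The plan is to exhibit both constructions explicitly and show they are mutually inverse. The passage from a double bicategory to an intercategory is essentially the one already sketched in items (1)--(6) above, so the first task is to confirm that those data genuinely satisfy the intercategory axioms of \cite{ICI}. Since the transversal arrows are all identities, the four transversal categories are trivial and the only compositions to verify are the weak horizontal and vertical ones together with the three cube compositions. I would check that horizontal (resp.\ vertical) composition of basic cells is the horizontal (resp.\ vertical) pasting of squares, that the weak associativity and unit constraints are precisely the structural isomorphisms of $\mathcal{H}$ (resp.\ $\mathcal{V}$) read as globular cells, and that the single-cube condition (6) makes cube composition well defined. Because the interchange of squares holds on the nose, the conditions (21)--(32) of \cite{ICI} collapse to the bicategory coherence of $\mathcal{H}$ and $\mathcal{V}$ together with strict interchange, forcing the interchangers $\chi,\delta,\mu,\tau$ to be identities; property $(*)$ then follows at once from the description of $\mathbf{S}$-morphisms as quadruples $(\alpha,\beta,\bar\alpha,\beta')$.

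For the reverse passage, start from an intercategory $ {\ic A} $ with identity transversal arrows satisfying $(*)$ and build the two bicategories first. Since the transversal arrows are identities, every horizontal cell is globular, and the weak double category of horizontal cells collapses to a bicategory $\mathcal{H}$: its $1$-cells are the horizontal arrows, its $2$-cells the horizontal cells, vertical composition of $2$-cells is (strict) transversal composition, and the associativity and unit constraints of horizontal composition --- which are special cubes with transversal-identity faces --- supply the associator and unitors. Dually one obtains $\mathcal{V}$ from the vertical arrows and vertical cells, and both share the object class of $ {\ic A} $. The squares $\mathcal{S}$ are taken to be the basic cells, whose boundaries are exactly a horizontal/vertical frame.

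The crucial use of $(*)$ is in recovering the four actions of $2$-cells on squares. Given a basic cell $\sigma$ and a single boundary $2$-cell, say $\alpha$ on the top edge, I would take the other three boundary $2$-cells to be identities and invoke the discrete (op)fibration to lift to a \emph{unique} cube; its opposite face is by definition the acted-on square $\alpha *_V \sigma$, and similarly for the remaining three actions. Uniqueness of lifts is what forces the four actions to commute strictly, and it identifies the horizontal and vertical pasting of squares as associative and unitary up to the constraints of $\mathcal{H}$ and $\mathcal{V}$ already built; the strict interchange of squares is then just the statement that the interchangers of $ {\ic A} $ are identities. To close the loop I would show the two constructions are mutually inverse, the key point being that, under $(*)$, the condition (6) defining a cube of $ {\ic D} $ is exactly equivalent to the square equation $(\sigma *_H \beta')\star_V \bar\alpha = \alpha *_V(\beta *_H \sigma')$, so cubes and commuting boundary configurations determine one another.

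The hard part will be the reverse direction, and specifically the book-keeping around $(*)$: one must show that the bifibration not only produces the four actions but that all the coherence demanded by Verity's definition --- commutation of the actions, associativity and unitality of pasting modulo the bicategory constraints, and strict interchange --- is \emph{forced} by the uniqueness of cube lifts, with no further choices available. Establishing this equivalence between ``a cube of $ {\ic A} $'' and ``a commuting quintet-style square equation,'' and checking it is natural in the evident morphisms so that the correspondence deserves the name, is where the real content lies; the remaining verifications are the routine translations indicated above.
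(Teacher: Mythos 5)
Your proposal is correct and takes essentially the approach the paper intends: your forward passage is exactly the paper's construction (1)--(6), and your reconstruction of Verity's four actions via unique cube lifts is precisely what the discrete bifibration property $(*)$ was introduced to encode, with commutation of the actions and strict interchange forced by uniqueness of lifts just as you say. The paper actually states the theorem without any written proof, so your two directions together with the mutual-inverse check (identifying the cube condition (6) with the square equation) are the natural completion of the sketch it gives.
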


\subsection{Double categories as intercategories}\label{Verity.2}

\ 

One thing this example gives us is a different way of looking at intercategories. They are a weakening of double categories so as to allow both horizontal and vertical composition to be bicategorical in nature. And thus it gives us a preferred way to consider a double category as an intercategory.

Let $ {\mathbb A} $ be a weak double category. Horizontal composition is strictly associative and unitary whereas vertical composition is so only up to coherent isomorphism. This is reflected in the fact that morphisms of double categories can be lax, colax, strong or strict in the vertical direction but are always required to be strict in the horizontal direction. To encode this in a Verity double bicategory we take $ {\cal H} $ to be the locally discrete bicategory (i.e.\ just the category) of objects and horizontal arrows of $ {\mathbb A} $. For $ {\cal V} $ we take the bicategory of objects, vertical arrows and special (= globular) cells of $ {\mathbb A} $. The class of squares $ {\cal S} $ is the class of all double cells of $ {\mathbb A} $. We now turn this into an intercategory $ {\ic I}({\mathbb A}) $ thus placing it in the same environment (i.\ e.\ the triple category ${\ic ICat}$) as the other examples. Thus we have for $ {\ic I}({\mathbb A}) $

\noindent -- objects are those of $ {\mathbb A} $

\noindent -- transversal arrows are identities

\noindent -- horizontal arrows are those of $ {\mathbb A} $

\noindent -- vertical arrows are those of $ {\mathbb A} $

\noindent -- horizontal cells are identities

\noindent -- vertical cells are special cells of $ {\mathbb A} $

\noindent -- basic cells are the double cells of $ {\mathbb A} $

\noindent -- cubes are commutative cylinders $ \phi' \alpha = \beta \phi $

$$
\bfig\scalefactor{.7}

\square(0,300)/>`@{>}|{\bb}``/[A ` B `\ov{A} `;f ` v ` `]

\square(300,0)|brrb|/>`@{>}|{\bb}`@{>}|{\bb}`>/[A ` B `\ov{A} `\ov{B}; f` v' `w' `g]

\morphism(0,800)/=/<300,-300>[A` A;]

\morphism(500,800)/=/<300,-300>[B ` B;]

\morphism(0,300)|b|/=/<300,-300>[\ov{A}`\ov{A};]

\place(550,250)[\scriptstyle \phi']

\place(150,400)[\scriptstyle \alpha]

\place(330,630)[\scriptstyle \phi]

\place(570,600)[\scriptstyle \beta]

\morphism(500,790)/-/<0,-250>[`;]

\efig
$$

The three kinds of morphisms of intercategory $ {\ic I}({\mathbb A}) \to {\ic I}({\mathbb B}) $, lax-lax, colax-lax, colax-colax, correspond respectively to lax, lax, colax functors $ {\mathbb A} \to {\mathbb B} $.

Of course there are other ways of considering a double category as an intercategory. The two examples mentioned at the beginning of the section, quintets in a bicategory and the transpose of a weak double category, require the horizontal cells to be horizontally special as well. We would take the $ {\cal H} $ to be the bicategory of objects, horizontal arrows and horizontally special cells of $ {\mathbb A} $, with $ {\cal V} $ and ${\cal S} $ the same as above. The new intercategory $ {\ic I'} ({\mathbb A}) $ will have cubes that involve six cells
$$
\bfig\scalefactor{.7}

\square(0,300)/>`@{>}|{\bb}`@{>}|{\bb}`>/[A ` B `\ov{A} `\ov{B}; f` v`v' `g]

\square(300,0)/``@{>}|{\bb}`>/[` B `\ov{A}`\ov{B};` `w' `g']

\morphism(500,800)/=/<300,-300>[B ` B;]

\morphism(0,300)|b|/=/<300,-300>[\ov{A}`\ov{A};]

\morphism(500,300)/=/<300,-300>[\ov{B} `\ov{B};]

\place(250,550)[\scriptstyle \phi]

\place(400,150)[\scriptstyle \ov{\theta}]

\place(650,400)[\scriptstyle \beta]

\square(1500,300)/>`@{>}|{\bb}``/[A ` B `\ov{A} `;f ` v ` `]

\square(1800,0)/>`@{>}|{\bb}`@{>}|{\bb}`>/[A ` B `\ov{A} `\ov{B}; f' ` v' `w' `g']

\morphism(1500,800)|b|/=/<300,-300>[A` A;]

\morphism(2000,800)/=/<300,-300>[B ` B;]

\morphism(1500,300)|b|/=/<300,-300>[\ov{A}`\ov{A};]

\place(2050,250)[\scriptstyle \phi']

\place(1650,400)[\scriptstyle \alpha]

\place(1900,650)[\scriptstyle \theta]

\efig
$$
making the cube commute, i.e.
$$
\bfig\scalefactor{.7}

\square/``@{>}|{\bb}`=/[`\ov{A} `\ov{A}`\ov{A};```]

\square(0,500)/=``@{>}|{\bb}`/[A`A``\ov{A};```]

\square(0,0)/=`@{>}|{\bb}``=/<500,1000>[A`A`\ov{A}`\ov{A};`v``]

\place(250,500)[\scriptstyle \lambda^{-1}]

\square(500,0)/>`@{>}|{\bb}`@{>}|{\bb}`>/[\ov{A}`\ov{B}`\ov{A}`\ov{B};`\Id`\Id`]

\place(750,250)[\scriptstyle \theta]

\square(500,500)/>`@{>}|{\bb}`@{>}|{\bb}`>/[A`B`\ov{A}`\ov{B};`v`v'`]

\place(750,750)[\scriptstyle \phi]

\square(1000,0)/=``@{>}|{\bb}`=/[\ov{B}`\ov{B}`\ov{B}`\ov{B};``\Id`]

\place(1250,250)[\scriptstyle 1]

\square(1000,500)/=``@{>}|{\bb}`/[B`B`\ov{B}`\ov{B};``w'`]

\place(1250,750)[\scriptstyle \beta]

\square(1500,0)/```=/[\ov{B}``\ov{B}`\ov{B};```]

\square(1500,500)/=`@{>}|{\bb}``/[B`B`\ov{B}`;```]

\square(1500,0)/=``@{>}|{\bb}`=/<500,1000>[B`B`\ov{B}`\ov{B};``w'`]

\place(1750,500)[\scriptstyle \lambda]

\place(2200,500)[=]

\square(2400,0)/``@{>}|{\bb}`=/[`A`\ov{A}`\ov{A};``v`]

\square(2400,500)/=``@{>}|{\bb}`/[A`A``A;``\Id`]

\square(2400,0)/=`@{>}|{\bb}``=/<500,1000>[A`A`\ov{A}`\ov{A};`v``]

\place(2650,500)[\scriptstyle \rho^{-1}]

\square(2900,0)/=``@{>}|{\bb}`=/[A`A`\ov{A}`\ov{A};``v'`]

\place(3150,250)[\scriptstyle \alpha]

\square(2900,500)/=``@{>}|{\bb}`=/[A`A`A`A;``\Id`]

\place(3150,750)[\scriptstyle 1]

\square(3400,0)/>``@{>}|{\bb}`>/[A`B`\ov{A}`\ov{B};``w'`]

\place(3650,250)[\scriptstyle \phi']

\square(3400,500)/>``@{>}|{\bb}`/[A`B`A`B;``\Id`]

\place(3650,750)[\scriptstyle \theta]

\square(3900,0)/`@{>}|{\bb}``=/[B``\ov{B}`\ov{B};```]

\square(3900,500)/=```/[B`B`\ov{B}`;```]

\square(3900,0)/=``>`=/<500,1000>[B`B`\ov{B}`\ov{B};``w'`]

\place(4150,500)[\scriptstyle \rho]

\efig
$$

We will not check the tedious though straightforward details showing that this is indeed an intercategory.

The above example suggests the following generalization which does not, however, arise as a Verity double bicategory. From a weak double category $ {\mathbb A} $ we construct an intercategory $ {\ic I''}({\mathbb A}) $ which is like $ {\ic I'}({\mathbb A}) $ except that we allow its transversal morphisms to be horizontal arrows of $ {\mathbb A} $. So a general cube will look like
$$
\bfig\scalefactor{.7}

\square(0,300)/>`@{>}|{\bb}`@{>}|{\bb}`>/[A ` B `\ov{A} `\ov{B}; f` v`v' `g]

\square(300,0)/``@{>}|{\bb}`>/[` B' `\ov{A'}`\ov{B'};` `w' `]

\morphism(500,800)|r|/>/<300,-300>[B ` B';t]

\morphism(0,300)|b|/>/<300,-300>[\ov{A}`\ov{A'};\ov{g}]

\morphism(500,300)|a|/>/<300,-300>[\ov{B} `\ov{B'};\ov{t}]

\place(250,550)[\scriptstyle \phi]

\place(400,150)[\scriptstyle \ov{\theta}]

\place(650,400)[\scriptstyle \beta]

\square(1500,300)/>`@{>}|{\bb}``/[A ` B `\ov{A} `;f ` v ` `]

\square(1800,0)|blrb|/>`@{>}|{\bb}`@{>}|{\bb}`>/[A' ` B' `\ov{A'} `\ov{B'}; f' ` ` `g']

\morphism(1500,800)|a|/>/<300,-300>[A` A';s]

\morphism(2000,800)|a|/>/<300,-300>[B ` B';t]

\morphism(1500,300)|b|/>/<300,-300>[\ov{A}`\ov{A'};\ov{g}]

\place(2050,250)[\scriptstyle \phi']

\place(1650,400)[\scriptstyle \alpha]

\place(1900,650)[\scriptstyle \theta]

\efig
$$
such that

$$
\bfig\scalefactor{.7}

\square/``@{>}|{\bb}`=/[`\ov{A} `\ov{A}`\ov{A};``\Id`]

\square(0,500)/=``@{>}|{\bb}`/[A`A``\ov{A};``v`]

\square(0,0)/=`@{>}|{\bb}``=/<500,1000>[A`A`\ov{A}`\ov{A};`v``]

\place(250,500)[\scriptstyle \lambda^{-1}]

\square(500,0)/>`@{>}|{\bb}``>/[\ov{A}`\ov{B}`\ov{A}`\ov{A'};```]


\square(500,500)/>`@{>}|{\bb}`@{>}|{\bb}`>/[A`B`\ov{A}`\ov{B};```]

\place(750,750)[\scriptstyle \phi]

\square(1000,0)/>``@{>}|{\bb}`>/[\ov{B}`\ov{B'}`\ov{A'}`\ov{B'};``\Id`]

\place(1000,250)[\scriptstyle \ov{\theta}]


\square(1000,500)/>`@{>}|{\bb}`@{>}|{\bb}`>/[B`B'`\ov{B}`\ov{B'};``w'`]

\place(1250,750)[\scriptstyle \beta]

\square(1500,0)/```=/[\ov{B'}``\ov{B'}`\ov{B'};```]

\square(1500,500)/=`@{>}|{\bb}``/[B'`B'`\ov{B'}`;```]

\square(1500,0)/=``@{>}|{\bb}`=/<500,1000>[B'`B'`\ov{B'}`\ov{B'};``w'`]

\place(1750,500)[\scriptstyle \lambda]

\place(2200,500)[=]

\square(2400,0)/``@{>}|{\bb}`=/[`A`\ov{A}`\ov{A};``v`]

\square(2400,500)/=``@{>}|{\bb}`/[A`A``A;``\Id`]

\square(2400,0)/=`@{>}|{\bb}``=/<500,1000>[A`A`\ov{A}`\ov{A};`v``]

\place(2650,500)[\scriptstyle \rho^{-1}]

\square(2900,0)/>`@{>}|{\bb}`@{>}|{\bb}`>/[A`A'`\ov{A}`\ov{A'};```]

\place(3150,250)[\scriptstyle \alpha]

\square(2900,500)/>`@{>}``>/[A`B`A`A';```]


\square(3400,0)/>``@{>}|{\bb}`>/[A'`B'`\ov{A'}`\ov{B'};``w'`]

\place(3650,250)[\scriptstyle \phi']

\square(3400,500)/>``@{>}|{\bb}`>/[B`B'`A'`B';``\Id`]

\place(3400,750)[\scriptstyle \theta]

\square(3900,0)/`@{>}|{\bb}``=/[B'``\ov{B'}`\ov{B'};```]

\square(3900,500)/=`@{>}|{\bb}``/[B'`B'`B'`;```]

\square(3900,0)/=``@{>}|{\bb}`=/<500,1000>[B'`B'`\ov{B'}`\ov{B'};``w'`]

\place(4150,500)[\scriptstyle \rho]

\efig
$$

\subsection{Quintets in a double category}\label{Verity.3}

\ 

We end this section with a somewhat dual construction to the previous one, quintets in a double category. A weak double category may be thought of as a bicategory (vertically) with some extra arrows (horizontal) which serve to rigidify it in a sense. The quintet construction for bicategories mentioned above can be performed on an arbitrary weak double category.

Let $ {\mathbb A} $ be a weak double category. The intercategory of quintets, $ {\ic Q}({\mathbb A}) $, has the following:

\noindent -- objects, those of $ {\mathbb A} $

\noindent -- transversal arrows, the horizontal arrows of $ {\mathbb A} $

\noindent --  horizontal and vertical arrows, the vertical arrows of $ {\mathbb A} $

\noindent -- horizontal and vertical cells, the double cells of $ {\mathbb A} $

\noindent -- basic cells
$$
\bfig\scalefactor{.6}

\square/@{>}|{\bb}`@{>}|{\bb}`@{>}|{\bb}`@{>}|{\bb}/[A`B`\ov{A}`\ov{B};u`v`v'`\ov{u}]

\place(250,250)[\scriptstyle \phi]

\efig
$$
are quintets, i.e.\ special cells of $ {\mathbb A} $
$$
\bfig\scalefactor{.55}

\square/`@{>}|{\bb}`@{>}|{\bb}`=/[B`\ov{A}`\ov{B}`\ov{B};`v'`\ov{u}`]

\square(0,500)/=`@{>}|{\bb}`@{>}|{\bb}`/[A`A`B`\ov{A};`u`v`]

\place(250,500)[\scriptstyle \phi]

\efig
$$

\noindent -- cubes consist of cells as follows

$$
\bfig\scalefactor{.7}

\square(0,300)/@{>}|{\bb}`@{>}|{\bb}`@{>}|{\bb}`@{>}|{\bb}/[A ` B `\ov{A} `\ov{B}; u` v`w `\ov{u}]

\square(300,0)/``@{>}|{\bb}`@{>}|{\bb}/[` B' `\ov{A'}`\ov{B'};` `w' `\ov{u'}]

\morphism(500,800)|r|/>/<300,-300>[B ` B';g]

\morphism(0,300)|b|/>/<300,-300>[\ov{A}`\ov{A'};\ov{f}]

\morphism(500,300)|a|/>/<300,-300>[\ov{B} `\ov{B'};\ov{g}]

\place(250,550)[\scriptstyle \phi]

\place(400,150)[\scriptstyle \ov{\theta}]

\place(650,400)[\scriptstyle \beta]

\square(1500,300)/@{>}|{\bb}`@{>}|{\bb}``/[A ` B `\ov{A} `;u ` v ` `]

\square(1800,0)|arrb|/@{>}|{\bb}`@{>}|{\bb}`@{>}|{\bb}`@{>}|{\bb}/[A' ` B' `\ov{A'} `\ov{B'}; u' `v' `w' `\ov{u'}]

\morphism(1500,800)|a|/>/<300,-300>[A` A';f]

\morphism(2000,800)|a|/>/<300,-300>[B ` B';g]

\morphism(1500,300)|b|/>/<300,-300>[\ov{A}`\ov{A'};\ov{f}]

\place(2050,250)[\scriptstyle \phi']

\place(1650,400)[\scriptstyle \alpha]

\place(1900,650)[\scriptstyle \theta]

\efig
$$
such that

$$
\bfig\scalefactor{.7}

\square/>`@{>}|{\bb}`@{>}|{\bb}`>/[B`B'`\ov{B}`\ov{B'};`w`w'`\ov{g}]

\place(250,250)[\scriptstyle \beta]

\square(0,500)/>`@{>}|{\bb}`@{>}|{\bb}`>/[A`A'`B`B';f'`u`u'`g]

\place(250,750)[\scriptstyle \theta]

\square(500,0)/``@{>}|{\bb}`=/[B'`\ov{A'}`\ov{B'}`\ov{B'};``\ov{u'}`]

\square(500,500)/=``@{>}|{\bb}`/[A'`A'`B'`\ov{A'};``v'`]

\place(750,500)[\scriptstyle \phi']

\place(1300,500)[=]

\square(1600,0)/`@{>}|{\bb}`@{>}|{\bb}`=/[B`\ov{A}`\ov{B}`\ov{B};`w`u'`]

\square(1600,500)/=`@{>}|{\bb}`@{>}|{\bb}`/[A`A`B`\ov{A};`u`v`]

\place(1850,500)[\scriptstyle \phi]

\square(2100,0)/>`@{>}|{\bb}`@{>}|{\bb}`>/[\ov{A}`\ov{A'}`\ov{B}`\ov{B'};``\ov{u'}`\ov{g}]

\place(2350,250)[\scriptstyle \ov{\theta}]

\square(2100,500)/>``@{>}|{\bb}`>/[A`A'`\ov{A}`\ov{A'};f``v'`\ov{f}]

\place(2350,750)[\scriptstyle \alpha]

\efig
$$
Again we omit the straightforward verifications.

\subsection{Morphisms of double categories}\label{Verity.4}

\ 

In the preceding sections we gave four ways to consider a double category $ {\mathbb A} $ as an intercategory: $ {\ic I}({\mathbb A}) $, $ {\ic I'}({\mathbb A}) $, $ {\ic I''} ({\mathbb A}) $, and $ {\ic Q}({\mathbb A}) $. The first two come from considering $ {\mathbb A} $ as a special Verity double bicategory, whereas the other two are natural extensions in the intercategory context. There are other possibilities: we can switch transversal and horizontal in the first two cases and get a different embedding, or we can switch the horizontal and the vertical in all four. We could also restrict the horizontal or vertical cells to special isomorphisms. 

The referee pointed out this gives us a way of specifying how lax or colax we want to allow our morphisms to be. As mentioned in Section 4.2 above, the lax-lax, colax-lax, and colax-colax intercategory morphisms $ {\ic I} ({\mathbb A}) \to {\ic I} ({\mathbb B}) $ correspond exactly to lax, lax, and colax double functors $ {\mathbb A} \to {\mathbb B} $, respectively.

It is interesting that, while a direct definition of a double category weak in both directions doesn't work, there is no problem in defining morphisms that are weak in both directions, and these have already proved useful (see definition 6.1 of \cite{Shl2011} where they are called {\em double pseudofunctors}). In fact, one can just as easily define double functors which are lax or colax in either direction independently. Perhaps the only question is whether there should be some coherence between the horizontal and vertical structural morphisms.

Because $ {\ic I'}({\mathbb A}) $ allows non trivial horizontal cells as well as the vertical ones, we see that lax-lax, colax-lax, and colax-colax intercategory morphisms  $ {\ic I'}({\mathbb A}) \to {\ic I'} ({\mathbb B}) $ correspond to double functors $ {\mathbb A} \to {\mathbb B} $ that are lax-lax, colax-lax (colax on the horizontal arrows and lax on the vertical), colax-colax respectively. The coherence conditions are given explicitly for the lax-lax case in Section 6 of \cite{ICI}. (1) and (2) give vertically special cells expressing laxity for vertical arrows, and (5), (6) and (7) are the usual laxity coherence conditions. (3), (4, (8), (9) and (10) are the corresponding ones for horizontal laxity. (11)-(14) are the ones relating the horizontal and vertical laxity and are automatically satisfied. This is because a cube is simply a commutativity condition, so there is at most one for a given boundary. Thus any diagram of cubes will commute if the boundaries agree, and in the present case this follows by transversal functoriality. So there is no interaction between the horizontal and vertical laxity.

In the same way, the colax-lax double functors have colaxity special cells for horizontal composition satisfying the usual coherence conditions, and laxity special cells for vertical composition with no interaction between the two.

We can also define colax-colax double functors and there are double cells relating these three kinds of double functors in pairs, producing a strict triple category of weak double categories $ {\ic Doub} $, a full (at all levels) sub triple category of $ \ICat $ (Theorem 6.3 in \cite{ICI}).

We could equally well define lax-colax double functors, which weren't mentioned in the above discussion. Because the interchangers in $ {\ic I'} ({\mathbb A}) $ and $ {\ic I'} ({\mathbb B}) $ are all isomorphisms (identities, in fact) we can transpose the horizontal and vertical while inverting the interchangers to get new transposed intercategories. Then the three types of double functor described above give lax-lax, lax-colax, colax-colax double functors, with their double cells too, giving another triple category. But the two don't mix: any attempt to define double cells bounded by colax-lax and lax-colax double functors is doomed to failure!

An example of a different nature is provided by a double category $ {\mathbb A} $ in which each horizontal arrow $ f $ has a companion $ f_* $. Let $ {\ic Q'} ({\mathbb A}) $ be the intercategory of coquintets in $ {\mathbb A} $, i.e. the transpose of $ {\ic Q}({\mathbb A}) $. A basic cell in $ {\ic Q'} ({\mathbb A}) $
$$
\bfig\scalefactor{.6}

\square/@{>}|{\bb}`@{>}|{\bb}`@{>}|{\bb}`@{>}|{\bb}/[A ` B ` \ov{A} `\ov{B};u `v `v'`\ov{u}]

\place(250,250)[\scriptstyle \psi]

\efig
$$
is a special cell
$$
\bfig\scalefactor{.6}

\square/`@{>}|{\bb}`@{>}|{\bb}`=/[\ov{A} ` B `\ov{B} `\ov{B};`\ov{u} `\ov{v'} `]

\square(0,500)/=`@{>}|{\bb}`@{>}|{\bb}`/[A `A `\ov{A} `B;`v `u`]

\place(250,500)[\scriptstyle \psi]

\efig
$$
Then we get a pseudo-strict morphism of intercategories
$$
(\ )_* : {\ic I''}({\mathbb A}) \to {\ic Q'}({\mathbb A})
$$
taking a basic cell $ \phi $ to $ \phi_*$
$$
\bfig\scalefactor{.6}

\square(-200,500)/>`@{>}|{\bb}`@{>}|{\bb}`>/[A `B `\ov{A} `\ov{B};f `v `v'`\ov{f}]

\place(50,750)[\scriptstyle \phi]

\place(650,750)[\longmapsto]

\square(1000,500)/@{>}|{\bb}`@{>}|{\bb}`@{>}|{\bb}`@{>}|{\bb}/[A `B `\ov{A} `\ov{B};f_* `v `v'`\ov{f_*}]

\place(1250,750)[\scriptstyle \phi_*]

\place(1800,750)[=]

\square(2200,0)/`@{>}|{\bb}`=`=/[\ov{A} `\ov{B} `\ov{B} `\ov{B};`\ov{f_*} ``]

\square(2200,500)/>`@{>}|{\bb}`@{>}|{\bb}`>/[A `B `\ov{A} `\ov{B};f`v`v'`\ov{f}]

\square(2200,1000)/=`=`@{>}|{\bb}`/[A `A `A `B;``f_*`]

\place(2450,750)[\scriptstyle \phi]

\place(2450,250)[\lrcorner]

\place(2450,1250)[\ulcorner]

\efig
$$
where $ \ulcorner $ and $ \lrcorner $ denote the companion binding cells. The morphism $ (\ )_* $ embodies all of the functorial properties of companionship.

\section{True Gray categories}\label{Gray}

\subsection{Gray's original tensor}\label{Gray.1}

\ 

Gray categories came into prominence with the work of Gordon, Power and Street \cite{GPS} on tricategories. Whereas every bicategory is biequivalent to a $2$-category, the corresponding result for tricategories, that they be triequivalent to $3$-categories, is false as this would imply, as a special case, that every symmetric monoidal category is equivalent to a strict one. Their coherence result was that every tricategory is triequivalent to one in which everything is strict except for interchange which only holds up to isomorphism, a notion they called ``Gray category''. In fact, they introduced a monoidal structure on the category of $2$-categories which encodes this failure of interchange. The resulting monoidal category they called ${\bf Gray}$, and a Gray category is a category enriched in $ {\bf Gray} $.

As the name suggests, this was strongly influenced by a similar monoidal structure introduced by Gray in \cite{Gray}. His tensor product encodes the possibility that interchange ``hold'' only up to a comparison morphism. It arose, via adjointness, from a natural internal hom on the category of $2$-categories, which we briefly outline.

We consider the category $2\mbox{-}{\bf Cat} $ of $2$-categories and $2$-functors. Between $2$-functors we have various kinds of transformations, of which lax (natural) transformations are an important class. A lax transformation $ t : F \to G $, for $ F, G : {\cal A} \to{\cal B} $ $2$-functors, is given by

\noindent (1)  a ${\cal B}$-morphism $ t A : FA \to GA $ for each object $ A $

\noindent (2) a $2$-cell of $ {\cal B} $
$$
\bfig\scalefactor{.7}

\square[FA  ` GA  ` FA'  `GA';tA   ` Ff  ` Gf `tA']

\morphism(250,350)|r|/=>/<0,-150>[`;\ tf]

\efig
$$
for each arrow $ f : A \to A' $ in ${\cal A} $.
\noindent These satisfy well known conditions \cite{laxtransf}. Between lax transformations, there are modifications $ \mu : t \to u $ given by $2$-cells
$$
\bfig\scalefactor{.9}

\morphism(0,0)|a|/{@{>}@/^1em/}/<500,0>[FA `GA;tA]

\morphism(0,0)|b|/{@{>}@/_1em/}/<500,0>[FA ` GA;uA]

\morphism(230,90)|r|/=>/<0,-150>[`;\mu A]

\efig
$$
again satisfying obvious conditions. In this way we get a $2$-category $\mbox{Fun} ({\cal A}, {\cal B}) $, an internal hom for $2\mbox{-}{\bf Cat}$.

But it doesn't make $2\mbox{-}{\bf Cat} $ cartesian closed because composition
$$
\mbox{Fun} ({\cal A},{\cal B}) \times \mbox{Fun} ({\cal B}, {\cal C}) \to \mbox{Fun} ({\cal A}, {\cal C})
$$
isn't a $2$-functor. Composition of $2$-functors poses no problem. But for lax transformations
$$
\bfig\scalefactor{.9}

\morphism(0,0)|a|/{@{>}@/^1em/}/<500,0>[{\cal A}`{\cal B};F]

\morphism(0,0)|b|/{@{>}@/_1em/}/<500,0>[{\cal A}`{\cal B};G]

\morphism(250,100)|l|/=>/<0,-150>[`;t]

\morphism(500,0)|a|/{@{>}@/^1em/}/<500,0>[{\cal B}`{\cal C};H]

\morphism(500,0)|b|/{@{>}@/_1em/}/<500,0>[{\cal B}`{\cal C};K]

\morphism(750,100)|r|/=>/<0,-150>[`;v]

\efig
$$
we have two possible choices for $ (vt)A : HFA \to KGA$, either the top or bottom composite in
$$
\bfig\scalefactor{.7}

\square<800,500>[HFA ` KFA ` HGA `KGA;vFA `HtA `KtA `vGA]

\morphism(300,350)|r|/=>/<0,-150>[`;v(tA)]

\efig
$$
Each choice extends to a lax transformation via
$$
\bfig\scalefactor{.7}

\square<800,500>[HFA ` KFA `HFA' `KFA';vFA  `HFf `KFf `vFA']

\morphism(350,350)|r|/=>/<0,-150>[`;vFf]

\square(800,0)<800,500>[KFA `KGA `KFA'`KGA';KtA ``KGf`KtA']

\morphism(1150,350)|r|/=>/<0,-150>[`;Ktf]

\efig
$$
and
$$
\bfig\scalefactor{.7}

\square<800,500>[HFA `HGA`HFA'`HGA';HtA `HFf`HGf`HtA']

\morphism(350,350)|r|/=>/<0,-150>[`;Htf]

\square(800,0)<800,500>[HGA ` KGA`HGA'`KGA';vGA ``KGf`vGA']

\morphism(1150,350)|r|/=>/<0,-150>[`;vGf]

\efig
$$
and each of these composites is associative and unitary, and functorial with respect to modifications. But neither satisfies interchange. Whiskering, on the other hand, works well as there is no interchange involved, and the two composites come from that in the standard way. There is furthermore a comparison between the two. Clearly there is a lot of nice structure here and it is a question of organizing it properly. The key to this is Gray's tensor product, obtained from $\mbox{Fun} $ by adjointness.

We would like a $2$-category $ {\cal A} \otimes {\cal B} $ so that there is a $2$-natural bijection
$$
\frac{2\mbox{-functors}\quad  {\cal A} \otimes {\cal B} \to {\cal C}}{2\mbox{-functors}\quad  {\cal B} \to \mbox{Fun}({\cal A}, {\cal C})}
$$
Analyzing what a $2$-functor $ {\cal B} \to \mbox{Fun} ({\cal A}, {\cal C}) $ is, we get what Gray calls a quasi-functor of two variables $ H : {\cal A} \times {\cal B} \to {\cal C}$, i.e.

\noindent (1) a $2$-functor $ H(A,-) : {\cal B} \to {\cal C} $ for every $ A $ in ${\cal A}$;

\noindent (2) a $2$-functor $ H (-,B) : {\cal A} \to {\cal C} $ for every $ B $ in $ {\cal B}$;

\noindent (3) $ H(A,-)(B) = H(-,B)(A) $, written $ H(A,B) $;

\noindent (4) for every $ f : A \to A' $ and $ g : B \to B' $ a $2$-cell
$$
\bfig\scalefactor{.7}

\square<1000,500>[H(A,B)  `H(A',B)  `H(A,B')  `H(A',B');H(f,B)   `H(A,g)  `H(A',g) `H(f,B')]

\morphism(400,200)|a|/=>/<200,0>[`;h(f,g)]

\efig
$$
satisfying compatibility conditions for composition of the $f$'s (and the $g$'s). 

The term {\em cubical functor} is used in \cite{GPS} and \cite{GaGu} for quasi-functors of two variables in which the $h$'s are isomorphisms.

It is easy to imagine what $ {\cal A} \otimes {\cal B} $ is. It is the $2$-category with pairs $ (A,B) $, $A$ in $ {\cal A} $, $B$ in $ {\cal B} $, as objects, arrows generated by $ (f,B) : (A,B) \to (A',B) $ and $ (A,g) : (A,B) \to (A, B') $ subject to the equations
$$
(f',B)(f,B) = (f'f,B)
$$
$$
(A,g') (A,g) = (A, g'g)
$$
$$
(1_A,B)=1_{(A,B)} = (A,1_B).
$$
The $2$-cells are generated by those of $ {\cal A} $, those of $ {\cal B} $, and formal cells
$$
\bfig\scalefactor{.7}

\square<900,500>[(A,B)  `(A',B)  `(A,B')   `(A',B');(f,B)  `(A,g)  `(A',g)  `(f,B')]

\morphism(350,200)|a|/=>/<200,0>[`;\gamma(f,g)]

\efig
$$
subject to the expected equations. See \cite{Gray} for a more detailed description. It is of course complicated but just knowing it exists and that it gives a monoidal structure on $2\mbox{-}{\bf Cat} $ is enough. It is easier to use its universal property as classifying quasi-functors of two variables. This monoidal structure is biclosed, with $ \mbox{Fun}({\cal A},-)$ being the right adjoint to $ {\cal A} \otimes (-) $. The other adjoint is $ \mbox{Fun}^*({\cal B},-) $ given by
$$
\mbox{Fun}^* ({\cal B},{\cal C}) = \mbox{Fun}({\cal B}^{co}, {\cal C}^{co})^{co}.
$$
$\mbox{Fun}^* ({\cal B}, {\cal C}) $ has $2$-functors as objects, colax transformations as arrows, and modifications as $2$-cells.

\begin{definition} We call a category enriched in $2\mbox{-}{\bf Cat}$ with this tensor a {\em true Gray category}.

\end{definition}

Thus a true Gray category has objects, arrows ($1$-cells), $2$-cells and $3$-cells with domains and codomains like for $3$-categories. There is a strictly associative and unitary composition of arrows. $2$-cells and $3$-cells compose well inside the hom $2$-categories, but there is no horizontal composition of $2$-cells. Only whiskering on both sides by arrows, related by $3$-cells as above. This last aspect suits our purposes well as a measure of the failure of interchange. But we do need composition of $2$-cells and $3$-cells across the hom $2$-categories.

As hinted at above, there are two related ways of getting a composition, a lax and a colax one. The roots of this lie in the following result, which is essentially Gray's I.4.8 \cite{Gray}, the idea for which he credits Mac~Lane.

\begin{proposition} \label{GrayMacLane} There is a canonical bijection between the following three notions:

\noindent (a) Quasi-functors of two variables $ H : {\cal A} \times {\cal B} \to {\cal C}$, 

\noindent (b) lax functors $H^\wedge : {\cal A} \times {\cal B} \to {\cal C} $ for which the laxity morphisms

\noindent \quad (i)  $ H^\wedge (f,1) H^\wedge (f',g') \to H^\wedge (ff',g') $

\noindent \quad (ii) $ H^\wedge (f,g) H^\wedge (1,g') \to H^\wedge (f,gg') $

\noindent \quad (iii) $ 1_{H^\wedge (A,B)} \to H^\wedge(1_A,1_B) $

\noindent \phantom{(b)}are identities,

\noindent (c) colax functors $ H^\vee : {\cal A} \times {\cal B} \to {\cal C} $ for which the colaxity morphisms

\noindent \quad (i) $ H^\vee (f',gg') \to H^\vee (1,g) H^\vee (f',g') $

\noindent \quad (ii) $ H^\vee (ff',g) \to H^\vee (f,g) H^\vee (f',1)$

\noindent \quad (iii) $ H^\vee (1_A,1_B) \to 1_{H^\vee (A,B)} $

\noindent \phantom{(b)}are identities.

\noindent Furthermore, Gray's quasi-natural transformations $ H \to K $ are in bijection with lax transformations $ H^\wedge \to K^\wedge $ and also with lax transformations $ H^\vee \to K^\vee $. A similar statement applies to modifications.

\end{proposition}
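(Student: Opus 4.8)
The plan is to exhibit the correspondence by matching data first and matching coherence second, exploiting the single structural fact that in the product $2$-category $ {\cal A} \times {\cal B} $ every arrow $ (f,g) : (A,B) \to (A',B') $ admits two \emph{strict} factorizations, $ (f,g) = (f,1)(1,g) = (1,g)(f,1) $, which agree on the nose because composition in a product is componentwise. This is what makes the interchange data collapse into a single comparison cell.

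For the passage from (a) to (b), given a Gray functor of two variables $ H $ I would set $ H^\wedge(A,B) = H(A,B) $ on objects and $ H^\wedge(f,g) = H(f,B')\circ H(A,g) $ on arrows, so that $ H^\wedge(f,1) = H(f,-) $ and $ H^\wedge(1,g) = H(A,-) $ come from the two $2$-functors in (1),(2), with the action on a pair of $2$-cells $ (\alpha,\beta) $ induced by these same $2$-functors. The unit laxity is declared an identity, which is condition (iii), and the constraints (i),(ii) are forced to be identities; this is consistent precisely because $ H(-,B) $ and $ H(A,-) $ are $2$-functors and because of the chosen factorization. The \emph{only} remaining nontrivial laxity constraint is the one comparing the two orders of a mixed composite, and I would take it to be exactly the cell $ h(f,g) $ of (4).

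The heart of the matter is the coherence. The claim is that the lax-functor axioms for $ H^\wedge $ — associativity of the laxity constraints over all composable triples in $ {\cal A} \times {\cal B} $, together with the unit axioms — reduce, once (i),(ii),(iii) are imposed as identities, to exactly the compatibility conditions on the family $ h(f,g) $ in the definition of Gray functor of two variables (functoriality in $ f $, functoriality in $ g $, and their interaction). I would verify this by decomposing an arbitrary triple into its pure-$ f $, pure-$ g $, and mixed parts and checking that each instance of the associativity axiom either collapses to a triviality (when no genuine interchange occurs) or is precisely one of Gray's cubical-functor equations. This bookkeeping is where all the work sits and is the main obstacle: one must be scrupulous that the forced-identity constraints are mutually compatible and that no axiom is left unaccounted for. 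Running the construction backwards — reading $ h(f,g) $ off the unique nontrivial laxity constraint of a lax functor satisfying (i),(ii),(iii) — gives the inverse assignment, and the two are visibly mutually inverse on the matched data. For the colax case I would obtain (a)$\leftrightarrow$(c) by the evident $ {}^{co} $-duality: a colax functor $ H^\vee $ is a lax functor into $ {\cal C}^{co} $ after reversing $2$-cells, and the comparison now points the other way, giving $ H^\vee(f,g) = H(A',g)\circ H(f,B) $ with colaxity constraints (i),(ii),(iii) identities; composing the two bijections yields the canonical passage between $ H^\wedge $ and $ H^\vee $ fixing the underlying $ H $.

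Finally, for the transformations and modifications, a Gray (quasi-natural) transformation $ H \to K $ consists of component $1$-cells together with naturality $2$-cells subject to coherence, and I would check that, under the data-matching above, these are literally the components and naturality $2$-cells of a lax transformation $ H^\wedge \to K^\wedge $, the conditions transcribing term by term, and likewise for $ H^\vee \to K^\vee $. Modifications carry over immediately, being given by a family of $2$-cells whose single coherence condition reads the same on both sides. Thus the bijection of objects extends to the hom-$2$-categories, completing the statement.
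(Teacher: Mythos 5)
Your overall strategy (fix the data, then check that the lax-functor coherence collapses to Gray's compatibility equations for $h(f,g)$, using the factorization $(f,g)=(f,1)(1,g)=(1,g)(f,1)$) is the same as the paper's, but your explicit formulas have the lax and colax cases interchanged, and this is a genuine error rather than a relabelling. The direction of the cell in (4) is fixed: $h(f,g)$ points from the composite $H(f,B')\circ H(A,g)$ (``$g$ first'') to $H(A',g)\circ H(f,B)$ (``$f$ first''). Conditions (b)(i)--(ii) demand that certain laxity morphisms be \emph{identities}, which in particular forces their source and target $1$-cells to be equal. For your choice $H^\wedge(f,g)=H(f,B')\circ H(A,g)$, take the composable pair $(f,1)\colon (A,B)\to (A',B)$ and $(f',g')\colon (A',B)\to (A'',B')$; then
$$
H^\wedge(f',g')\circ H^\wedge(f,1)\;=\;H(f',B')\circ H(A',g')\circ H(f,B),
\qquad
H^\wedge(ff',g')\;=\;H(ff',B')\circ H(A,g'),
$$
and these are different $1$-cells (the first applies $f$ before $g'$, the second after), so the constraint in (b)(i) cannot be an identity --- no matter which direction one assigns to $h$. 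Moreover, a lax structure on the ``$g$ first'' composite would require a comparison from ``$f$ first'' to ``$g$ first'', i.e.\ $h(f,g)$ \emph{inverted}, which a Gray functor does not supply. The correct assignments, as in the paper, are $H^\wedge(f,g)=H(A',g)\circ H(f,B)$ (``$f$ first'', lax, with laxity cells the whiskerings of $h(f',g)$) and $H^\vee(f,g)=H(f,B')\circ H(A,g)$ (``$g$ first'', colax): your two formulas must simply be swapped, after which your coherence bookkeeping goes through.

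A secondary caution: deriving (a)$\leftrightarrow$(c) from (a)$\leftrightarrow$(b) by ``reversing $2$-cells'' alone is not quite right either. Applying $(-)^{co}$ turns $h(f,g)$ into a cell pointing from ``$f$ first'' to ``$g$ first'', which is a Gray functor for the \emph{opposite} handedness convention, not the one of (4); to return to the notion as defined you must also swap the two variables, i.e.\ work with ${\cal B}^{co}\times{\cal A}^{co}\to{\cal C}^{co}$. The paper avoids this subtlety by writing explicit formulas for both $H^\wedge$ and $H^\vee$ directly in terms of $H$ and $h$ and verifying the equations by direct calculation; with the swap above and this repair, the remainder of your outline, including the term-by-term matching of quasi-natural transformations and modifications, agrees with it.
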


\begin{proof} (Sketch)

\noindent $H^\wedge (A,B) = H^\vee (A,B) = H (A,B) $,

\noindent $ H^\wedge (f,g) = H(f,B) H(A',g)$,

\noindent $ h^\wedge (f,g;f',g') = H(f,B) h(f',g) H(A'',g)$,

\noindent $ H^\vee (f,g) = H(A,g) H(f,B') $,

\noindent $ h^\vee (f,g; f,g) = H(A,g) h(f,g') H(f',B'') $,

\noindent $ H(A,g) = H^\wedge (1_A,g) = H^\vee (1_A,g) $,

\noindent $ H(f,B) = H^\wedge (f, 1_B) = H^\vee (f, 1_B)$.

\noindent It is now just a question of direct calculation to verify all the equations.

\end{proof}

\subsection{Gray categories as intercategories -- lax case}\label{Gray.2}

\ 

It follows from the proposition that composition in a true Gray category may be considered as a special kind of lax functor
$$
{\cal A} (A,B) \times {\cal A} (B,C) \to {\cal A} (A,C)
$$
or, alternatively, as a colax functor. In this way we define the horizontal composition of $2$-cells and $3$-cells in two different ways. Thus we get two different types of ``$3$-category'' with lax or colax interchange. In \cite{GPS}, two different tricategories are gotten from a ${\bf Gray}$ category, which are called left and right, but are equivalent and one is chosen arbitrarily. For true Gray categories the two are quite different, the one represented vertically, the other horizontally as intercategories.

Consider first the lax case. We take $2$-categories as vertical double categories, which is forced because that is where the laxity occurs. Then we put all the homs together and get a category object
$$
\sum_{A,B,C \in \Ob ({\cal A})} {\cal A} (A,B) \times {\cal A} (B,C) \threepppp/>`>`>/<400>^{p_1} |{m} _{p_2} \sum_{A,B \in \Ob({\cal A})} {\cal A}(A,B) \three/>`<-`>/^{\partial_0} | {\id}_{\partial_1}\ \Ob({\cal A}) 
$$
in $ \Dlax $, which is of course an intercategory of a special sort. It remains only to determine how it is special in intercategory terms.

Referring to the table of Section 4 of \cite{ICI}, we see that a true Gray category $ {\cal A} $ gives an intercategory ${\ic A}_l$ as follows:

\noindent (1) The objects are those of $ {\cal A} $

\noindent (2) Transversal arrows are identities

\noindent (3) Horizontal arrows are the $1$-cells of $ {\cal A}$

\noindent (4) Vertical arrows are identities

\noindent (5) Horizontal cells are identities

\noindent (6) Vertical cells are identities

\noindent (7) Basic cells are the $2$-cells of $ {\cal A} $

\noindent (8) Cubes are the $3$-cells of $ {\cal A} $.

\noindent So a general cube would look like
$$
\bfig\scalefactor{.7}

\square(0,300)/>`=``/[A `B `A `;f` ``]

\square(300,0)/>`=`=`>/[A ` B `A `B; f```g]

\morphism(0,800)|b|/=/<300,-300>[A` A;]

\morphism(500,800)/=/<300,-300>[B ` B;]

\morphism(0,300)|b|/=/<300,-300>[A`A;]

\morphism(550,350)|r|/=>/<0,-180>[`;\ \alpha']


\morphism(130,430)/=/<80,-80>[`;]



\morphism(370,700)/=/<80,-80>[`;]

\efig
$$
with an $ \alpha $ in the back face and a $3$-cell $ \alpha \to \alpha' $ inside, corresponding to
$$
\bfig\scalefactor{.7}

\morphism(0,0)|a|/{@{>}@/^1.8em/}/<800,0>[A `B;f]

\morphism(0,0)|b|/{@{>}@/_1.8em/}/<800,0>[A ` B;g]

\morphism(300,150)|l|/>/<0,-250>[`; \alpha]

\morphism(350,0)/>/<150,0>[`;]

\morphism(550,150)|r|/>/<0,-250>[`;\alpha']

\efig
$$
in $ {\cal A} $.

Transversal and vertical composition come from the $2$-category homs of $ {\cal A} $. Horizontal composition of arrows is that of $ {\cal A} $, but for basic cells and cubes the decision was made, when we chose the lax case, to take
$$
\bfig\scalefactor{.7}

\square/>`=`=`>/[A`B`A`B;f```g]

\morphism(250,350)|r|/=>/<0,-150>[`;\ \alpha]

\square(500,0)/>`=`=`>/[B`C`B`C;h```k]

\morphism(750,350)|r|/=>/<0,-150>[`;\ \beta]

\place(1300,250)[=]

\square(1500,0)/>`=`=`>/<700,500>[A`C`A`C;fh```gk]

\morphism(1700,350)|r|/=>/<0,-150>[`;\ \alpha h \cdot g\beta]

\efig
$$
extended to cubes in the obvious way.

For interchange on
$$
\bfig\scalefactor{.7}

\square/>`=`=`>/[A `B`A`B;g```l]

\square(0,500)/>`=`=`>/[A`B`A`B;f```]

\morphism(250,350)|r|/=>/<0,-150>[`;\ \ov{\alpha}]

\morphism(250,850)|r|/=>/<0,-150>[`;\ \alpha]

\square(500,0)/>`=`=`>/[B`C`B`C;k```m]

\square(500,500)/>`=`=`>/[B`C`B`C;h```]

\morphism(750,350)|r|/=>/<0,-150>[`;\ \ov{\beta}]

\morphism(750,850)|r|/=>/<0,-150>[`;\ \beta]

\efig
$$
$$
(\alpha \circ \beta) \bullet (\ov{\alpha} \circ \ov{\beta}) = \alpha h \cdot g \beta \cdot \ov{\alpha} k \cdot l \ov{\beta}
$$
and
$$
(\alpha \bullet \ov{\alpha}) \circ (\beta \bullet \ov{\beta}) = \alpha h \cdot \ov{\alpha} h \cdot l\beta \cdot l \ov{\beta}
$$
Grayness of composition gives a $2$-cell in $ {\cal A} (A,C) $
$$
x : g \beta \cdot \ov{\alpha} k \to \ov{\alpha} h \cdot l \beta
$$
and so a $2$-cell
$$
\alpha h \cdot x \cdot l \ov{\beta} : (\alpha \circ \beta) \bullet (\ov{\alpha} \circ \ov{\beta}) \to (\alpha \bullet \ov{\alpha}) \circ (\beta \bullet \ov{\beta})
$$
i.e.\ a special cube
$$
\chi   : \dfrac{\alpha | \beta}{\ov{\alpha} | \ov{\beta}} \ \to\  \left.\dfrac{\alpha}{\ov{\alpha}} \right|\dfrac{\beta}{\ov{\beta}}
$$
Conditions b(i) and b(ii) of Proposition \ref{GrayMacLane} say that if either $ \beta $ or $ \ov{\alpha} $ is $ 1 $, i.e.\ $\Id $, then $ \chi $ is equality. Condition b(iii) says that $ \delta : \Id \to \Id |\Id $ is an equality. $ \mu $ and $ \tau $ are also equalities because the hom's are $2$-categories. These conditions, together with the fact that all compositions are strict, characterize those intercategories arising from true Gray categories by considering the composition as a lax functor.

\subsection{Gray categories as intercategories -- colax case}\label{Gray.3}

\ 

The colax case is similar. Instead of using (b) of Proposition \ref{GrayMacLane} we use (c) to get a category object in $ \Dcolax $. We again get an intercategory $ {\ic A}_c $ except that now the $1$-cells of $ {\cal A} $ are made into the vertical arrows of  ${\ic A}_c $, its horizontal arrows being identities. A general cube is now

$$
\bfig\scalefactor{.7}

\square(0,300)/=`>``/[A `A `B `;` f``]

\square(300,0)/=`>`>`=/[A ` A `B `B; `f`g`]

\morphism(0,800)|b|/=/<300,-300>[A` A;]

\morphism(500,800)/=/<300,-300>[A ` A;]

\morphism(0,300)|b|/=/<300,-300>[B`B;]

\morphism(480,250)|a|/=>/<175,0>[`;\ \alpha']

\morphism(130,430)/=/<75,-75>[`;]

\morphism(350,700)/=/<75,-75>[`;]

\efig
$$
with a $3$-cell $ \alpha \to \alpha' $ inside. Vertical composition is given by
$$
\bfig\scalefactor{.7}

\square/=`>`>`=/[B`B`C`C;`h`k`]

\morphism(210,250)|a|/=>/<175,0>[`;\beta]

\square(0,500)/=`>`>`=/[A`A`B`B;`f`g`]

\morphism(210,750)|a|/=>/<175,0>[`;\alpha]

\place(800,500)[=]

\square(1200,0)/=`>`>`=/<500,1000>[A`A`C`C;`fh`gk`]

\morphism(1370,500)|a|/=>/<250,0>[`;f\beta \cdot \alpha k]

\efig
$$
$$
\alpha \bullet \beta = f \beta \cdot \alpha k.
$$
Again, $\delta, \mu, \tau $ are equalities but now
$$
\chi : \dfrac{\alpha | \beta}{\ov{\alpha} | \ov{\beta}}  \ \to\ \left.\dfrac{\alpha}{\ov{\alpha}}\right|\dfrac{\beta}{\ov{\beta}}
$$
is equality if either $ \alpha $ or $ \ov{\beta} $ is $ 1 $, i.e.\ $\id$.

\subsection{Gray categories as intercategories -- symmetric case}\label{Gray.4}

\ 

Having two equally good ways of considering true Gray categories as intercategories is a bit unsatisfactory. There is a better way, not suggested by Proposition \ref{GrayMacLane} but rather by the theory of intercategories, where there is room to incorporate both aspects symmetrically.

Given a true Gray category $ {\cal A} $ we construct an intercategory $ {\ic A}_s $ as follows.

\noindent (1) Objects same as $ {\cal A} $

\noindent (2) Transversal arrows are identities

\noindent (3) Horizontal and vertical arrows are $1$-cells of $ {\cal A} $

\noindent (4) Horizontal and vertical cells are identities

\noindent (5) Basic cells are co-quintets
$$
\bfig\scalefactor{.7}

\square[A`A'`B`B';f`k`k'`g]

\morphism(200,250)|a|/=>/<175,0>[`;\alpha]

\efig
$$

\noindent (6) Cubes are $3$-cells $ \alpha \to \ov{\alpha} $

$$
\bfig\scalefactor{.7}

\square(0,300)/>`>``/[A `A' `B `;f`k ``]

\square(300,0)[A ` A' `B `B'; f`k`k'`g]

\morphism(0,800)|b|/=/<300,-300>[A` A;]

\morphism(500,800)/=/<300,-300>[A' ` A';]

\morphism(0,300)|b|/=/<300,-300>[B`B;]

\morphism(500,250)|a|/=>/<175,-0>[`;\ov{\alpha}]


\morphism(130,430)/=/<80,-80>[`;]



\morphism(370,700)/=/<80,-80>[`;]

\efig
$$
Transversal composition is either trivial or, for cubes, composition of $3$-cells. Horizontal composition of basic cells is given by
$$
\bfig\scalefactor{.7}

\square[A`A'`B`B';f`k`k'`g]

\morphism(200,230)|a|/=>/<175,0>[`;\alpha]

\square(500,0)[A'`A''`B'`B'';f'``k''`g']

\morphism(700,230)|a|/=>/<175,0>[`;\alpha']

\place(1400,250)[=]

\square(1800,0)<700,500>[A`A''`B`B'';ff'`k`k''`gg']

\morphism(2050,230)|a|/=>/<220,0>[`;\alpha g' \cdot f\alpha']

\efig
$$
$$
\phantom{Vertical composition}\alpha \circ \alpha' = (k gg' \to^{\alpha g'} fk'g' \to^{f\alpha'} ff'k'')\phantom{Vertical composition}(1) 
$$
Vertical composition of basic cells is
$$
\bfig\scalefactor{.7}

\square[B`B'`C`C';`l`l'`h]

\morphism(200,250)|a|/=>/<175,0>[`;\beta]

\square(0,500)[A`A'`B`B';f`k`k'`g]

\morphism(200,750)|a|/=>/<175,0>[`;\alpha]

\place(850,500)[=]

\square(1200,0)<500,1000>[A`A'`C`C';f`kl`k'l'`h]

\morphism(1350,500)|a|/=>/<200,0>[`;k\beta \cdot \alpha l']

\efig
$$
$$
\phantom{For cubes, horizontal and}\alpha \bullet \beta = (klh \to^{k\beta} kgl' \to^{\alpha l'} fk'l').\phantom{For cubes, horizontal and}(2)
$$
For cubes, horizontal and vertical composition are given by the same formulas but applied to $3$-cells.

Note that there is really no choice for these composites when we work with quintets and they reduce to the ones above when the horizontal or vertical domains and codomains are identities. However, the basic cells are oriented differently in the first case, which is unavoidable.

For interchange on
$$
\bfig\scalefactor{.7}

\square[B`B'`C`C';`l`l'`h]

\morphism(200,250)|a|/=>/<175,0>[`;\beta]

\square(0,500)[A`A'`B`B';f`k`k'`g]

\morphism(200,750)|a|/=>/<175,0>[`;\alpha]

\square(500,0)[B'`B''`C'`C'';``l''`h']

\morphism(700,250)|a|/=>/<175,0>[`;\beta']

\square(500,500)[A'`A''`B'`B'';f'``k''`g']

\morphism(700,750)|a|/=>/<175,0>[`;\alpha']

\efig
$$
$ \dfrac{\alpha | \alpha'}{\beta | \beta'} $ is the top composite and, $ \left.\dfrac{\alpha}{\beta}\right|\dfrac{\alpha'}{\beta'} $, the bottom in

$$
\bfig\scalefactor{.7}

\morphism(0,500)|a|/>/<800,0>[klhh'`kgl'h';k\beta h']

\Ctriangle(800,0)/<-``>/[kgg'l''`kgl'h'`fk'l'h';kg\beta'``\alpha l'h']

\Dtriangle(1300,0)/`>`<-/[kgg'l''`fk'g'l''`fk'l'h';`\alpha g'l''`fk'\beta']

\morphism(1800,500)|a|/>/<800,0>[fk'g'l''`ff'k''l'';f\alpha' l'']

\efig
$$
and
$$
\chi : \dfrac{\alpha | \alpha'}{\beta | \beta'} \ \to\ \left.\dfrac{\alpha}{\beta}\right|\dfrac{\alpha'}{\beta'}
$$
is given by 
$$
k\beta h' \cdot x \cdot f \alpha' l''
$$
where
$$
x : kg\beta' \cdot \alpha g' l'' \to \alpha l'h' \cdot fk'\beta'
$$
is the Gray morphism in the diamond above.

\begin{remark}\label{GrayCondition} Note that if either $ \alpha $ or $ \beta' $ are identities, then $ x $ is equality and so $ \chi $ is too, a fact we will use later.

\end{remark}
The straightforward calculations required to show that we do get an intercategory are left to the reader.

It is harder now to pin down the precise conditions required for an intercategory to be of the form ${\cal A}_s $. First of all we have to express, in intercategory terms, what it means for the horizontal and vertical arrows to be ``the same''. It is not a self-duality because co-quintets themselves are not self-dual, so the word symmetric is perhaps not appropriate. It rather has to do with companions but care has to be taken in the absence of the strict interchange law.

In what follows, we assume for convenience that all composites are strictly unitary and associative, and that transversal arrows and horizontal and vertical cells are all transversal identities, conditions which hold for $ {\ic A}_s $.

\begin{definition}\label{isotropic} An intercategory is {\em isotropic} if:

\noindent (i) there is given a functorial bijection between horizontal and vertical arrows
$$
\bfig\scalefactor{.7}

\square/`@{>}|{\bb}``/[A``B`;`f``]

\place(300,250)[\longleftrightarrow]

\morphism(600,250)/@{>}|{\cc}/[A`B;f_*]

\efig
$$
$$
(\Id_A)_* = \id_A \ \ \ \mbox{and}\ \ \ \left(\dfrac{f}{g}\right)_* = f_* | g_*
$$

\noindent (ii) for each $ f $ there are given basic cells
$$
\bfig\scalefactor{.7}

\square/@{>}|{\cc}`@{>}|{\bb}`@{>}|{\bb}`@{>}|{\cc}/[A `A `A `B;\id_A `\Id_A `f `f_*]

\place(250,250)[\scriptstyle \eta_f]

\place(800,250)[\mbox{and}]

\square(1100,0)/@{>}|{\cc}`@{>}|{\bb}`@{>}|{\bb}`@{>}|{\cc}/[A `B `B `B;f_* ` f `\Id_B `\id_B]

\place(1350,250)[\scriptstyle \epsilon_f]

\efig
$$
satisfying

\noindent (iii) (companionship)
$$
\eta_f | \epsilon_f = \Id_{f_*}\ \ \ \mbox{and}\ \ \ \dfrac{\eta_f}{\epsilon_f} = \id_f
$$

\noindent (iv) (functoriality)
$$
	\epsilon_{\Id_A} = \Id_{\id_A} \ \ \  \epsilon_{\frac{f}{g}} = \dfrac{\epsilon_f | \Id_{g_*}}{\id_g | \epsilon_g}
$$
$$
\eta_{\Id_A} = \id_{\Id_A} \ \ \ \eta_{\frac{f}{g}} = \left.\dfrac{\eta_f}{\Id_{f_*}} \right|\dfrac{\id_g}{\eta_g}
$$
(One may wonder why composites in the conditions for $ \epsilon_\frac{f}{g} $ and $ \eta_\frac{f}{g} $ are expressed in that order. It is merely for esthetic reasons. By ``whiskering'' below, the order is immaterial.)

\noindent (v) (whiskering)
$$
\chi \left(\bfig\square/```/<100,120>[*`{\scriptstyle \id}`{\scriptstyle \Id} `{\scriptstyle \eta};```]\efig\right) = 1 \ \ \ \mbox{and}\ \ \chi \left(\bfig\square/```/<100,120>[{\scriptstyle \epsilon} `{\scriptstyle \Id} `{\scriptstyle \id}`*;```]\efig\right) = 1
$$
\end{definition}

A word about condition (v): Given a basic cell $ \phi $ and a vertical arrow $ x $ as in
$$
\bfig\scalefactor{.7}

\square/@{>}|{\cc}`@{>}|{\bb}`@{>}|{\bb}`@{>}|{\cc}/[A `B `C `D;f `g `h `k]

\place(250,250)[\scriptstyle \phi]

\square(0,500)/`@{>}|{\bb}``/[x``A`B;`x``]

\efig
$$
we can form the following $ 2 \times 2 $ array of cells
$$
\bfig\scalefactor{.7}

\square/`@{>}|{\bb}``>/<700,500>[A `A `C `C;`g``\id_c]

\square(700,0)/@{>}|{\cc}`@{>}|{\bb}`@{>}|{\bb}`@{>}|{\cc}/<700,500>[A `B `C `C;f`g`h`k]

\square(0,500)/@{>}|{\cc}`@{>}|{\bb}`@{>}|{\bb}`@{>}|{\cc}/<700,500>[X `A `A `A;x_* `x ``\id_A]

\square(700,500)/@{>}|{\bb}`@{>}|{\bb}`@{>}|{\bb}`/<700,500>[A `B `A `B;f`\scriptstyle\Id_A `\Id_B `]

\place(350,250)[\scriptstyle \id_g]

\place(1050,250)[\scriptstyle \phi]

\place(350,750)[\scriptstyle \epsilon_x]

\place(1050,750)[\scriptstyle \Id_f]

\efig
$$
and condition (v) says that it can be evaluated in either order. The result may be thought of as whiskering $ \phi $ with $ x $.

\begin{proposition} $ {\ic A}_s $ is isotropic.

\end{proposition}

\begin{proof} (i) $ f^* $ is $ f $ so the conditions are trivially satisfied.

\noindent (ii) $ \eta_f $ and $ \epsilon_f $ are both $ 1_f $.

\noindent (iii) Formula (1) gives $ \eta_f | \epsilon_f = 1_F $ which is $ \Id_f $, and formula (2) gives $ \dfrac{\eta_f}{\epsilon_f} = 1_f $ which is also $ \id_f $.

\noindent (iv) All the cells involved here are $ 1 $, the identity $1$-cell in the hom $2$-categories of ${\cal A} $, so long as the domains and codomains correspond, any equation holds, and this is the case with functoriality.

\noindent (v) Both $ \eta $ and $ \epsilon $ are given by identities, so ``whiskering'' follows by Remark \ref{GrayCondition}.

\end{proof}

There is a further condition needed to characterize those intercategories of the form $ {\ic A}_s $.

\begin{definition} We say that $ \chi $ (or ${\ic A}$) satisfies the {\em Gray condition} if
$$
\chi \left(\bfig\square/```/<100,120>[{\scriptstyle \id}`*`*`*;```]\efig\right) = 1 \ \ \ \ \ \ \ \ \ \chi \left(\bfig\square/```/<100,120>[*`*`*`{\scriptstyle \id};```]\efig\right) = 1
$$
$$
\chi \left(\bfig\square/```/<100,120>[{\scriptstyle \Id}`*`*`*;```]\efig\right) = 1 \ \ \ \ \ \ \ \ \ \chi \left(\bfig\square/```/<100,120>[*`*`*`{\scriptstyle \Id};```]\efig\right) = 1
$$
where the $ * $ represent arbitrary (compatible) cells.
\end{definition}

By Remark \ref{GrayCondition}, $ {\ic A}_s $ satisfies the Gray condition. We can now state the main theorem of this section.

\begin{theorem}\label{TrueGray} There are three equivalent ways of representing a true Gray category as an intercategory. They all satisfy the following properties:

\noindent (a) transversal arrows and horizontal and vertical cells are transversal identities,

\noindent (b) all composites are strictly unitary and associative,

\noindent (c) the degenerate interchangers $ \tau $, $ \delta $, $ \mu $ are identities, i.e. the intercategory is chiral.

The three ways are characterized by:

\noindent (1) The lax case

\noindent (d) vertical arrows are identities,

\noindent (e) $ \chi \left(\bfig\square/```/<100,120>[*`{\scriptstyle \Id}`*`*;```]\efig\right) = 1 \ \ \ \ \mbox{and}\ \ \ \ \ \ \chi \left(\bfig\square/```/<100,120>[*`*`{\scriptstyle \Id}`*;```]\efig\right) = 1 $.

\noindent (2) The colax case

\noindent (d) horizontal arrows are identities,

\noindent (e) $ \chi \left(\bfig\square/```/<100,120>[{\scriptstyle \id}`*`*`*;```]\efig\right) = 1 \ \ \ \ \mbox{and}\ \ \ \ \ \ \chi \left(\bfig\square/```/<100,120>[*`*`*`{\scriptstyle \id};```]\efig\right) = 1 $.

\noindent (3) The symmetric case

\noindent (d) the intercategory is isotropic,

\noindent (e) $ \chi $ satisfies the Gray condition.

\end{theorem}

\begin{proof} The equivalence of true Gray categories with intercategories satisfying conditions (a)-(c), (1d), (1e) follows from the discussion of Section \ref{Gray.2}. The colax case is similar and touched upon in Section \ref{Gray.3}.

That the intercategory $ {\ic A}_s $ constructed from a true Gray category satisfies (a)-(c), (3d), (3e) follows from the discussion in this section. It remains to show that any intercategory $ {\ic B} $ satisfying these conditions corresponds to a true Gray category.

Given such a $ {\ic B} $ we get an intercategory $ {\ic A} $ satisfying (a)-(c), (2d), (2e) by restricting all elements to those involving no horizontal arrows except identities. This works because the composite of two horizontal identities is an identity. Such an $ {\ic A} $ comes from a unique true Gray category $ {\cal A} $ with the same ingredients, only packaged differently.

If we start with $ {\cal A} $ and construct $ {\ic A}_s $ and then restrict to identity horizontal arrows, we get $ {\ic A}_s $ which corresponds to $ {\cal A} $. The heart of the proof lies in showing that when we start with a $ {\ic B} $ satisfying the conditions (3), restrict to $ {\ic A} $ and then take $ {\ic A}_s $ we get an intercategory isomorphic to $ {\ic B} $.

The objects, vertical arrows and vertical cells (identities) are the same for $ {\ic A} $ and $ {\ic B}$. Define $ F : {\ic A}_s \to {\ic B} $ to be the identity on these. A horizontal arrow of $ {\ic A}_s $, $ f : A \toc B $ is the same as a vertical arrow so define $ F(f) = f_* $, which is functorial by condition (i) of Definition \ref{isotropic}. Horizontal cells are identities in both cases so there is no problem defining $ F $ on them. A basic cell in $ {\ic A}_s $ is a quintet 
$$
\bfig\scalefactor{.7}

\square[A `B `C `D;f `g `h `k]

\morphism(200,230)/=>/<175,0>[`;{\scriptstyle \phi}]

\efig
$$
Define $ F(\phi) $ to be 
$$
\bfig\scalefactor{.7}

\square/=`=``>/[C `C `C `D;```k_*]

\square(0,500)/=`>``=/[A `A `C `C;`g``]

\square(500,0)/`>`>`=/[C `B `D `D;`k`h`]

\square(500,500)/=`>`>`/[A `A `C `B;`g`f`]

\square(1000,0)/=``>`=/[B `B `D `D;``h`]

\square(1000,500)/>``=`=/[A `B `B `B;f_*```]

\place(250,250)[\scriptstyle \eta_k]

\place(250,750)[\scriptstyle \id_g]

\place(750,500)[\scriptstyle \phi]

\place(1250,250)[\scriptstyle \id_h]

\place(1250,750)[\scriptstyle \eta_f]

\efig
$$
which we will write as
$$
\setlength{\unitlength}{.5mm}
\begin{picture}(30,20)
\put(1,10){\line(1,0){8}}
\put(10,1){\line(0,1){18}}
\put(20,1){\line(0,1){18}}
\put(21,10){\line(1,0){8}}

\put(5,3){\ps{\eta}}

\put(4,13){\ps{\id_g}}
\put(15,8){\ps{\phi}}
\put(26,3){\ps{\id_h}}
\put(25,13){\ps{\epsilon}}

\end{picture}
$$
A cube in $ {\ic A}_s $ 
$$
\bfig\scalefactor{.7}

\square(0,300)/>`>``/[A `B `C `;f`g ``]

\square(300,0)[A ` B `C `D; f`g`h`k]

\morphism(0,800)|b|/=/<300,-300>[A` A;]

\morphism(500,800)/=/<300,-300>[B ` B;]

\morphism(0,300)|b|/=/<300,-300>[C`C;]

\place(500,250)[\ov{\phi}]

\morphism(130,430)/=/<80,-80>[`;]

\morphism(370,700)/=/<80,-80>[`;]

\efig
$$
is a $3$-cell $ {\frak a} : \phi \to \ov{\phi} $
$$
\bfig\scalefactor{.7}

\square(0,300)/`>``/[C``D`;`k``]

\square(0,800)/=`>``/[A`A`C`;`g``]

\square(300,0)/`>`>`=/[C`B`D`D;``h`]

\square(300,500)/=`>`>`/[A`A`C`B;``f`]

\morphism(0,300)/=/<300,-300>[D`D;]

\morphism(0,800)/=/<300,-300>[C`C;]

\morphism(0,1300)/=/<300,-300>[A`A;]

\morphism(500,1300)/=/<300,-300>[A`A;]

\place(550,500)[\ov{\phi}]

\place(-500,650)[{\frak a}:]

\efig
$$
Define
$$
\setlength{\unitlength}{.6mm}
\begin{picture}(60,20)
\put(0,10){$F({\frak a}) \ =$}
\put(31,10){\line(1,0){8}}
\put(40,1){\line(0,1){18}}
\put(50,1){\line(0,1){18}}
\put(51,10){\line(1,0){8}}

\put(35,3){\ps{1_{\eta_k}}}
\put(35,13){\ps{1_{\id_g}}}
\put(45,8){\ps{{\frak a}}}
\put(55,3){\ps{1_{\id_h}}}
\put(55,13){\ps{1_{\epsilon_f}}}

\end{picture}
$$
$$
\bfig\scalefactor{.7}

\morphism(0,800)/=/<0,-500>[C`C;]

\morphism(0,1300)/>/<0,-500>[A`C;g]

\morphism(0,1300)/=/<500,0>[A`A;]

\morphism(500,1300)/=/<500,0>[A`A;]

\morphism(1000,1300)/>/<500,0>[A`B;f_*]

\square(300,0)/=`=`>`>/[C`C`C`D;``k`k_*]

\square(300,500)|arra|/=`>`>`=/[A`A`C`C;`g`g`]

\square(1300,0)/=`>`>`=/[B`B`D`D;`h`h`]

\square(1300,500)/>`>`=`=/[A`B`B`B;f_*`f``]

\morphism(800,0)/=/<500,0>[D`D;]

\morphism(800,1000)/=/<500,0>[A`A;]

\morphism(0,300)/=/<300,-300>[C`C;]

\morphism(0,800)/=/<300,-300>[C`C;]

\morphism(0,1300)/=/<300,-300>[A`A;]

\morphism(500,1300)/=/<300,-300>[A`A;]

\morphism(1000,1300)/=/<300,-300>[A`A;]

\morphism(1500,1300)/=/<300,-300>[B`B;]

\morphism(115,420)/=/<70,-70>[`;]

\morphism(115,920)/=/<70,-70>[`;]

\morphism(350,1200)/=/<70,-70>[`;]

\morphism(850,1200)/=/<70,-70>[`;]

\morphism(1350,1200)/=/<70,-70>[`;]

\place(550,250)[\scriptstyle \eta_k]

\place(550,750)[\scriptstyle \id_g]

\place(1050,500)[\scriptstyle \ov{\phi}]

\place(1550,250)[\scriptstyle \id_h]

\place(1550,750)[\scriptstyle \epsilon_f]

\efig
$$
The horizontal identity
$$
\bfig\scalefactor{.7}

\square/@{>}|{\cc}`@{>}|{\bb}`@{>}|{\bb}`@{>}|{\cc}/[A `A `C `C;\id_A `g `g `\id_C]

\place(250,250)[\scriptstyle \id_g]

\efig
$$
in $ {\ic A}_s $ is
$$
\bfig\scalefactor{.7}

\square/`@{>}|{\bb}`@{>}|{\bb}`=/[C `A `C `C;`\Id_C `g`]

\square(0,500)/=`@{>}|{\bb}`@{>}|{\bb}`/[A `A `C `A;`g`\Id_A`]

\place(250,500)[\scriptstyle \id_g]

\efig
$$
so
$$
\setlength{\unitlength}{.8mm}
\begin{picture}(170,20)

\put(-8,8){\ps{F(\id_g)}}
\put(10,8){\ps{=}}

\put(21,10){\line(1,0){8}}
\put(30,1){\line(0,1){18}}
\put(40,1){\line(0,1){18}}
\put(41,10){\line(1,0){8}}

\put(25,3){\ps{\eta_\Id}}
\put(25,13){\ps{\id_g}}
\put(35,8){\ps{\id_g}}
\put(45,3){\ps{\id_g}}
\put(45,13){\ps{\epsilon_\Id}}

\put(60,8){\ps{=}}

\put(71,10){\line(1,0){8}}
\put(80,1){\line(0,1){18}}
\put(90,1){\line(0,1){18}}
\put(91,10){\line(1,0){8}}

\put(75,3){\ps{\id_\Id}}
\put(75,13){\ps{\id_g}}
\put(85,8){\ps{\id_g}}
\put(95,3){\ps{\id_g}}
\put(95,13){\ps{\Id_\id}}

\put(110,8){\ps{=}}

\put(130,6){\line(0,1){8}}
\put(140,6){\line(0,1){8}}

\put(125,8){\ps{\id_g}}
\put(135,8){\ps{\id_g}}
\put(145,8){\ps{\id_g}}

\put(160,8){\ps{=}}

\put(170,8){\ps{\id_g}}

\end{picture}
$$
The second equality uses functoriality of $ \eta $ and $ \epsilon $, and the third that $ \mu = 1 $ so that $ \dfrac{\id}{\id} = \id $.

The vertical identity
$$
\bfig\scalefactor{.7}

\square/@{>}|{\cc}`@{>}|{\bb}`@{>}|{\bb}`@{>}|{\cc}/[A `B `A `B;f `\Id_A `\Id_A `f]

\place(250,250)[\scriptstyle \Id_f]

\place(1500,250)[{\mbox{in}}\ \ {\ic A}_s]

\place(-1000,250)[\ ]

\efig
$$
is
$$
\bfig\scalefactor{.7}

\square/`@{>}|{\bb}`@{>}|{\bb}`=/[A `B `B `B;`f`\Id_B`]

\square(0,500)/=`@{>}|{\bb}`@{>}|{\bb}`/[A `A `A `B;`\Id_A `f`]

\place(250,500)[\scriptstyle \id_f]

\place(1500,500)[{\mbox{in}}\ \ {\ic A}]

\place(-1000,250)[\ ]

\efig
$$
so
$$
\setlength{\unitlength}{.8mm}
\begin{picture}(160,20)

\put(-8,8){\ps{F(\Id_f)}}
\put(10,8){\ps{=}}

\put(21,10){\line(1,0){8}}
\put(30,1){\line(0,1){18}}
\put(40,1){\line(0,1){18}}
\put(41,10){\line(1,0){8}}

\put(25,3){\ps{\eta_f}}
\put(25,13){\ps{\id}}
\put(35,8){\ps{\id_f}}
\put(45,3){\ps{\id}}
\put(45,13){\ps{\epsilon_f}}

\put(60,8){\ps{=}}

\put(80,6){\line(0,1){8}}
\put(90,6){\line(0,1){8}}

\put(75,8){\ps{\eta_f}}
\put(85,8){\ps{\id_f}}
\put(95,8){\ps{\epsilon_f}}

\put(110,8){\ps{=}}

\put(130,6){\line(0,1){8}}

\put(125,8){\ps{\eta_f}}
\put(135,8){\ps{\epsilon_f}}

\put(150,8){\ps{=}}

\put(160,8){\ps{\Id_{f_*}}}

\end{picture}
$$
where the second equality uses that $ \tau = 1 $, so both $\id$'s are $\Id$'s.

Horizontal composition in $ {\ic A}_s $ 
$$
\bfig\scalefactor{.7}

\square[A `B `C `D;f `g `h `k]

\square(500,0)/>``>`>/[B `E `D `F;l ``m `n]

\place(250,250)[\scriptstyle \phi]

\place(750,250)[\scriptstyle \psi]

\efig
$$
is given by
$$
\setlength{\unitlength}{.8mm}
\begin{picture}(20,30)

\put(10,1){\line(0,1){28}}
\put(1,10){\line(1,0){8}}
\put(11,20){\line(1,0){8}}

\put(5,3){\ps{\id_n}}
\put(5,18){\ps{\phi}}
\put(15,8){\ps{\psi}}
\put(15,23){\ps{\id_f}}

\put(30,13){in}
\put(40,13){\ps{{\ic A}}}

\end{picture}
$$
so
$$
\setlength{\unitlength}{.8mm}
\begin{picture}(150,30)

\put(0,13){\ps{F(\phi\circ\psi)}}
\put(20,13){\ps{=}}

\put(40,1){\line(0,1){28}}
\put(50,1){\line(0,1){28}}
\put(60,1){\line(0,1){28}}
\put(31,20){\line(1,0){8}}
\put(41,10){\line(1,0){8}}
\put(51,20){\line(1,0){8}}
\put(61,10){\line(1,0){8}}

\put(35,8){\ps{\eta_{\frac{k}{n}}}}
\put(35,23){\ps{\id_g}}
\put(45,3){\ps{\id_n}}
\put(45,18){\ps{\phi}}
\put(55,8){\ps{\psi}}
\put(55,23){\ps{\id_f}}
\put(65,3){\ps{\id_m}}
\put(65,18){\ps{\epsilon_{\frac{f}{l}}}}

\put(80,13){\ps{=}}
\put(80,10){\ps{{\scriptstyle (1)}}}

\put(100,1){\line(0,1){18}}
\put(100,21){\line(0,1){8}}
\put(110,1){\line(0,1){28}}
\put(120,1){\line(0,1){28}}
\put(130,1){\line(0,1){28}}
\put(140,1){\line(0,1){8}}
\put(140,11){\line(0,1){8}}
\put(140,21){\line(0,1){8}}
\put(91,10){\line(1,0){8}}
\put(91,20){\line(1,0){18}}
\put(101,10){\line(1,0){8}}
\put(111,10){\line(1,0){8}}
\put(121,20){\line(1,0){8}}
\put(131,10){\line(1,0){18}}
\put(131,20){\line(1,0){18}}

\put(95,3){\ps{\Id_{k_*}}}
\put(95,13){\ps{\eta_k}}
\put(95,23){\ps{\id_g}}
\put(105,3){\ps{\eta_n}}
\put(105,13){\ps{\id_k}}
\put(105,23){\ps{\id_g}}
\put(115,3){\ps{\id_n}}
\put(115,18){\ps{\phi}}
\put(125,8){\ps{\psi}}
\put(125,23){\ps{\id_f}}
\put(135,3){\ps{\id_m}}
\put(135,13){\ps{\id_l}}
\put(135,23){\ps{\epsilon_f}}
\put(145,3){\ps{\id_m}}
\put(145,13){\ps{\epsilon_l}}
\put(145,23){\ps{\Id_{l_*}}}

\end{picture}
$$
$$
\setlength{\unitlength}{.8mm}
\begin{picture}(160,30)

\put(0,13){\ps{=}}
\put(0,10){\ps{{\scriptstyle (2)}}}

\put(30,1){\line(0,1){28}}
\put(40,1){\line(0,1){28}}
\put(50,1){\line(0,1){28}}
\put(60,1){\line(0,1){28}}
\put(70,1){\line(0,1){8}}
\put(70,11){\line(0,1){18}}
\put(21,10){\line(1,0){8}}
\put(21,20){\line(1,0){8}}
\put(31,10){\line(1,0){8}}
\put(31,20){\line(1,0){8}}
\put(41,10){\line(1,0){8}}
\put(51,20){\line(1,0){8}}
\put(61,10){\line(1,0){18}}
\put(61,20){\line(1,0){8}}
\put(71,20){\line(1,0){8}}

\put(25,3){\ps{\Id_{k_*}}}
\put(25,13){\ps{\eta_k}}
\put(25,23){\ps{\id_g}}
\put(35,3){\ps{\eta_n}}
\put(35,13){\ps{\id_k}}
\put(35,23){\ps{\id_g}}
\put(45,3){\ps{\id_n}}
\put(45,18){\ps{\phi}}
\put(55,8){\ps{\psi}}
\put(55,23){\ps{\id_f}}
\put(65,3){\ps{\id_m}}
\put(65,13){\ps{\id_l}}
\put(65,23){\ps{\epsilon_f}}
\put(75,3){\ps{\id_m}}
\put(75,13){\ps{\epsilon_l}}
\put(75,23){\ps{\Id_{l_*}}}

\put(90,13){\ps{=}}
\put(90,10){\ps{{\scriptstyle (3)}}}

\put(110,1){\line(0,1){28}}
\put(120,1){\line(0,1){28}}
\put(130,1){\line(0,1){28}}
\put(140,1){\line(0,1){28}}
\put(150,1){\line(0,1){28}}
\put(101,10){\line(1,0){8}}
\put(101,20){\line(1,0){8}}
\put(111,10){\line(1,0){8}}
\put(111,20){\line(1,0){8}}
\put(121,10){\line(1,0){8}}
\put(131,20){\line(1,0){8}}
\put(141,10){\line(1,0){8}}
\put(141,20){\line(1,0){8}}
\put(151,10){\line(1,0){8}}
\put(151,20){\line(1,0){8}}

\put(105,3){\ps{\Id_{k_*}}}
\put(105,13){\ps{\eta_k}}
\put(105,23){\ps{\id_g}}
\put(115,3){\ps{\eta_n}}
\put(115,13){\ps{\id_k}}
\put(115,23){\ps{\id_g}}
\put(125,3){\ps{\id_n}}
\put(125,18){\ps{\phi}}
\put(135,8){\ps{\psi}}
\put(135,23){\ps{\id_f}}
\put(145,3){\ps{\id_m}}
\put(145,13){\ps{\id_l}}
\put(145,23){\ps{\epsilon_f}}
\put(155,3){\ps{\id_m}}
\put(155,13){\ps{\epsilon_l}}
\put(155,23){\ps{\Id_{l_*}}}

\end{picture}
$$
$$
\setlength{\unitlength}{.8mm}
\begin{picture}(140,30)

\put(0,13){\ps{=}}
\put(0,10){\ps{{\scriptstyle (4)}}}

\put(30,1){\line(0,1){28}}
\put(40,1){\line(0,1){8}}
\put(40,11){\line(0,1){18}}
\put(50,1){\line(0,1){28}}
\put(60,1){\line(0,1){18}}
\put(60,21){\line(0,1){8}}
\put(70,1){\line(0,1){28}}
\put(21,10){\line(1,0){8}}
\put(21,20){\line(1,0){8}}
\put(31,10){\line(1,0){18}}
\put(31,20){\line(1,0){8}}
\put(51,20){\line(1,0){18}}
\put(61,10){\line(1,0){8}}
\put(71,10){\line(1,0){8}}
\put(71,20){\line(1,0){8}}

\put(25,3){\ps{\Id_{k_*}}}
\put(25,13){\ps{\eta_k}}
\put(25,23){\ps{\id_g}}
\put(35,3){\ps{\eta_n}}
\put(35,13){\ps{\id_k}}
\put(35,23){\ps{\id_g}}
\put(45,3){\ps{\id_n}}
\put(45,18){\ps{\phi}}
\put(55,8){\ps{\psi}}
\put(55,23){\ps{\id_f}}
\put(65,3){\ps{\id_m}}
\put(65,13){\ps{\id_l}}
\put(65,23){\ps{\epsilon_f}}
\put(75,3){\ps{\id_m}}
\put(75,13){\ps{\epsilon_l}}
\put(75,23){\ps{\Id_{l_*}}}

\put(90,13){\ps{=}}
\put(90,10){\ps{{\scriptstyle (5)}}}

\put(110,1){\line(0,1){28}}
\put(120,1){\line(0,1){28}}
\put(130,1){\line(0,1){28}}
\put(101,10){\line(1,0){8}}
\put(101,20){\line(1,0){8}}
\put(111,10){\line(1,0){8}}
\put(121,20){\line(1,0){8}}
\put(131,10){\line(1,0){8}}
\put(131,20){\line(1,0){8}}

\put(105,3){\ps{\Id_{k_*}}}
\put(105,13){\ps{\eta_k}}
\put(105,23){\ps{\id_g}}
\put(115,3){\ps{\eta_n}}
\put(115,18){\ps{\phi}}
\put(125,8){\ps{\psi}}
\put(125,23){\ps{\epsilon_f}}
\put(135,3){\ps{\id_m}}
\put(135,13){\ps{\epsilon_l}}
\put(135,23){\ps{\Id_{l_*}}}

\end{picture}
$$
(1) uses functoriality of $ \eta $ and $ \epsilon $. (2) uses the Gray property of $ \chi $ on the left and isotropy of $ \epsilon $ on the right. (3) and (4) use the Gray property again. (5) uses that $ \mu : \dfrac{\id}{\id} \to \id $ is the identity.

On the other hand,
$$
\setlength{\unitlength}{.8mm}
\begin{picture}(160,30)

\put(0,13){\ps{F(\phi)\circ F(\psi)}}
\put(20,13){\ps{=}}

\put(40,11){\line(0,1){18}}
\put(50,11){\line(0,1){18}}
\put(60,1){\line(0,1){28}}
\put(70,1){\line(0,1){18}}
\put(80,1){\line(0,1){18}}

\put(31,10){\line(1,0){28}}
\put(31,20){\line(1,0){8}}
\put(51,20){\line(1,0){8}}
\put(61,10){\line(1,0){8}}
\put(61,20){\line(1,0){28}}
\put(81,10){\line(1,0){8}}

\put(45,3){\ps{\Id_{k_*}}}
\put(35,13){\ps{\eta_k}}
\put(35,23){\ps{\id_g}}
\put(45,18){\ps{\phi}}
\put(55,13){\ps{\id_h}}
\put(55,23){\ps{\epsilon_f}}
\put(65,3){\ps{\eta_n}}
\put(65,13){\ps{\id_h}}
\put(75,8){\ps{\psi}}
\put(75,23){\ps{\Id_{l_*}}}
\put(85,3){\ps{\id_m}}
\put(85,13){\ps{\epsilon_l}}

\end{picture}
$$
$$
\setlength{\unitlength}{.8mm}
\begin{picture}(160,30)

\put(10,13){\ps{=}}

\put(30,1){\line(0,1){8}}
\put(30,11){\line(0,1){18}}
\put(40,1){\line(0,1){8}}
\put(40,11){\line(0,1){18}}
\put(50,1){\line(0,1){28}}
\put(60,1){\line(0,1){18}}
\put(60,21){\line(0,1){8}}
\put(70,1){\line(0,1){18}}
\put(70,21){\line(0,1){8}}

\put(21,10){\line(1,0){28}}
\put(21,20){\line(1,0){8}}
\put(41,20){\line(1,0){8}}
\put(51,10){\line(1,0){8}}
\put(51,20){\line(1,0){28}}
\put(71,10){\line(1,0){8}}

\put(25,3){\ps{\Id_{k_*}}}
\put(25,13){\ps{\eta_k}}
\put(25,23){\ps{\id_g}}
\put(35,3){\ps{\id_\Id}}
\put(35,18){\ps{\phi}}
\put(45,3){\ps{\id_\Id}}
\put(45,13){\ps{\id_h}}
\put(45,23){\ps{\epsilon_f}}
\put(55,3){\ps{\eta_n}}
\put(55,13){\ps{\id_h}}
\put(55,23){\ps{\id_\Id}}
\put(65,8){\ps{\psi}}
\put(65,23){\ps{\id_\Id}}
\put(75,3){\ps{\id_m}}
\put(75,13){\ps{\epsilon_l}}
\put(75,23){\ps{\Id_{k_*}}}

\put(90,13){\ps{=}}
\put(90,10){\ps{\scriptstyle (1)}}

\put(110,1){\line(0,1){28}}
\put(120,1){\line(0,1){28}}
\put(130,1){\line(0,1){28}}
\put(140,1){\line(0,1){28}}
\put(150,1){\line(0,1){28}}

\put(101,10){\line(1,0){8}}
\put(101,20){\line(1,0){8}}
\put(111,10){\line(1,0){8}}
\put(121,10){\line(1,0){8}}
\put(121,20){\line(1,0){8}}
\put(131,10){\line(1,0){8}}
\put(131,20){\line(1,0){8}}
\put(141,20){\line(1,0){8}}
\put(151,10){\line(1,0){8}}
\put(151,20){\line(1,0){8}}

\put(105,3){\ps{\Id_{k_*}}}
\put(105,13){\ps{\eta_k}}
\put(105,23){\ps{\id_g}}
\put(115,3){\ps{\id_\Id}}
\put(115,18){\ps{\phi}}
\put(125,3){\ps{\id_\Id}}
\put(125,13){\ps{\id_h}}
\put(125,23){\ps{\epsilon_f}}
\put(135,3){\ps{\eta_n}}
\put(135,13){\ps{\id_h}}
\put(135,23){\ps{\id_\Id}}
\put(145,8){\ps{\psi}}
\put(145,23){\ps{\id_\Id}}
\put(155,3){\ps{\id_m}}
\put(155,13){\ps{\epsilon_l}}
\put(155,23){\ps{\Id_{l_*}}}

\end{picture}
$$
$$
\setlength{\unitlength}{.8mm}
\begin{picture}(160,30)

\put(10,13){\ps{=}}
\put(10,10){\ps{\scriptstyle (2)}}

\put(30,1){\line(0,1){28}}
\put(40,1){\line(0,1){28}}
\put(50,1){\line(0,1){18}}
\put(50,21){\line(0,1){8}}
\put(60,1){\line(0,1){28}}
\put(70,1){\line(0,1){28}}

\put(21,10){\line(1,0){8}}
\put(21,20){\line(1,0){8}}
\put(31,10){\line(1,0){8}}
\put(41,20){\line(1,0){18}}
\put(51,10){\line(1,0){8}}
\put(61,20){\line(1,0){8}}
\put(71,10){\line(1,0){8}}
\put(71,20){\line(1,0){8}}

\put(25,3){\ps{\Id_{k_*}}}
\put(25,13){\ps{\eta_k}}
\put(25,23){\ps{\id_g}}
\put(35,3){\ps{\id_\Id}}
\put(35,18){\ps{\phi}}
\put(45,8){\ps{\id_h}}
\put(45,23){\ps{\epsilon_f}}
\put(55,3){\ps{\eta_n}}
\put(55,13){\ps{\id_h}}
\put(55,23){\ps{\id_\Id}}
\put(65,8){\ps{\psi}}
\put(65,23){\ps{\id_\Id}}
\put(75,3){\ps{\id_m}}
\put(75,13){\ps{\epsilon_l}}
\put(75,23){\ps{\Id_{l_*}}}

\put(90,13){\ps{=}}

\put(110,1){\line(0,1){28}}
\put(120,1){\line(0,1){28}}
\put(130,1){\line(0,1){18}}
\put(130,21){\line(0,1){8}}
\put(140,1){\line(0,1){28}}
\put(150,1){\line(0,1){28}}

\put(101,10){\line(1,0){8}}
\put(101,20){\line(1,0){8}}
\put(111,10){\line(1,0){8}}
\put(121,10){\line(1,0){8}}
\put(121,20){\line(1,0){18}}
\put(141,20){\line(1,0){8}}
\put(151,10){\line(1,0){8}}
\put(151,20){\line(1,0){8}}

\put(105,3){\ps{\Id_{k_*}}}
\put(105,13){\ps{\eta_k}}
\put(105,23){\ps{\id_g}}
\put(115,3){\ps{\id_\Id}}
\put(115,18){\ps{\phi}}
\put(125,3){\ps{\eta_n}}
\put(125,13){\ps{\id_h}}
\put(125,23){\ps{\id_f}}
\put(135,8){\ps{\id_\frac{h}{n}}}
\put(135,23){\ps{\epsilon_f}}
\put(145,8){\ps{\psi}}
\put(145,23){\ps{\id_\Id}}
\put(155,3){\ps{\id_n}}
\put(155,13){\ps{\epsilon_l}}
\put(155,23){\ps{\Id_{l_*}}}

\end{picture}
$$
$$
\setlength{\unitlength}{.8mm}
\begin{picture}(160,30)

\put(10,13){\ps{=}}
\put(10,10){\ps{\scriptstyle (3)}}

\put(30,1){\line(0,1){28}}
\put(40,1){\line(0,1){28}}
\put(50,1){\line(0,1){28}}
\put(60,1){\line(0,1){28}}
\put(70,1){\line(0,1){28}}

\put(21,10){\line(1,0){8}}
\put(21,20){\line(1,0){8}}
\put(31,10){\line(1,0){8}}
\put(41,10){\line(1,0){8}}
\put(41,20){\line(1,0){8}}
\put(51,20){\line(1,0){8}}
\put(61,20){\line(1,0){8}}
\put(71,10){\line(1,0){8}}
\put(71,20){\line(1,0){8}}

\put(25,3){\ps{\Id_{k_*}}}
\put(25,13){\ps{\eta_k}}
\put(25,23){\ps{\id_g}}
\put(35,3){\ps{\id_\Id}}
\put(35,18){\ps{\phi}}
\put(45,3){\ps{\eta_n}}
\put(45,13){\ps{\id_h}}
\put(45,23){\ps{\id_f}}
\put(55,8){\ps{\id_\frac{h}{n}}}
\put(55,23){\ps{\epsilon_f}}
\put(65,8){\ps{\psi}}
\put(65,23){\ps{\id_\Id}}
\put(75,3){\ps{\id_m}}
\put(75,13){\ps{\epsilon_l}}
\put(75,23){\ps{\Id_{l_*}}}

\put(90,13){\ps{=}}
\put(90,10){\ps{\scriptstyle (4)}}

\put(110,1){\line(0,1){28}}
\put(120,1){\line(0,1){28}}
\put(130,1){\line(0,1){28}}

\put(101,10){\line(1,0){8}}
\put(101,20){\line(1,0){8}}
\put(111,10){\line(1,0){8}}
\put(121,20){\line(1,0){8}}
\put(131,10){\line(1,0){8}}
\put(131,20){\line(1,0){8}}

\put(105,3){\ps{\Id_{k_*}}}
\put(105,13){\ps{\eta_k}}
\put(105,23){\ps{\id_g}}
\put(115,3){\ps{\eta_n}}
\put(115,18){\ps{\phi}}
\put(125,8){\ps{\psi}}
\put(125,23){\ps{\epsilon_f}}
\put(135,3){\ps{\id_n}}
\put(135,13){\ps{\epsilon_l}}
\put(135,23){\ps{\Id_{l_*}}}

\end{picture}
$$
(1) uses the Gray property of $ \chi $ twice on the left and twice on the right. (2) uses isotropy for $ \epsilon $ and the fact that $ \tau $ is an identity so the $ \id $ to the immediate right of $ \epsilon $ is also $ \Id $, and that $ \mu $ is the identity so that $ \frac{\id}{\id} $ is $ \id $. (3) is another application of the Gray property. (4) uses isotropy of $ \eta $ and $ \epsilon $ when again the $\id$'s to the left of $ \eta $ and right of $ \epsilon $ are also $\Id$'s because $ \tau = 1 $.

That $ F $ preserves vertical composition is similar, though easier because the functoriality of $ \eta $ and $\epsilon $ doesn't come into it, so there are fewer cells. It does have to be checked though to make sure that all the identities are in the right places. We leave this as an amusing exercise.

That $ F $ preserves horizontal and vertical composition of cubes is the same except that the diagrams represent cubes rather than basic cells. $ F $ preserves transversal composition of cubes trivially.

It remains only to show that $ F  $ gives a bijection between quintets and cells in $ {\ic B} $.

For any cell
$$
\bfig\scalefactor{.7}

\square[A `B `C `D;f_* `g `h `k_*]

\place(250,250)[\scriptstyle \psi]

\efig
$$
define
$$
\setlength{\unitlength}{.8mm}
\begin{picture}(40,30)

\put(0,11){\ps{G(\psi)}}
\put(14,11){\ps{=}}

\put(25,8){\line(1,0){8}}
\put(25,16){\line(1,0){8}}

\put(29,3){\ps{\epsilon_k}}
\put(29,11){\ps{\psi}}
\put(29,19){\ps{\eta_f}}

\end{picture}
$$
$ G $ is defined on cubes by the same formula.
$$
\setlength{\unitlength}{.8mm}
\begin{picture}(160,40)

\put(0,18){\ps{GF(\phi)}}
\put(18,18){\ps{=}}

\put(40,1){\line(0,1){8}}
\put(40,11){\line(0,1){18}}
\put(50,11){\line(0,1){18}}
\put(50,31){\line(0,1){8}}

\put(31,10){\line(1,0){28}}
\put(31,20){\line(1,0){8}}
\put(31,30){\line(1,0){28}}
\put(51,20){\line(1,0){8}}

\put(35,3){\ps{\epsilon_k}}
\put(35,13){\ps{\eta_k}}
\put(35,23){\ps{\id_g}}
\put(40,33){\ps{\id_\Id}}
\put(45,18){\ps{\phi}}
\put(50,3){\ps{\id_\Id}}
\put(55,13){\ps{\id_h}}
\put(55,23){\ps{\epsilon_f}}
\put(55,33){\ps{\eta_f}}

\put(70,18){\ps{=}}
\put(70,15){\ps{\scriptstyle (1)}}

\put(90,1){\line(0,1){8}}
\put(90,11){\line(0,1){18}}
\put(100,11){\line(0,1){28}}

\put(81,10){\line(1,0){28}}
\put(81,20){\line(1,0){8}}
\put(81,30){\line(1,0){18}}
\put(101,20){\line(1,0){8}}
\put(101,30){\line(1,0){8}}

\put(85,3){\ps{\epsilon_k}}
\put(85,13){\ps{\eta_k}}
\put(85,23){\ps{\id_g}}
\put(90,33){\ps{\id_\Id}}
\put(95,18){\ps{\phi}}
\put(100,3){\ps{\id_\Id}}
\put(105,13){\ps{\id_h}}
\put(105,23){\ps{\epsilon_f}}
\put(105,33){\ps{\eta_f}}

\put(120,18){\ps{=}}
\put(120,15){\ps{\scriptstyle (2)}}

\put(140,1){\line(0,1){8}}
\put(140,11){\line(0,1){18}}
\put(150,11){\line(0,1){28}}

\put(131,10){\line(1,0){28}}
\put(131,20){\line(1,0){8}}
\put(131,30){\line(1,0){18}}
\put(151,20){\line(1,0){8}}

\put(135,3){\ps{\epsilon_k}}
\put(135,13){\ps{\eta_k}}
\put(135,23){\ps{\id_g}}
\put(140,33){\ps{\id_\Id}}
\put(145,18){\ps{\phi}}
\put(150,3){\ps{\id_\Id}}
\put(155,13){\ps{\id_h}}
\put(155,28){\ps{\id_f}}

\end{picture}
$$
$$
\setlength{\unitlength}{.8mm}
\begin{picture}(160,40)

\put(0,18){\ps{=}}
\put(0,15){\ps{\scriptstyle (3)}}

\put(20,1){\line(0,1){8}}
\put(20,11){\line(0,1){18}}

\put(11,10){\line(1,0){18}}
\put(11,20){\line(1,0){8}}
\put(11,30){\line(1,0){18}}

\put(15,3){\ps{\epsilon_k}}
\put(15,13){\ps{\eta_k}}
\put(15,23){\ps{\id_g}}
\put(20,33){\ps{\id_\Id}}
\put(25,3){\ps{\id_\Id}}
\put(25,18){\ps{\phi}}

\put(40,18){\ps{=}}
\put(40,15){\ps{\scriptstyle (4)}}

\put(60,1){\line(0,1){28}}

\put(51,10){\line(1,0){8}}
\put(51,20){\line(1,0){8}}
\put(51,30){\line(1,0){18}}
\put(61,10){\line(1,0){8}}

\put(55,3){\ps{\epsilon_k}}
\put(55,13){\ps{\eta_k}}
\put(55,23){\ps{\id_g}}
\put(60,33){\ps{\id_\Id}}
\put(65,3){\ps{\id_\Id}}
\put(65,18){\ps{\phi}}

\put(80,18){\ps{=}}
\put(80,15){\ps{\scriptstyle (5)}}

\put(100,1){\line(0,1){28}}

\put(91,20){\line(1,0){8}}
\put(91,30){\line(1,0){18}}
\put(101,10){\line(1,0){8}}

\put(95,8){\ps{\id_k}}
\put(95,23){\ps{\id_g}}
\put(100,33){\ps{\id_\Id}}
\put(105,3){\ps{\id_\Id}}
\put(105,18){\ps{\phi}}

\put(120,18){\ps{=}}
\put(120,15){\ps{\scriptstyle (6)}}

\put(131,16){\line(1,0){8}}
\put(131,24){\line(1,0){8}}

\put(135,9){\ps{\id_\Id}}
\put(135,18){\ps{\phi}}
\put(135,27){\ps{\id_\Id}}

\put(150,18){\ps{=}}
\put(150,15){\ps{\scriptstyle (7)}}

\put(160,18){\ps{\phi}}

\end{picture}
$$
Equations (1) and (4) use the Gray property, (2) and (5) the companion equations, (3) and (6) use that $ \mu $ is the identity, and (7) that $ \tau $ is the identity.

The proof that $ FG(\psi) = \psi $ is similar.

The proofs that $ F $ and $ G $ are inverse on cubes is completely similar except that the diagrams now represent cubes rather than basic cells.

\end{proof}

A close examination of the proofs reveals that we only used the Gray condition for $ {\ic B} $ in the form
$$
 \chi \left(\bfig\square/```/<100,120>[{\scriptstyle \id}`*`*`*;```]\efig\right) = 1 \ \ \ \ \mbox{and}\ \ \ \ \ \ \chi \left(\bfig\square/```/<100,120>[*`*`*`{\scriptstyle \id};```]\efig\right) = 1 .
$$
Yet, as $ {\ic B} $ is of the form $ {\ic A}_s $, it also satisfies the corresponding conditions with $ \Id $ instead of $ \id $. Thus we get the following result.

\begin{corollary} In the presence of conditions (a), (b), (c), (3d), $ {\ic B} $ satisfies the Gray condition if and only if

\noindent
$$ \chi \left(\bfig\square/```/<100,120>[{\scriptstyle \id}`*`*`*;```]\efig\right) = 1 \ \ \ \ \mbox{and}\ \ \ \ \ \ \chi \left(\bfig\square/```/<100,120>[*`*`*`{\scriptstyle \id};```]\efig\right) = 1
$$
or equivalently
$$
\chi \left(\bfig\square/```/<100,120>[{\scriptstyle \Id}`*`*`*;```]\efig\right) = 1 \ \ \ \ \mbox{and}\ \ \ \ \ \ \chi \left(\bfig\square/```/<100,120>[*`*`*`{\scriptstyle \Id};```]\efig\right) = 1. 
$$
\end{corollary}

\begin{remark} In condition (1e) above, the $\Id$'s appear to be in the wrong place. This has to do with the following fact. If we start with a $2$-category $ {\cal A} $ and construct the double category of quintets $ {\mathbb Q}{\cal A} $
$$
\bfig\scalefactor{.7}

\square[A `B `C `D;l `g `h `k]

\morphism(230,220)/<=/<0,175>[`;{\scriptstyle \alpha}]

\efig
$$
and then take the horizontal $2$-category of $ {\mathbb Q}{\cal A} $ (vertical arrows identities) we get $ {\cal A} $ back. But if we take the vertical $2$-category of $ {\mathbb Q}{\cal A} $ we get $ {\cal A}^{co} $, i.e. the $2$-cells are reversed. Taking co-quintets we get the opposite situation. Starting with Gray's conventions leads naturally to co-quintets. So that $ {\ic A}_s $ corresponds directly to $ {\ic A}_c $ but corresponds to $ {\ic A}^{co}_l $, i.e. $ {\ic A}_l $ with the vertical direction reversed. This switches the two rows in $ \chi $ and explains the positioning of $ \Id $ in (1e).
\end{remark}

\section{Spans in double categories} \label{spans}

\subsection{The intercategory of spans in a double category}\label{spans.1} 

\ 

Let $ {\mathbb A} $ be a (weak) double category with a lax choice of pullbacks, i.e.\ the diagonal functor
$$
\Delta : {\mathbb A} \to {\mathbb A}^{\bf P}
$$
has a (lax) right adjoint \cite{GP}, where $ {\bf P} $ is the category
$$
\bfig\scalefactor{.7}

\Dtriangle/`>`<-/<300,200>[1`0`2;``]

\efig
$$

In elementary terms, this means the following. Let $ {\mathbb A} = {\bf A}_2\  \threepppp/>`>`>/<400>^{p_1} |{m} _{p_2} \ {\bf A}_1\  \three/>`<-`>/^{\partial_0} | {\id}_{\partial_1} \ {\bf A}_0 $. Then $ {\bf A}_0 $ and $ {\bf A}_1 $ have pullbacks preserved by $ \partial_0 $ and $ \partial_1 $. Furthermore a choice of pullback has been made also preserved by $ \partial_0 $ and $ \partial_1 $. So a chosen pullback in $ {\bf A}_1 $ will look like

$$
\bfig\scalefactor{.8}

\square/>`@{>}|{\bb}`@{>}|{\bb}`>/[A\times_C B ` B`\ov{A}\times_{\ov{C}} \ov{B}`\ov{B};`v\times_z w`w`]

\morphism(0,500)/>/<250,250>[A \times_C B`A;]

\morphism(500,500)/>/<250,250>[B`C;]

\morphism(250,750)/>/<500,0>[A`C;]

\morphism(750,750)|r|/@{>}|{\bb}/<0,-500>[C`\ov{C};z]

\morphism(500,0)|r|/>/<250,250>[\ov{B}`\ov{C};]

\place(400,600)[\scriptstyle \alpha]

\place(250,250)[\scriptstyle \pi_2]

\morphism(250,750)|r|/-/<0,-200>[A`;]

\place(625,400)[\scriptstyle \beta]

\efig
$$
The point of choosing pullbacks is not a question of the axiom of choice, which we use unashamedly, but of choosing them compatibly with $ \partial_0, \partial_1 $. Mere preservation of the pullbacks by $ \partial_0 $ and $ \partial_1 $ doesn't imply that we can make such a choice, but under weak conditions on $ {\mathbb A} $, e.g. every horizontal isomorphism has a companion, it is possible to do so.

Given such a compatible choice, it follows that $ {\bf A}_2 = {\bf A}_1 \times_{{\bf A}_0} {\bf A}_1 $ has a choice of pullbacks compatible with $ p_1 $ and $p_2 $, i.e.\ compatible pairs of chosen pullbacks in $ {\bf A}_1 $ give a pullback in $ {\bf A}_2 $. We are not assuming that $ m : {\bf A}_2 \to {\bf A}_1 $ preserves pullbacks, but there is always a universally given comparison cell
$$
\bfig\scalefactor{.7}

\square/`@{>}|{\bb}``=/<700,500>[\ov{A}\times_{\ov{c}} \ov{B} ``\ovv{A}\times_{\ov{c}} \ovv{B}`\ovv{A}\times_{\ovv{c}}\ovv{B};`\ov{v}\times_{\ov{z}} \ov{w}``]

\square(0,500)/=`@{>}|{\bb}``/<700,500>[A \times_C B ` A \times_C B`\ov{A} \times_{\ov{c}} \ov{B}`;`v\times_z w``]

\square(0,0)/=``@{>}|{\bb}`=/<700,1000>[A\times_C B `A \times_C B`\ovv{A}\times_{\ov{c}} \ovv{B}`\ovv{A}\times_{\ovv{c}}\ovv{B};``(v\cdot \ov{v})\times_{(z\cdot\ov{z})} (w\cdot \ov{w})`]

\place(350,500)[\scriptstyle \gamma]

\efig
$$
So if we do have a compatible choice of pullbacks it's always lax, as with any right adjoint. The word ``lax'' in ``lax choice of pullback'' is just to emphasize that it is not strong. If $ \id : {\bf A}_0 \to {\bf A}_1 $ preserves pullbacks (not necessarily the chosen ones) we say that $ {\mathbb A} $ has a lax {\em normal} choice of pullbacks. This is almost always the case and it is a weak condition to impose. (This is a special case of what we had called a lax functorial choice of $ {\mathbb I} $-limits in \cite{GP99} when our understanding of the notion was still evolving.) On the other hand, $ m : {\bf A}_2 \to {\bf A}_1 $ is just as likely to preserve pullbacks as not. If $ \id $ and $ m $ preserve them we say we have a strong choice of pullback.

Assume now that $ {\mathbb A} $ has a lax choice of pullbacks. Let $ {\bf \Lambda} $ be the category
$$ 0 \leftarrow 2 \rightarrow 1.
$$
Then $ {\mathbb A}^{\bf \Lambda} $ is the double category whose objects are spans of horizontal arrows
$$
A_0 \leftarrow A_2 \rightarrow A_1,
$$
whose horizontal arrows are commutative diagrams of horizontal arrows
$$
\bfig\scalefactor{.7}

\square/<-`>`>`<-/[A_0 `A_2 `B_0 `B_2;```]

\square(500,0)/>``>`>/[A_2`A_1`B_2`B_1;```]

\place(1650,250)[(a)]

\place(-650,250)[\ ]

\efig
$$
whose vertical arrows are spans of cells
$$
\bfig\scalefactor{.7}

\square/<-`@{>}|{\bb}`@{>}|{\bb}`<-/[A_0`A_2`\ov{A_0} `\ov{A_2};```]

\square(500,0)/>``@{>}|{\bb}`>/[A_2`A_1`\ov{A_2} `\ov{A_1};```]

\place(250,250)[\scriptstyle \alpha_0]

\place(750,250)[\scriptstyle \alpha_1]

\place(1650,250)[(b)]

\place(-650,250)[\ ]

\efig
$$
and whose double cells are commutative diagrams of cells
$$
\bfig\scalefactor{.7}

\square(0,300)/<-`@{>}|{\bb}``/[A_0`A_2`\ov{A_0}`;```]

\square(300,0)/<-`@{>}|{\bb}`@{>}|{\bb}`<-/[B_0`B_2`\ov{B_0}`\ov{B_2};```]

\morphism(0,800)|b|<300,-300>[A_0`B_0;]

\morphism(500,800)<300,-300>[A_2 ` B_2;]

\morphism(0,300)|b|<300,-300>[\ov{A_0}`\ov{B_0};]

\morphism(500,800)<500,0>[A_2`A_1;]

\morphism(1000,800)<300,-300>[A_1`B_1;]

\square(800,0)/>`@{>}|{\bb}`@{>}|{\bb}`>/[B_2`B_1`\ov{B_2}`\ov{B_1};```]

\place(550,250)[\scriptstyle \beta_0]

\place(1050,250)[\scriptstyle \beta_1]

\place(150,400)[\scriptstyle \phi_0]

\place(290,700)[\scriptstyle \alpha_0]

\place(600,600)[\scriptstyle \phi_2]

\morphism(500,780)/-/<0,-250>[`;]

\place(800,700)[\scriptstyle \alpha_1]

\place(1100,600)[\scriptstyle \phi_1]

\morphism(1000,780)/-/<0,-250>[`;]

\place(1800,250)[(c)]

\place(-500,250)[\ ]

\efig
$$
That is
$$ 
{\mathbb A}^{\bf \Lambda} = {\bf A}_2^{\bf \Lambda} \ \threepppp/>`>`>/<400>^{} |{} _{} \  {\bf A}_1^{\bf \Lambda} \   \three/>`<-`>/^{} | {}_{}\ {\bf A}_0^{\bf \Lambda}
$$
We have strict double functors $ \partial_0, \partial_1 : {\mathbb A}^{\bf \Lambda} \to {\mathbb A} $. $ \partial_0 $ picks out the $ 0 $ part of the diagram, and $ \partial_1 $ the $1 $ part. They are induced by the corresponding functors $ {\mathbbm 1} \two^{\ulcorner 0\urcorner}_{\ulcorner 1\urcorner} {\bf \Lambda} $. We also have a strict double functor $\id : {\mathbb A} \to {\mathbb A}^{\bf \Lambda} $ coming from $ {\bf \Lambda} \to {\mathbbm 1} $.

The pullback $ {\mathbb A}^{\bf \Lambda} \times_{\mathbb A} {\mathbb A}^{\bf \Lambda} $ is $ {\mathbb A}^{\bf M} $ where $ {\bf M} $ is the category
$$
0 \leftarrow 3 \rightarrow 1 \leftarrow 4 \rightarrow 2.
$$
The pullback (lax) functor $ {\mathbb A}^{\bf P} \to {\mathbb A} $ induces a lax functor $ m : {\mathbb A}^{\bf M} \to {\mathbb A}^{\bf \Lambda} $ and produces a pseudocategory
$$
{\mathbb A}^{\bf M} \ \threepppp/>`>`>/<400>^{p_1} |{m} _{p_2} \ {\mathbb A}^{\bf \Lambda} \three/>`<-`>/^{\partial_0} | {\id}_{\partial_1}\  {\mathbb A}
$$
in $ \Dlax $. In this way we get an intercategory, that we call ${\ic Span} ({\mathbb A}) $. As a double pseudocategory in ${\cal CAT} $, it looks like
$$
\bfig\scalefactor{.7}

\square/>`<-`<-`>/[{\bf A}_1^{\bf M} `{\bf A}_1^{\bf \Lambda}`{\bf A}_2^{\bf M} `{\bf A}_2^{\bf \Lambda};```]

\square|allb|/@{>}@<-3pt>`@{<-}@<-3pt>`@{<-}@<-3pt>`@{>}@<-3pt>/[{\bf A}_1^{\bf M} `{\bf A}_1^{\bf \Lambda}`{\bf A}_2^{\bf M} `{\bf A}_2^{\bf \Lambda};```]

\square|allb|/@{>}@<3pt>`@{<-}@<3pt>`@{<-}@<3pt>`@{>}@<3pt>/[{\bf A}_1^{\bf M} `{\bf A}_1^{\bf \Lambda}`{\bf A}_2^{\bf M} `{\bf A}_2^{\bf \Lambda};```]

\square(0,500)/>`>`>`>/[{\bf A}_0^{\bf M}`{\bf A}_0^{\bf \Lambda}`{\bf A}_1^{\bf M}`{\bf A}_1^{\bf \Lambda};```]

\square(0,500)|allb|/@{>}@<-3pt>`@{<-}@<-3pt>`@{<-}@<-3pt>`@{>}@<-3pt>/[{\bf A}_0^{\bf M}`{\bf A}_0^{\bf \Lambda}`{\bf A}_1^{\bf M}`{\bf A}_1^{\bf \Lambda};```]

\square(0,500)|allb|/@{>}@<3pt>`@{<-}@<3pt>`@{<-}@<3pt>`@{>}@<3pt>/[{\bf A}_0^{\bf M}`{\bf A}_0^{\bf \Lambda}`{\bf A}_1^{\bf M}`{\bf A}_1^{\bf \Lambda};```]

\square(500,0)/<-`<-`<-`<-/[{\bf A}_1^{\bf \Lambda} `{\bf A}_1`{\bf A}_2^{\bf \Lambda} `{\bf A}_2;```]

\square(500,0)|allb|/@{>}@<-3pt>`@{<-}@<-3pt>`@{<-}@<-3pt>`@{>}@<-3pt>/[{\bf A}_1^{\bf \Lambda} `{\bf A}_1`{\bf A}_2^{\bf \Lambda} `{\bf A}_2;```]

\square(500,0)|allb|/@{>}@<3pt>`@{<-}@<3pt>`@{<-}@<3pt>`@{>}@<3pt>/[{\bf A}_1^{\bf \Lambda} `{\bf A}_1`{\bf A}_2^{\bf \Lambda} `{\bf A}_2;```]

\square(500,500)/<-`>`>`<-/[{\bf A}_0^{\bf \Lambda} `{\bf A}_0`{\bf A}_1^{\bf \Lambda}`{\bf A}_1;```]

\square(500,500)|allb|/@{>}@<-3pt>`@{<-}@<-3pt>`@{<-}@<-3pt>`@{>}@<-3pt>/[{\bf A}_0^{\bf \Lambda} `{\bf A}_0`{\bf A}_1^{\bf \Lambda}`{\bf A}_1;```]

\square(500,500)|allb|/@{>}@<3pt>`@{<-}@<3pt>`@{<-}@<3pt>`@{>}@<3pt>/[{\bf A}_0^{\bf \Lambda} `{\bf A}_0`{\bf A}_1^{\bf \Lambda}`{\bf A}_1;```]

\efig
$$
whose rows are the double categories $\SSpan{\bf A}_0 $, $\SSpan{\bf A}_1 $ and $\SSpan{\bf A}_2 $ of \cite{DPP}. Recall from there that an arbitrary functor $ F : {\bf B} \to {\bf C} $ between categories with pullbacks induces a colax normal functor
$$
\SSpan F : \SSpan {\bf B} \to \SSpan{\bf C}.
$$
If $ F $ preserves pullbacks, then $ \SSpan F $ is a strong functor. To make $ \SSpan{\bf B} $ and $ \SSpan{\bf C} $ into double categories, a choice of pullback must be made (to define vertical composition). If $ F $ preserves these choices, then $ \SSpan F $ is a strict functor.

In this way we get the alternative description of ${\ic Span}({\mathbb A}) $ as a pseudocategory object
$$
\SSpan {\bf A}_2\  \threepppp/>`>`>/<400>^{} |{} _{} \ \SSpan {\bf A}_1 \ \three/>`<-`>/^{} | {}_{} \ \SSpan {\bf A}_0
$$
in $ \Dcolax $.

Referring to the table of Section 4 of \cite{ICI} we get a more detailed description of $ {\ic Span} ({\mathbb A}) $. Its

\noindent (1) objects are those of $ {\mathbb A} $,

\noindent (2) transversal arrows are the horizontal morphisms of $ {\mathbb A} $,

\noindent (3) horizontal arrows are spans of horizontal morphisms of $ {\mathbb A} $,

\noindent (4) vertical arrows are the vertical morphisms of $ {\mathbb A} $,

\noindent (5) horizontal cells are commutative diagrams as in (a) above,

\noindent (6) vertical cells are the double cells of $ {\mathbb A} $,

\noindent (7) basic cells are spans of double cells as in (b) above,

\noindent (8) cubes are commutative diagrams of double cells as in (c).

Compositions are obvious, either coming from $ {\mathbb A} $ or the composition of spans $ \otimes $. To see how interchange works, consider the following diagram in $ {\mathbb A} $:
$$
\bfig\scalefactor{.9}

\square/<-`@{>}|{\bb}`@{>}|{\bb}`<-/<350,350>[```;`x_0`x_1`]

\place(175,175)[\scriptstyle \ov{\alpha}_0]

\square(0,350)/<-`@{>}|{\bb}`@{>}|{\bb}`<-/<350,350>[```;`v_0`v_1`]

\place(175,525)[\scriptstyle \alpha_0]

\square(350,0)/>``@{>}|{\bb}`>/<350,350>[```;``x_2`]

\place(525,175)[\scriptstyle \ov{\alpha}_1]

\square(350,350)/>``@{>}|{\bb}`/<350,350>[```;``v_2`]

\place(525,525)[\scriptstyle \alpha_1]

\square(700,0)/<-``@{>}|{\bb}`<-/<350,350>[```;``y_1`]

\place(875,175)[\scriptstyle \ov{\beta}_0]

\square(700,350)/<-``@{>}|{\bb}`/<350,350>[```;``w_1`]

\place(875,525)[\scriptstyle \beta_0]

\square(1050,0)/>``@{>}|{\bb}`>/<350,350>[```;``y_2`]

\place(1225,175)[\scriptstyle \ov{\beta}_1]

\square(1050,350)/>``@{>}|{\bb}`/<350,350>[```;``w_2`]

\place(1225,525)[\scriptstyle \beta_1]

\efig
$$
To calculate $ \dfrac{\alpha | \beta}{\ov{\alpha} | \ov{\beta}} $ we take the pullbacks
$$
\bfig\scalefactor{.7}

\square/>`@{>}|{\bb}`@{>}|{\bb}`>/[```;`v_1\times_{v_2} w_1`w_1`]

\morphism(0,500)/>/<250,250>[`;]

\morphism(500,500)/>/<250,250>[`;]

\morphism(250,750)/>/<500,0>[`;]

\morphism(750,750)|r|/@{>}|{\bb}/<0,-500>[`;v_2]

\morphism(500,0)/>/<250,250>[`;]

\place(400,640)[\scriptstyle \alpha_1]

\place(250,250)[\scriptstyle \pi_2]

\morphism(250,750)|r|/-/<0,-200>[`;]


\place(650,350)[\scriptstyle \beta_0]

\place(1200,300)[\mbox{and}]

\square(1800,0)/>`@{>}|{\bb}`@{>}|{\bb}`>/[```;`x_1\times_{x_2} y_1`y_1`]

\morphism(1800,500)/>/<250,250>[`;]

\morphism(2300,500)/>/<250,250>[`;]

\morphism(2050,750)/>/<500,0>[`;]

\morphism(2550,750)|r|/@{>}|{\bb}/<0,-500>[`;x_2]

\morphism(2300,0)/>/<250,250>[`;]

\place(2450,350)[\scriptstyle \ov{\beta_0}]

\place(2080,250)[\scriptstyle \pi'_2]

\place(2200,640)[\scriptstyle \ov{\alpha_1}]


\morphism(2050,750)/-/<0,-200>[`;]

\efig
$$
and then compose the left half and right halves of
$$
\bfig\scalefactor{.7}

\square/<-`@{>}|{\bb}`@{>}|{\bb}`<-/<350,350>[```;```]

\place(175,175)[\scriptstyle \ov{\alpha}_0]

\square(0,350)/<-`@{>}|{\bb}`@{>}|{\bb}`<-/<350,350>[```;```]

\place(175,525)[\scriptstyle \alpha_0]

\square(350,0)/<-``@{>}|{\bb}`<-/<350,350>[```;```]

\place(525,175)[\scriptstyle \pi'_1]

\square(350,350)/<-``@{>}|{\bb}`/<350,350>[```;```]

\place(525,525)[\scriptstyle \pi_1]

\square(700,0)/>``@{>}|{\bb}`>/<350,350>[```;```]

\place(875,175)[\scriptstyle \pi'_2]

\square(700,350)/>``@{>}|{\bb}`/<350,350>[```;```]

\place(875,525)[\scriptstyle \pi_2]

\square(1050,0)/>`@{>}|{\bb}`@{>}|{\bb}`>/<350,350>[```;```]

\place(1225,175)[\scriptstyle \ov{\beta}_1]

\square(1050,350)/>`@{>}|{\bb}`@{>}|{\bb}`/<350,350>[```;```]

\place(1225,525)[\scriptstyle \beta_1]

\efig
$$
whereas to calculate $ \left.\dfrac{\alpha}{\ov{\alpha}}\right|\dfrac{\beta}{\ov{\beta}} $ we take the pullback
$$
\bfig\scalefactor{.9}

\square/>`@{>}|{\bb}`@{>}|{\bb}`>/[```;`(v_1 \bullet x_1)\times_{v_2\bullet x_2} (w_1\bullet y_1)``]

\morphism(0,500)/>/<250,250>[`;]

\morphism(500,500)/>/<250,250>[`;]

\morphism(250,750)/>/<500,0>[`;]

\morphism(750,750)|r|/@{>}|{\bb}/<0,-500>[`;v_2\bullet x_2]

\morphism(500,0)/>/<250,250>[`;]


\place(250,250)[\scriptstyle \pi''_2]

\morphism(250,750)|r|/-/<0,-200>[`;]

\place(620,300)[\scriptstyle w_1\bullet y_1]

\efig
$$
and then compose left and right parts of
$$
\bfig\scalefactor{.7}

\square/<-`@{>}|{\bb}`@{>}|{\bb}`<-/<350,350>[```;```]

\place(175,175)[\scriptstyle \ov{\alpha}_0]

\square(0,350)/<-`@{>}|{\bb}`@{>}|{\bb}`<-/<350,350>[```;```]

\place(175,525)[\scriptstyle \alpha_0]

\square(350,0)/<-``>`<-/<350,700>[```;```]

\place(525,350)[\scriptstyle \pi''_1]

\square(700,0)/>```>/<350,700>[```;```]

\place(875,350)[\scriptstyle \pi''_2]

\square(1050,0)/>`@{>}|{\bb}`@{>}|{\bb}`>/<350,350>[```;```]

\place(1225,175)[\scriptstyle \ov{\beta}_1]

\square(1050,350)/>`@{>}|{\bb}`@{>}|{\bb}`/<350,350>[```;```]

\place(1225,525)[\scriptstyle \beta_1]

\efig
$$
The comparison
$$
\gamma : (v_1 \times_{v_2} w_1) \bullet (x_1 \times_{x_2} y_1) \to (v_1 \bullet x_1)\times_{v_2\bullet x_2} (w_1 \bullet y_1)
$$
gives a morphism of spans in $ {\bf A}_2 $ which is
$$
\chi : \dfrac{\alpha | \beta}{\ov{\alpha} | \ov{\beta}} \to \left.\dfrac{\alpha}{\ov{\alpha}}\right|\dfrac{\beta}{\ov{\beta}}.
$$
For the degenerate interchangers $ \mu,\delta, \tau $ we have the following. The horizontal identity $ \id_v $ is
$$
\bfig\scalefactor{.7}

\square/<-`@{>}|{\bb}`@{>}|{\bb}`<-/[A`A`\ov{A}`\ov{A};1`v`v`1]

\place(250,250)[\scriptstyle 1_v]

\square(500,0)/>``@{>}|{\bb}`>/[A`A`\ov{A}`\ov{A};1``v`1]

\place(750,250)[\scriptstyle 1_v]

\efig
$$
and $ \id_v \bullet \id_\ov{v} = \id_{v\bullet \ov{v}} $ so $\mu $ is the identity. The vertical identity $ \Id_f $ is
$$
\bfig\scalefactor{.7}

\square/<-`@{>}|{\bb}`@{>}|{\bb}`<-/[A_0 `A_2`A_0`A_2;f_o`\id `\id`f_0]

\place(250,250)[\scriptstyle \id_{f_0}]

\square(500,0)/>``@{>}|{\bb}`>/[A_2`A_1`A_2`A_1;f_1``\id`f_1]

\place(770,250)[\scriptstyle \id_{f_1}]

\efig
$$
and horizontal composition of two of these is done by taking the pullback of $ \id_{f_1} $ with $ \id_{g_0} $. If pullbacks in $ {\mathbb A} $ are not normal we get a nontrivial comparison
$$
\id_{A_2 \times_{A_1} B_2} \to \id_{A_2} \times_{\id_{A_1}} \id_{g_2}
$$
which gives a nontrivial
$$
\delta : \Id_{f\otimes g} \to \Id_f \otimes \Id_g.
$$
Finally $ \id_{\Id_A} $ is
$$
\bfig\scalefactor{.7}

\square/<-`@{>}|{\bb}`@{>}|{\bb}`<-/[A`A`A`A;1`\id`\id`1]

\place(250,250)[\scriptstyle 1_{\id_A}]

\square(500,0)/>``@{>}|{\bb}`>/[A`A`A`A;1``\id`1]

\place(750,250)[\scriptstyle 1_{\id_A}]

\efig
$$
and $ \Id_\id $ is
$$
\bfig\scalefactor{.7}

\square/<-`@{>}|{\bb}`@{>}|{\bb}`<-/[A`A`A`A;1`\id`\id`1]

\place(250,250)[\scriptstyle \id_{1_A}]

\square(500,0)/>``@{>}|{\bb}`>/[A`A`A`A;1``\id`1]

\place(750,250)[\scriptstyle \id_{1_A}]

\efig
$$
so they are equal and $ \tau : \Id_{\id_A} \to \id_{\Id_A} $ is the identity.

If $ {\mathbb A} $ and $ {\mathbb B} $ have a lax choice of pullbacks and $ F : {\mathbb A} \to {\mathbb B} $ is a lax (colax, pseudo) double functor (not necessarily preserving the pullbacks) then we get a colax-lax (resp. colax-colax, colax-pseudo) morphism of intercategories
$$
{\ic Span} (F) : {\ic Span} ({\mathbb A}) \to {\ic Span} ({\mathbb B})
$$
by applying $ F $ component-wise. The commutativity conditions in (a) and (c) above involve only horizontal composition which is strictly preserved by $ F $. If $ F $ preserves pullbacks then $ {\ic Span} (F) $ will be pseudo-lax (resp. pseudo-colax, pseudo-pseudo).

\subsection{Double spans}\label{spans.2}

\ 

We now consider some specific examples. The first is the double category $ \SSpan {\bf A} $ for a category with pullbacks. It has a strong choice of pullbacks. ${\ic Span}({\mathbb S}{\rm pan} {\bf A}) $ is an important construction and deserves a special name ${\ic Span^2}{\bf A} $. A general cube looks like
$$
\bfig\scalefactor{.5}

\square/<-`>`>`<-/[\cdot `\cdot `\cdot `\cdot;```]

\square(500,0)[\cdot`\cdot`\cdot`\cdot;```]

\square(0,500)/<-`<-`<-`<-/[\cdot `\cdot `\cdot `\cdot;```]

\square(500,500)/>`<-`<-`>/[\cdot `\cdot `\cdot `\cdot;```]

\morphism(0,1250)/>/<-500,0>[\cdot`\cdot;]

\morphism(-500,1250)/>/<500,-250>[\cdot`\cdot;]

\morphism(0,1250)/>/<500,0>[\cdot`\cdot;]

\morphism(0,1250)/>/<500,-250>[\cdot`\cdot;]

\morphism(500,1250)/>/<500,-250>[\cdot`\cdot;]

\morphism(-500,1250)/<-/<0,-500>[\cdot`\cdot;]

\morphism(-500,750)/>/<500,-250>[`\cdot;]

\morphism(-500,750)/>/<0,-500>[`\cdot;]

\morphism(-500,250)/>/<500,-250>[\cdot`\cdot;]

\efig
$$
the front and back, the basic cells, being spans of spans. All the interchangers are isomorphisms.

If $ {\bf B} $ is another category with pullbacks and $ F : {\bf A} \to {\bf B} $ an arbitrary functor, then we get a colax-colax functor ${\ic Span^2} F : {\ic Span^2} {\bf A} \to {\ic Span^2} {\bf B} $. If $ U : {\bf B} \to {\bf A} $ is right adjoint to $ F $, then $ U $ preserves pullbacks and induces a pseudo-pseudo functor $ {\ic Span^2} U : {\ic Span^2} {\bf B} \to {\ic Span^2} {\bf A} $. We can consider it as a colax-lax functor and as such it is a conjoint to $ {\ic Span^2} F $ in $ {\ic ICat} $.

\subsection{Matrices in a monoidal category revisited}\label{spans.3}

\

A related example is the intercategory $ {\ic SM}({\bf V}) $ of Section \ref{duoidal}. For a monoidal category $({\bf V}, \otimes, I) $ with coproducts over which $ \otimes $ distributes, we have the double category $ {\bf V}$-${\mathbb S}{\rm et} $, introduced in \cite{P}, of sets, functions and $ {\bf V}$-matrices. If $ {\bf V} $ has pullbacks, then $ {\bf V}$-${\mathbb S}{\rm et} $ has a lax choice of pullbacks and $ {\ic Span}({\bf V}\mbox{-}{\mathbb S}{\rm et}) $ is $ {\ic SM}({\bf V}) $. 

\subsection{Spans of cospans}\label{spans.4}

\ 

An equally interesting example is the following. Let $ {\bf A} $ be a category with pullbacks and pushouts. Then the double category of cospans $ {\mathbb C}{\rm osp} {\bf A} $ has a lax (normal) choice of pullbacks. This gives an intercategory ${\ic Span} ({\mathbb C}{\rm osp} {\bf A}) $ which we will call ${\ic SpanCosp} {\bf A} $. A basic cell is a span of cospans and a general cube is a commutative diagram
$$
\bfig\scalefactor{.5}

\square/<-`<-`<-`<-/[\cdot `\cdot `\cdot `\cdot;```]

\square(500,0)/>`<-`<-`>/[\cdot`\cdot`\cdot`\cdot;```]

\square(0,500)/<-`>`>`<-/[\cdot `\cdot `\cdot `\cdot;```]

\square(500,500)/>`>`>`>/[\cdot `\cdot `\cdot `\cdot;```]

\morphism(0,1250)/>/<-500,0>[\cdot`\cdot;]

\morphism(-500,1250)/>/<500,-250>[\cdot`\cdot;]

\morphism(0,1250)/>/<500,0>[\cdot`\cdot;]

\morphism(0,1250)/>/<500,-250>[\cdot`\cdot;]

\morphism(500,1250)/>/<500,-250>[\cdot`\cdot;]

\morphism(-500,1250)/>/<0,-500>[\cdot`\cdot;]

\morphism(-500,750)/>/<500,-250>[`\cdot;]

\morphism(-500,750)/<-/<0,-500>[`\cdot;]

\morphism(-500,250)/>/<500,-250>[\cdot`\cdot;]

\efig
$$
Transversal composition is just composition in $ {\bf A} $, horizontal composition is span composition and given by pullback, and vertical composition is cospan composition given by pushout. The $ \chi $ is almost never an isomorphism but the other interchangers $ \mu, \delta, \tau $ always are. More details can be found in \cite{GP-Multiple}.

$ {\ic SpanCosp}{\bf A} $ is closely related to the product-coproduct duoidal category. Let ${\bf A} $ have finite limits and colimits and $ {\ic A} $ be the intercategory obtained from the duoidal category $ ({\bf A}, \times, 1, +, 0) $ as in Section \ref{duoidal}. There are two canonical inclusions of $ {\ic A} $ into $ {\ic SpanCosp} {\bf A} $. $ F_0 $ which takes a basic cell of $ {\ic A} $
$$
\bfig\scalefactor{.5}

\square/=`=`=`=/[*`*`*`*;```]

\place(250,250)[\scriptstyle A]

\efig
$$
to the basic cell
$$
\bfig\scalefactor{.5}

\square/<-`<-`<-`<-/[1`A`0`0;```]

\square(0,500)/<-`>`>`<-/[0`0`1`A;```]

\square(500,0)/>`<-`<-`>/[A`1`0`0;```]

\square(500,500)/>`>`>`>/[0`0`A`1;```]

\efig
$$
and $ F_1 $ which takes it to
$$
\bfig\scalefactor{.5}

\square/<-`<-`<-`<-/[1`A`1`0;```]

\square(0,500)/<-`>`>`<-/[1`0`1`A;```]

\square(500,0)/>`<-`<-`>/[A`1`0`1;```]

\square(500,500)/>`>`>`>/[0`1`A`1;```]

\efig
$$

$ F_0 $ is pseudo-lax and $ F_1 $ is colax-pseudo. There is also a canonical morphism $ G : {\ic SpanCosp} {\bf A} \to {\ic A} $ which picks out the middle object of a span of cospans
$$
\bfig\scalefactor{.5}

\square/<-`<-`<-`<-/[C`X`B`T;```]

\square(0,500)/<-`>`>`<-/[A`S`C`X;```]

\square(500,0)/>`<-`<-`>/[X`C'`T`B';```]

\square(500,500)[S`A'`X`C';```]

\place(1500,500)[\to/|->/^G]

\square(2000,250)/=`=`=`=/[*`*`*`*;```]

\place(2250,500)[\scriptstyle X]

\efig
$$

Although $ GF_0 $ and $ GF_1 $ are the identity on $ {\ic A} $, there is no conjointness or adjointness relationship between the $F$'s and $G$, in contrast with the situation of $ {\ic SM}({\bf V}) $ of Section \ref{duoidal}.

By duality we can start with a double category $ {\mathbb A} $ with a colax choice of pushouts and construct an intercategory of cospans. Because the interchanger $ \chi $ has a given direction, dualization in the transversal direction forces the switching of horizontal and vertical. So $ {\ic Cosp} {\mathbb A} $ has cospans of horizontal arrows of $ {\mathbb A} $ as its vertical arrows. The transversal arrows of $ {\ic Cosp} {\mathbb A} $ are the horizontal morphisms of $ {\mathbb A} $ and the horizontal arrows of $ {\ic Cosp} {\mathbb A} $ are the vertical arrows of $ {\mathbb A} $.

If $ {\bf A} $ is a category with pushouts and pullbacks then we can form $ {\ic Cosp} (\SSpan {\bf A}) $ and it is exactly the same as $ {\ic Span } ({\mathbb C}{\rm osp} {\bf A}) $, i.e.\ they are both ${\ic SpanCosp} {\bf A} $. De Francesco Albasini, Sabadini, Walters, have used $ {\ic SpanCosp} {\bf A} $ (for ${\bf A} $ the category of graphs) in their work on sequential and parallel composition \cite{dFASW}. 
The interchanger $ \chi $ is introduced and  its non invertibility is given a computer science justification in terms of synchronization. They mention 
``appropriate (lax monoidal) coherence equations'' but don't go into details.

If $ {\bf A} $ is a category with pushouts, we can form the intercategory of {\em double cospans}, $ {\ic Cosp^2}({\bf A}) = {\ic Cosp}({\mathbb C}{\rm osp}{\bf A}) $. Double cospans were introduced by Morton in the context of quantum field theory, first as an arXiv preprint in 2006 and later published as \cite{VM}. They were presented as Verity double bicategories so there were no transversal arrows. This was taken up in \cite{G1}, where higher cospans (and spans) were introduced, including their transversal morphisms, which are important for us here.

\subsection{Profunctors and spans in ${\cal C}{\it at} $}\label{spans.5}

\ 

An interesting example of a double category with a lax choice of pullbacks is $ \CCat $, the double category whose objects are small categories, horizontal arrows functors and vertical arrows profunctors. We choose the usual construction for pullbacks in $ {\bf Cat} $, viz.\ pairs of objects and pairs of arrows. Given double cells in $ \CCat $
$$
\bfig\scalefactor{.7}

\square/>`@{>}|{\bb}`@{>}|{\bb}`>/[{\bf B} ` {\bf A} `\ov{\bf B} `\ov{\bf A};F `R`P`\ov{F}]

\morphism(200,250)/=>/<175,0>[`;\rho]

\square(500,0)/<-``@{>}|{\bb}`<-/[{\bf A} `{\bf C} `\ov{\bf A} `\ov{\bf C};G``S`\ov{G}]

\morphism(700,250)/<=/<175,0>[`;\sigma]

\efig
$$
we take $ R \times_P S : {\bf B} \times_{\bf A} {\bf C} \tod \ov{\bf B} \times_{\ov{\bf A}} \ov{\bf C} $ to be
$$
R \times_P S ((B,C), (\ov{B},\ov{C})) = \{ (x,y) | x \in R(B,\ov{B}), y \in S (C,\ov{C}), \rho(x) = \sigma(y)\}
$$
We can represent such an element as
$$
(x, y) : (B, C) \tod (\ov{B}, \ov{C})
$$
where $ x :  B \todu{R} \ov{B}  $ and $ y : C \todu{S} \ov{C} $ and
$$
\left(\rho(x) : FB \todu{P} \ov{F}\ov{B}\right) = \left(\sigma(y) : GC \todu{P} \ov{G}\ov{C}\right).
$$
The identities are the hom functors and from the definition of morphisms in the pullbacks we see that $ \id_{\bf B} \times_{\id_{\bf A}} \id_{\bf C} = \id_{{\bf B} \times_{\bf A} {\bf C}} $, i.e.\ we have a normal choice of pullback.

On the other hand pullbacks are not strong but merely lax (normal) as the following example shows:
$$
\bfig\scalefactor{.7}

\square/>`@{>}|{\bb}`@{>}|{\bb}`>/[{\mathbbm 1} `{\mathbbm 2} `{\mathbbm 1} `{\mathbbm 1};0`\ov{R}`1`]

\morphism(200,250)/=>/<175,0>[`;]

\square(500,0)/<-``@{>}|{\bb}`<-/[{\mathbbm 2} `{\mathbbm 1} `{\mathbbm 1} `{\mathbbm 1};1``\ov{S}`]

\morphism(700,250)/<=/<175,0>[`;]

\square(0,500)/>`@{>}|{\bb}`@{>}|{\bb}`/[{\mathbbm 1}`{\mathbbm 1} `{\mathbbm 1} `{\mathbbm 2};`R`1`]

\morphism(200,750)/=>/<175,0>[`;]

\square(500,500)/<-``@{>}|{\bb}`<-/[{\mathbbm 1} `{\mathbbm 1} `{\mathbbm 2} `{\mathbbm 1};``S`]

\morphism(700,750)/<=/<175,0>[`;]

\efig
$$
A profunctor $ {\mathbbm 1} \tod {\mathbbm 1} $ is just a set and $ R \otimes \ov{R} = R \times \ov{R}$, $ S \otimes \ov{S} = S \times \ov{S} $. $ 1 : {\mathbbm 1} \tod {\mathbbm 2} $ and $ 1 : {\mathbbm 2} \tod {\mathbbm 1} $ are the constant profunctors with value $ 1 $ and $ 1 \otimes 1 = 1 $. So $ (R \otimes \ov{R}) \times_{1\otimes 1} (S \otimes \ov{S}) = R \times \ov{R} \times S \times \ov{S}$. But $ (R\times_1 S) \otimes (\ov{R} \times_1 \ov{S}) $ is the composite
$$
{\mathbbm 1} \tod {\bf 0} \tod {\mathbbm 1}
$$
which is $ 0 $. Thus the intercategory $ {\ic Span}(\CCat) $ has a non invertible interchanger $ \chi $.

A profunctor $ P : {\bf A} \tod {\bf B} $ may be viewed as a discrete fibration from $ {\bf A} $ to $ {\bf B} $ (also called a discrete bifibration over $ {\bf A} \times {\bf B} $)
$$
\bfig\scalefactor{.7}

\Atriangle/>`>`/[{\bf El}(P)`{\bf A}`{\bf B};``]

\efig
$$
which is a span in $ {\bf Cat} $. The ``element construction'' is in fact a lax embedding of double categories
$$
\CCat \to/^{ (}->/ \SSpan ({\bf Cat})
$$
$$
\bfig\scalefactor{.7}

\square(0,500)/>`@{>}|{\bb}`@{>}|{\bb}`>/[{\bf A} `{\bf B} `{\bf C} `{\bf D};F `P `Q `G]

\morphism(200,750)/=>/<175,0>[`;\scriptstyle \phi]

\place(850,750)[\longmapsto]

\square(1300,250)<700,500>[{\bf El}(P) `{\bf El}(Q) `{\bf C} `{\bf D};\phi ```]

\square(1300,750)/>`<-`<-`/<700,500>[{\bf A} `{\bf B} `{\bf El}(P) `{\bf El}(Q);F```]

\efig
$$ 
which preserves pullbacks. This gives an embedding of intercategories
$$
{\ic Span}(\CCat) \to/^{ (}->/ {\ic Span}\  \SSpan ({\bf Cat}) = {\ic Span}^2 ({\bf Cat})
$$
which is pseudo-lax. It is transversally full and faithful.

Looking at things from the other side, taking elements gives an embedding of intercategories from quintets into spans
$$
{\ic Q} (\CCat) \to/^{ (}->/ {\ic Span} (\CCat)
$$
$$
\bfig\scalefactor{.7}

\square/@{>}|{\bb}`@{>}|{\bb}`@{>}|{\bb}`@{>}|{\bb}/[{\bf A} `{\bf B} `{\bf C} `{\bf D};P `R `S `Q]

\morphism(250,200)/<=/<0,175>[`;\scriptstyle \phi]

\place (850,250)[\longmapsto]

\square(1200,0)/<-`@{>}|{\bb}`@{>}|{\bb}`<-/<700,500>[{\bf A} `{\bf El}(P) `{\bf C} `{\bf El}(Q);``{\bf El}(Q)`]

\square(1900,0)/>``@{>}|{\bb}`>/<700,500>[{\bf El}(P) `{\bf B} `{\bf El}(Q) `{\bf D};``S`]

\morphism(1450,250)/<=/<175,0>[`;]

\morphism(2260,250)/=>/<175,0>[`;]

\efig
$$
$ {\bf El}(Q) : {\bf El}(P) \tod {\bf El} (Q) $ is given by
$$
{\bf El}(P) (A \todo{f} B, C \todo{q} D) = \{ (A \todo{r} C, B \todo{s} D) | \phi (p \otimes s) = r \otimes q \}
$$
We write the condition $ \phi (p \otimes s) = r \otimes q $ symbolically as
$$
\bfig\scalefactor{.7}

\square/@{>}|{\bb}`@{>}|{\bb}`@{>}|{\bb}`@{>}|{\bb}/[A `B `C `D;p `r `s `q]

\morphism(245,160)/<-|/<0,200>[`;\scriptstyle \phi]

\place(2000,250)[(*)]

\place(-1500,250)[\ ]

\efig
$$
This embedding is lax-lax. That it is horizontally lax is not surprising as the category of elements construction is itself lax, but at first glance it might appear to be pseudo in the vertical direction. However, for a vertical composite, one finds that an element of $ {\bf El}(\phi) \otimes {\bf El}(\psi) $ is an equivalence class of squares
$$
\bfig\scalefactor{.7}

\square/@{>}|{\bb}`@{>}|{\bb}`@{>}|{\bb}`@{>}|{\bb}/[```;```]

\morphism(245,160)/<-|/<0,200>[`;\scriptstyle \psi]

\morphism(245,660)/<-|/<0,200>[`;\scriptstyle \phi]

\square(0,500)/@{>}|{\bb}`@{>}|{\bb}`@{>}|{\bb}`@{>}|{\bb}/[```;```]

\efig
$$
whereas an element of $ {\bf El}(\phi \otimes \psi) $ is a square
$$
\bfig\scalefactor{.7}

\square/@{>}|{\bb}`@{>}|{\bb}`@{>}|{\bb}`@{>}|{\bb}/[```;```]

\morphism(245,160)/<-|/<0,200>[`;\scriptstyle \theta]

\efig
$$
whose vertical sides are equivalence classes of pairs, thus giving a canonical comparison
$$
{\bf El}(\phi) \otimes {\bf El} (\psi) \to {\bf El}(\phi \otimes \psi)
$$
which is not an isomorphism in general.

This embedding, composed with the previous one, gives a lax-lax embedding
$$
{\ic Q} (\CCat)\to/^{ (}->/ {\ic Span}^2 ({\bf Cat})
$$
which is transversally full and faithful:

$$
\bfig\scalefactor{.7}

\square(0,250)/@{>}|{\bb}`@{>}|{\bb}`@{>}|{\bb}`@{>}|{\bb}/[{\bf A} `{\bf B} `{\bf C} `{\bf D};P `R `S `Q]

\morphism(250,450)/<=/<0,175>[`;\scriptstyle \phi]

\place (850,500)[\longmapsto]

\square(1300,0)/<-`>`>`<-/<700,500>[{\bf El}(R) `{\bf El}({\bf El}(\phi)) `{\bf C} `{\bf El}(Q);```]

\square(1300,500)/<-`<-`<-`<-/<700,500>[{\bf A} `{\bf El}(P) `{\bf El}(R) `{\bf El}({\bf El}(\phi));```]

\square(2000,0)<700,500>[{\bf El}({\bf El}(\phi)) `{\bf El}(S) `{\bf El}(Q) `{\bf D};```]

\square(2000,500)/>`<-`<-`>/<700,500>[{\bf El}(P) `{\bf B} `{\bf El}({\bf El}(\phi)) `{\bf El}(S);```]

\efig
$$
An object of the middle category, $ {\bf El}({\bf El}(\phi)) $ is precisely a square of elements as in $(*)$.

A profunctor $ R : {\bf A} \tod {\bf C} $ may alternatively be viewed as a discrete cofibration from $ {\bf A} $ to $ {\bf C} $,
$$
\bfig\scalefactor{.7}

\Vtriangle/`>`>/[{\bf A} `{\bf C} `{\bf A} +_R {\bf C};``]

\efig
$$
i.e. cospan. This gives a colax embedding of $ \CCat $ into the double category of cospans
$$
\CCat \to/^{ (}->/  \Cosp ({\bf Cat})
$$
$$
\bfig\scalefactor{.7}

\square(0,250)/>`@{>}|{\bb}`@{>}|{\bb}`>/[{\bf A} `{\bf B} `{\bf C} `{\bf D};F `R `S `G]

\morphism(200,450)/=>/<175,0>[`;\scriptstyle \eta]

\place(850,500)[\longmapsto]

\square(1300,0)/>`<-`<-`>/<900,500>[{\bf A} +_R {\bf C} ` {\bf B} +_S {\bf D} `{\bf C} `{\bf D};F +_\eta G  ```G]

\square(1300,500)/>`>`>`>/<900,500>[{\bf A} `{\bf B} `{\bf A} +_R {\bf C} `{\bf B} +_S {\bf D};F```]

\efig
$$
The double category $ \CCat $ has pushouts. The one-dimensional pushouts are the usual thing. For the two-dimensional ones, note that the category of profunctors with cells like $ \eta $ above can be viewed as $ {\bf Cat}/{\bf 2} $, with $ R $ corresponding to
$$
\bfig\scalefactor{.9}

\morphism(0,0)/<-/<0,300>[{\bf 2}`{\bf A} +_R {\bf C};]

\efig
$$
$ {\bf Cat}/{\bf 2} $ has pushouts and pulling back along the two functors $ {\bf 1} \to {\bf 2} $ preserves them. So $ \CCat $ has pushouts and furthermore the embedding
$$
\CCat \to/^{ (}->/  \Cosp ({\bf Cat})
$$
preserves them. Thus we get a colax-pseudo embedding of intercategories
$$
{\ic Cosp} (\CCat) \to/^{ (}->/  {\ic Cosp} \Cosp ({\bf Cat}) = {\ic Cosp}^2 ({\bf Cat}).
$$
(Remember that for a double category $ {\mathbb A} $, the horizontal arrows of $ {\ic Cosp} ({\mathbb A}) $ are the vertical arrows of $ {\mathbb A} $ and the vertical arrows of $ {\ic Cosp} ({\mathbb A}) $ are cospans of horizontal arrows in $ {\mathbb A} $, which is the only way it can be. This explains why the above embedding is pseudo in the {\em vertical} direction.)

There is also an embedding
$$
{\ic Q} (\CCat) \to/^{ (}->/  {\ic Cosp} (\CCat)
$$
$$
\bfig\scalefactor{.7}

\square(0,250)/@{>}|{\bb}`@{>}|{\bb}`@{>}|{\bb}`@{>}|{\bb}/[{\bf A} `{\bf B} `{\bf C} `{\bf D};P `R `S `Q]

\morphism(220,450)/<=/<0,175>[`;\scriptstyle \phi]

\place(850,500)[\longmapsto]

\square(1300,0)/@{>}|{\bb}`<-`<-`@{>}|{\bb}/<900,500>[{\bf A} +_R {\bf C} ` {\bf B} +_g {\bf D} `{\bf C} `{\bf D};P+_\phi  Q```Q]

\square(1300,500)/@{>}|{\bb}`>`>`/<900,500>[{\bf A} `{\bf B} `{\bf A} +_R {\bf C} `{\bf B} +_g {\bf D};P```]

\morphism(1750,200)/=>/<0,175>[`;]

\morphism(1750,750)/<=/<0,175>[`;]

\efig
$$
where the profunctor in the middle is given by

\noindent $ (P+_\phi Q)(A, B) = P(A, B) $

\noindent $ (P+_\phi Q) (A, D) = (R \otimes Q) (A, D) $

\noindent $ (P+_\phi Q) (C, B) = \emptyset $

\noindent $ (P+_\phi Q) (C, D) = Q (C, D). $

\noindent The action by morphisms is, for the most part, obvious. The only interesting case is for a morphism $ B \to D $ in $ {\bf B} +_S {\bf D} $, i.e. an element of $ S $, $ B \todo{s} D $. Then

$$
(P+_\phi Q) (A, s) : (P+_\phi Q) (A, B) \to (P+_\phi Q)(A, D)
$$
$$ A \todo{p} B \longmapsto \phi (p \otimes s).
$$
A calculation will show that this embedding is pseudo-colax.

Combining these two constructions gives us a colax-colax embedding
$$
{\ic Q} (\CCat) \to/^{ (}->/  {\ic Cosp}^2 ({\bf Cat})
$$
which the reader is invited to write out explicitly.

It looks like there could be an embedding
$$
{\ic Q} (\CCat) \to/^{ (}->/  {\ic Span} {\ic Cosp} ({\bf Cat})
$$
combining the fibration and cofibration constructions and formalizing how they interact. Indeed one could consider
$$
\bfig\scalefactor{.7}

\square(0,250)/@{>}|{\bb}`@{>}|{\bb}`@{>}|{\bb}`@{>}|{\bb}/[{\bf A} `{\bf B} `{\bf C} `{\bf D};P `R `S `Q]

\morphism(250,450)/<=/<0,175>[`;\scriptstyle \phi]

\place (850,500)[\longmapsto]

\square(1300,0)/<-`<-`<-`<-/<1000,500>[{\bf A}+_R {\bf C} `{\bf El} P+_{{\bf El}(\phi)}{\bf El}Q `{\bf C} `{\bf El} Q;```]

\square(1300,500)/<-`>`>`<-/<1000,500>[{\bf A} `{\bf El} P `{\bf A}+_R {\bf C} `{\bf El} P+_{{\bf El}(\phi)}{\bf El}Q;```]

\square(2300,0)/>`<-`<-`>/<1000,500>[{\bf El} P+_{{\bf El}(\phi)}{\bf El}Q `{\bf B}+_S{\bf D} `{\bf El}Q `{\bf D};```]

\square(2300,500)/>`>`>`>/<1000,500>[{\bf El}P `{\bf B} `{\bf El} P+_{{\bf El}(\phi)}{\bf El}Q `{\bf B}+_S{\bf D};```]

\efig
$$
This looks nice but it would be lax-colax, which is the combination that doesn't work, so something must go wrong. And indeed, this assignment is neither lax nor colax in the vertical direction.

The situation is this. $ \Cosp ({\bf Cat}) $ has pullbacks and $ \CCat \to/^{ (}->/ \Cosp ({\bf Cat}) $ preserves them so we get a pseudo-colax embedding
$$
{\ic Span} (\CCat) \to/^{ (}->/  {\ic Span} {\ic Cosp} ({\bf Cat})
$$
which can't be composed with the lax-lax
$$
{\ic Q} (\CCat) \to/^{ (}->/  {\ic Span} (\CCat)
$$
which is what we were trying to do above.

$ \SSpan ({\bf Cat}) $ also has pushouts so we can apply the $ {\ic Cosp} $ construction to the lax embedding
$$
\CCat \to/^{ (}->/  \SSpan ({\bf Cat})
$$
to get a lax-lax embedding
$$
{\ic Cosp} (\CCat) \to/^{ (}->/  {\ic Cosp} (\SSpan({\bf Cat})) = {\ic Span} {\ic Cosp} ({\bf Cat}).
$$
Thus we get a diagram of embeddings, though neither of the composites has any of the admissible laxity-colaxity structures:
$$
\bfig\scalefactor{.7}

\square/^{ (}->`^{ (}->`^{ (}->`^{ (}->/<1300,500>[{\ic Q} (\CCat) `{\ic Span} (\CCat) `{\ic Cosp} (\CCat) `{\ic Span}{\ic Cosp}({\bf Cat});lax-lax `pseudo-colax `pseudo-colax `lax-lax]

\efig
$$
We can still compose them as functions and we find that the diagram doesn't commute! Going around the top gives a span of cospans for which the middle category is $ {\bf El} P+_{{\bf El}(\phi)} {\bf El} Q $ which has two kinds of objects $ A \todo{p} B $ and $ C \todo{q} D $ and obvious morphisms. On the other hand, going around the bottom has as middle category $ {\bf El}(P+_\phi Q) $ which has three kinds of objects $ A \todo{p} B $, $ C \todo{q} D $, and $[ A \todo{r} C \todo{q} D] $ with ``obvious'' morphisms. The inclusion $ {\bf El} P +_{{\bf El}\phi} {\bf El} Q \to/^{ (}->/  {\bf El}(P+_\phi Q) $ is a basic cell in the triple category $ {\ic I}{\ic Cat} $ of intercategories.

\subsection{Generalized spans and cospans}\label{spans.6} 

\ 

All of our examples so far, except for those gotten from duoidal categories, have at least one of the interchangers $ \tau $, $ \delta $, $ \mu $, $ \chi $ being an isomorphism, and the duoidal category ones are quite special in that their horizontal and vertical cells are identities. One is left wondering if there might not be some sort of tension between the actual triple structure and the interchangers forcing one or the other to be trivial. Of course we could take the product of two of our examples, the duoidal category ones of Section \ref{duoidal.1} and the spans in a double category of Section \ref{spans.1} say, to get examples where nothing is degenerate, but that's still not very satisfying.

In this section we introduce a class of intercategories, parametrized by quintets in $ \Cat $, which generalize the spans of cospans of Section \ref{spans.4}. Our main interest here is in nice examples of intercategories in which the interchangers are non trivial.

Consider a quintet in $ \Cat $
$$
\bfig\scalefactor{.9}

\square<400,400>[{\bf A} `{\bf B} `{\bf C} `{\bf D};F ` H `K `G]

\morphism(270,250)/=>/<-85,-85>[`;\phi]

\efig
$$
in which $ {\bf B} $ and $ {\bf D} $ have pullbacks, and ${\bf C} $ and $ {\bf D} $ have pushouts. (No preservation properties are assumed on $ G $ or $ K $.)

We construct an intercategory $ {\ic SC}_\phi $ as follows. A basic cell consists of 9 objects and 12 morphisms as in
$$
\bfig\scalefactor{.7}
\place(0,0)[A_{21}]
\node a2(400,0)[FA_{21}]
\node a3(900,0)[B_2]
\node a4(1400,0)[FA_{22}]
\place(1800,0)[A_{22}]
 
\node b1(0,300)[HA_{21}]
\node b3(900,300)[KB_2]
\node b5(1800,300)[HA_{22}]

\node c1(0,800)[C_1]
\node c2(400,800)[GC_1]
\node c3(900,800)[D]
\node c4(1400,800)[GC_2]
\node c5(1800,800)[C_2]

\node d1(0,1300)[HA_{11}]
\node d3(900,1300)[KB_1]
\node d5(1800,1300)[HA_{12}]

\place(0,1600)[A_{11}]
\node e2(400,1600)[FA_{11}]
\node e3(900,1600)[B_1]
\node e4(1400,1600)[FA_{12}]
\place(1800,1600)[A_{12}]

\arrow[a3`a2;]
\arrow[a3`a4;]
\arrow[b1`c1;]
\arrow[b3`c3;]
\arrow[b5`c5;]
\arrow[c3`c2;]
\arrow[c3`c4;]
\arrow[d1`c1;]
\arrow[d3`c3;]
\arrow[d5`c5;]
\arrow[e3`e2;]
\arrow[e3`e4;]

\efig
$$
satisfying four commutativities $(i, j = 1, 2)$
$$
\bfig\scalefactor{.8}

\node a(0,0)[D]
\node b(0,500)[KB_i]
\node c(500,500)[KFA_{ij}]
\node d(1100,300)[GHA_{ij}]
\node e(1100,0)[GC_j]

\arrow[b`a;]
\arrow[b`c;]
\arrow[c`d;\phi A_{ij}]
\arrow[d`e;]
\arrow[a`e;]

\efig
$$
From this one may infer that the objects of $ {\ic SC}_\phi $ are the objects of $ {\bf A} $, and its horizontal arrows are $F$-spans in $ {\bf B} $, i.e. spans whose domain and codomain are $ F $ of something in $ {\bf A} $. Similarly, the vertical arrows are $H$-cospans in $ {\bf C} $. The transversal arrows are natural families of morphisms, i.e. $ a_{ij} : A_{ij} \to A'_{ij} $, $ b_i : B_i \to B'_i $, $ c_j : C_j \to C'_j $, $ d: D \to D' $ commuting with the structural morphisms. When $ F, G, H, K, \phi $ are all identities, this is just the ${\ic SpanCosp} {\bf A} $ of \ref{spans.4}.

Horizontal composition is span composition, i.e. given by pulling back:
$$
\bfig\scalefactor{.7}
\place(-100,0)[A_{21}]
\node a2(300,0)[FA_{21}]
\node a3(1100,0)[B_2 \times_{FA_{22}} B'_2]
\node a4(1900,0)[FA_{23}]
\place(2300,0)[A_{23}]
 
\node b1(-100,300)[HA_{21}]
\node b3(1100,300)[K(B_2 \times_{FA_{22}} B'_2)]
\node b5(2300,300)[HA_{23}]

\node c1(-100,800)[C_1]
\node c2(300,800)[GC_1]
\node c3(1100,800)[D \times_{GC_2} D']
\node c4(1900,800)[GC_3]
\node c5(2300,800)[C_3]

\node d1(-100,1300)[HA_{11}]
\node d3(1100,1300)[K(B_1 \times_{FA_{12}} B'_1)]
\node d5(2300,1300)[HA_{13}]

\place(-100,1600)[A_{11}]
\node e2(300,1600)[FA_{11}]
\node e3(1100,1600)[B_1 \times_{FA_{12}} B'_1]
\node e4(1900,1600)[FA_{13}]
\place(2300,1600)[A_{13}]

\arrow[a3`a2;]
\arrow[a3`a4;]
\arrow[b1`c1;]
\arrow[b3`c3;]
\arrow[b5`c5;]
\arrow[c3`c2;]
\arrow[c3`c4;]
\arrow[d1`c1;]
\arrow[d3`c3;]
\arrow[d5`c5;]
\arrow[e3`e2;]
\arrow[e3`e4;]

\efig
$$
The vertical arrows in the middle are pullback induced by
$$
\bfig\scalefactor{.7}

\node a1(0,400)[K(B_i \times_{FA_{i2}} B'_i)]
\node a2(450,800)[KB_i]
\node a3(1600,800)[D]
\node a4(2100,400)[GC_2]
\node a5(1600,0)[D']
\node a6(450,0)[KB'_i]
\node a7(850,400)[KFA_{i2}]
\node a8(1500,400)[GHA_{i2}]

\arrow[a1`a2;]
\arrow[a2`a3;]
\arrow[a3`a4;]
\arrow[a5`a4;]
\arrow[a6`a5;]
\arrow[a1`a6;]
\arrow[a2`a7;]
\arrow[a6`a7;]
\arrow[a7`a8;]
\arrow[a8`a4;]

\efig
$$
and are the composites
$$
K(B_i \times_{FA_{i2}} B'_i) \to KB_i \times_{KFA_{i2}} KB'_i \to KB_i \times_{GHA_{i2}} KB'_i \to D\times_{GC_2} D'.
$$

Vertical composition is dual.

The horizontal identities $ \id_A $ and $ \id_C $ are shown in the diagram
$$
\bfig\scalefactor{.7}
\place(0,0)[\ov{A}]
\node a2(300,0)[F\ov{A}]
\node a3(800,0)[F\ov{A}]
\node a4(1300,0)[F\ov{A}]
\place(1600,0)[\ov{A}]
 
\node b1(0,300)[H\ov{A}]
\node b3(800,300)[KF\ov{A}]
\node b5(1600,300)[H\ov{A}]

\node c1(0,800)[C]
\node c2(300,800)[GC]
\node c3(800,800)[GC]
\node c4(1300,800)[GC]
\node c5(1600,800)[C]

\node d1(0,1300)[HA]
\node d3(800,1300)[KFA]
\node d5(1600,1300)[HA]

\place(0,1600)[A]
\node e2(300,1600)[FA]
\node e3(800,1600)[FA]
\node e4(1300,1600)[FA]
\place(1600,1600)[A]

\arrow|b|[a3`a2;1]
\arrow|b|[a3`a4;1]
\arrow[b1`c1;]
\arrow[b3`c3;]
\arrow[b5`c5;]
\arrow[c3`c2;1]
\arrow[c3`c4;1]
\arrow[d1`c1;]
\arrow[d3`c3;]
\arrow[d5`c5;]
\arrow[e3`e2;1]
\arrow[e3`e4;1]

\efig
$$
where the vertical arrows in the middle are
$$
KFA \to^{\phi A} GHA \to GC \quad \mbox{and} \quad KF\ov{A} \to^{\phi\ov{A}} GH\ov{A} \to GC.
$$
The vertical identities, $ \Id_A $ and $ \Id_B $, which are dual, are displayed in
$$
\bfig\scalefactor{.7}
\place(-75,0)[A]
\node a2(300,0)[FA]
\node a3(800,0)[B]
\node a4(1300,0)[FA']
\place(1675,0)[A']
 
\node b1(-75,300)[HA]
\node b3(800,300)[KB]
\node b5(1675,300)[HA']

\node c1(-75,800)[HA]
\node c2(300,800)[GHA]
\node c3(800,800)[KB]
\node c4(1300,800)[GHA']
\node c5(1675,800)[HA']

\node d1(-75,1300)[HA]
\node d3(800,1300)[KB]
\node d5(1675,1300)[HA']

\place(-75,1600)[A]
\node e2(300,1600)[FA]
\node e3(800,1600)[B]
\node e4(1300,1600)[FA']
\place(1675,1600)[A']

\arrow[a3`a2;]
\arrow[a3`a4;]
\arrow|l|[b1`c1;1]
\arrow[b3`c3;1]
\arrow[b5`c5;]
\arrow[c3`c2;]
\arrow[c3`c4;]
\arrow[d1`c1;1]
\arrow|r|[d3`c3;1]
\arrow[d5`c5;]
\arrow[e3`e2;]
\arrow[e3`e4;]

\efig
$$
where the horizontal arrows in the middle are
$$
KB \to KFA \to^{\phi A} GHA \quad \mbox{and} \quad KB \to KFA' \to^{\phi A'} GHA'.
$$

We are interested in the interchangers $ \tau $, $ \delta $, $ \mu $, and $ \chi $, which are transversally special cubes by definition, so that, of the nine arrows making up such a cube, only the middle one is not an identity. Thus everything of interest is concentrated there and we abuse notation by denoting a span or cospan and even a span of cospans by its middle object, the arrows being understood. So we write $ \id_A = FA $, $ \id_C = GC $, $ \Id_A = HA$, and $ \Id_B = KB $. Then $ \Id_{\id_A} = \Id_{FA} = KFA $ and $ \id_{\Id_A} = \id_{HA} = GHA $, and $ \tau A : \Id_{\id_A} \to \id_{\Id_A} $ is exactly
$$
\hspace{5cm}\phi A : KFA \to GHA \hspace{5cm} (1)
$$

For a composable pair of $F$-spans $ B $ and $ B' $ we have $ B | B' = B \times_{FA'} B' $ so $ \Id_{B | B'} = K (B \times_{FA'} B') $ whereas $ \Id_B | \Id_{B'} $ is $ KB \times_{GHA'} KB' $ and $ \delta(B, B') : \Id_{B|B'} \to \Id_B | \Id_{B'} $ is given by
$$
\hspace{1cm} K(B \times_{FA'} B') \to^{\lambda} KB\times_{KFA'} K B' \to^{1 \times_{\phi A'} 1} KB \times_{GHA'} KB' \hspace{1cm} (2)
$$

The $ \mu $ is dual to this. $ \mu (C, C') : \dfrac{\id_C}{\id_{C'}} \to \id_{\frac{C}{C'}} $ is the composite
$$
\hspace{2cm} GC +_{KF\ov{A}} G C' \to^{1 +_{\phi \ov{A}} 1} GC +_{GH\ov{A}}  G C' \to^{\gamma} G(C +_{H\ov{A}} C') \hspace{2cm} (3)
$$

The formula for $\chi $ is similar but more complicated notationally. Consider four basic cells $ (i, j = 1, 2) $
$$
\bfig\scalefactor{.9}

\square/@{>}|{\circ}`@{>}|{\bb}`@{>}|{\bb}`@{>}|{\circ}/<700,400>[A_{ij}  `A_{i, j+1}  `A_{i+1, j}  `A_{i + 1, j+1};B_{ij} `C_{ij} `C_{i, j+1} `B_{i+1, j}]

\place(350,200)[\scriptstyle D_{ij}]

\efig
$$
with structure morphisms as above. Then $ (D_{11} \circ D_{12}) \bullet (D_{21} \circ D_{22}) $ is the pushout $ P $ of
$$
\bfig\scalefactor{.7}

\node a(0,350)[K(B_{21} \times_{FA_{22}} B_{22})\ \ \ \ \ \ \ \ ]
\node b(800,700)[\ \ \ \ \ \ \ \ D_{11} \times_{GC_{12}} D_{12}]
\node c(800,0)[\ \ \ \ \ \ \ \ D_{21} \times_{GC_{22}} D_{22}]

\arrow[a`b;]
\arrow[a`c;]

\efig
$$
whereas $ (D_{11} \bullet D_{21}) \circ (D_{12} \bullet D_{22}) $ is the pullback $ Q $ of 
$$
\bfig\scalefactor{.7}

\node a(0,0)[D_{12} +_{KB_{22}} D_{21}\ \ \ \ \ \ \ \ ]
\node b(0,700)[D_{11} +_{KB_{21}} D_{21}\ \ \ \ \ \ \ \ ]
\node c(800,350)[\ \ \ \ \ \ \ \ \ \ G(C_{12} +_{HA_{22}} C_{22})]

\arrow[a`c;]
\arrow[b`c;]

\efig
$$
A morphism from a pushout to a pullback is given by four morphisms satisfying four equations. The ones giving $ \chi $ are:
$$
D_{i1} \times_{GC_{i2}} D_{i2} \to^{proj_j} D_{ij} \to^{inj_i} D_{1j} +_{KB_{2j}} D_{2j}
$$
It can be difficult to determine when such a $ \chi $ is an isomorphism.

The input data for an instance of $ \chi $ is a $ 5 \times 5 $ array of objects connected by a number (40) of arrows, but the pullbacks and pushouts needed only involve the $ 3 \times 3 $ part at its centre (with its 12 arrows), which gives the following cospan of spans:
$$
\bfig\scalefactor{.7}
\node a1(0,0)[D_{21}]
\node a2(600,0)[GC_{22}]
\node a3(1200,0)[D_{22}]

\node b1(0,400)[KB_{21}]
\node b2(600,400)[KFA_{22}]
\node b3(1200,400)[KB_{22}\rlap{\phantom{the resulting diagram} (4)}]

\node c1(0,800)[D_{11}]
\node c2(600,800)[GC_{12}]
\node c3(1200,800)[D_{12}]

\arrow[a1`a2;]
\arrow[a3`a2;]

\arrow[b1`a1;]
\arrow[b2`a2;]
\arrow[b3`a3;]

\arrow[b1`b2;]
\arrow[b3`b2;]

\arrow[b1`c1;]
\arrow[b2`c2;]
\arrow[b3`c3;]

\arrow[c1`c2;]
\arrow[c3`c2;]

\efig
$$
We can take the pullback of each row and then the pushout of the resulting diagram or the other way around and we get the canonical lim-colim comparison morphism $ \theta $ from the pushout $ P' $ of 
$$
\bfig\scalefactor{.7}

\node a(0,350)[KB_{21} \times_{KFA_{22}} KB_{22}\ \ \ \ \ \ \ \ ]
\node b(800,700)[\ \ \ \ \ \ \ \ D_{11} \times_{GC_{12}} D_{12}]
\node c(800,0)[\ \ \ \ \ \ \ D_{21} \times_{GC_{22}} D_{22}]

\arrow[a`b;]
\arrow[a`c;]

\efig
$$
to the pullback $ Q' $ of
$$
\bfig\scalefactor{.7}

\node a(0,0)[D_{12} +_{KB_{22}} D_{22}\ \ \ \ \ \ \ ]
\node b(0,700)[D_{11} +_{KB_{21}} D_{21}\ \ \ \ \ \ \ ]
\node c(800,350)[\ \ \ \ \ \ \ GC_{12} +_{KFA_{22}} GC_{22}]

\arrow[a`c;]
\arrow[b`c;]

\efig
$$
Let us say that {\em $G$-pullbacks commute with $K$-pushouts} if this $ \theta $ is an isomorphism for all cospans of spans of the above form, i.e. commutative diagrams in $ {\bf D} $ where the horizontal arrows in the middle are $ K{b_1} $ and $ K{b_2} $  and the vertical ones in the middle are $ Gc_1 \cdot \phi A_{22} $ and $ Gc_2 \cdot \phi A_{22} $, not necessarily coming from cells of $ {\ic SC}_\phi $. For example, if $ F, G, H, K, \phi $ are all identities, this says that pullbacks commute with pushouts, a condition that almost never holds.

Now the $ P' $ and $ Q' $ are almost the same as $ P $ and $ Q $ but the pushout is over the wrong object, and similarly for the pullback. We have the canonical comparison
$$
\kappa : K(B_{21} \times_{FA_{22}} B_{22}) \to KB_{21} \times_{KFA_{22}} KB_{22}
$$
which induces an epimorphism:
$$
1 +_\kappa 1 : P \to P'
$$
Similarly we have
$$
GC_{12} +_{KFA_{22}} GC_{22} \to^{1 +_{\phi A_{22}} 1}  GC_{12} +_{GHA_{22}} GC_{22} \to^\gamma G(C_{12} +_{HA_{22}} C_{22})
$$
inducing a monomorphism
$$
Q' \to^{1 \times_\psi 1} Q'' \to^{1 \times_\gamma 1} Q
$$
where $ \psi = 1 +_{\phi A_{22}} 1 $ and $ Q'' $ is the pullback over the middle object. Then
$$
\hspace{3cm} \chi = (P \to^{1 +_\kappa 1} P' \to^\theta Q' \to^{1 \times_\psi 1} Q'' \to^{1 \times_\gamma 1} Q) \hspace{3cm} (5)
$$

\begin{theorem} In $ {\ic SC}_\phi $ we have:
\vspace{.2cm}

\noindent (1) $ \tau $ is an isomorphism if and only if $ \phi $ is;

\vspace{.2cm}

\noindent (2) $ \delta $ is an isomorphism if and only if $ K$ preserves $F$-pullbacks and $ \phi $ is a monomorphism;

\vspace{.2cm}

\noindent (3) $ \mu $ is an isomorphism if and only if $ G $ preserves $H$-pushouts and $ \phi $ is an epimorphism;

\vspace{.2cm}

\noindent (4) $ \chi $ is an isomorphism if and only if

\noindent\hspace{.4cm} (a) $ \phi $ is an isomorphism,

\noindent\hspace{.4cm} (b) $ K $ transforms $F$-pullbacks into quasi-pullbacks,

\noindent\hspace{.4cm} (c) $ G $ transforms $H$-pushouts into quasi-pushouts,

\noindent\hspace{.4cm} (d) $G$-pullbacks commute with $K$-pushouts.

\end{theorem}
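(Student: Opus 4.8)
The plan is to read each interchanger off its explicit description---formula $(1)$ for $\tau$, $(2)$ for $\delta$, $(3)$ for $\mu$, and the five-step factorisation $(4)$ for $\chi$---and to decide invertibility factor by factor, using two elementary principles and their duals. Principle (M): for a pullback, the base-change comparison along a map $p \to q$ (replacing the base $p$ by $q$, the legs factoring through $p \to q$) is always a monomorphism, and it is invertible precisely when $p \to q$ is a monomorphism. Dually, Principle (E): for a pushout, the cobase-change comparison along $X \to X'$ (replacing the apex) is always an epimorphism, and is invertible precisely when $X \to X'$ is an epimorphism. I shall also use that $K$ (resp.\ $G$) preserves the relevant pullback (resp.\ pushout) exactly when the comparison $\lambda$ of $(2)$ (resp.\ $\gamma$ of $(3)$) is invertible, and that ``quasi-pullback'' and ``quasi-pushout'' mean, respectively, that $\kappa$ is epi and that $\gamma$ is mono.

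Parts (1)--(3) are then short. For (1), formula $(1)$ identifies $\tau A$ with $\phi A$, so $\tau$ is invertible iff every component of $\phi$ is, i.e.\ iff $\phi$ is an isomorphism. For (2), $(2)$ exhibits $\delta$ as $(1 \times_{\phi A'} 1) \circ \lambda$; by (M) the first factor is invertible iff $\phi$ is mono, and $\lambda$ is invertible iff $K$ preserves $F$-pullbacks, which gives sufficiency. For necessity I specialise the basic cells: taking the $F$-spans to be identities makes $\lambda$ an identity, so $\delta$ reduces to $1 \times_{\phi} 1$ and its invertibility forces $\phi$ mono by (M); once $\phi$ is mono the factor $1 \times_\phi 1$ is a genuine isomorphism, and cancelling it from $\delta$ shows $\lambda$ invertible, i.e.\ $K$ preserves $F$-pullbacks. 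Part (3) is strictly dual, with (E) in place of (M) and $\gamma$ in place of $\lambda$, yielding $\phi$ epi and $G$ preserving $H$-pushouts.

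For (4) I use the factorisation $(4)$, namely $\chi = (1 \times_\gamma 1) \circ (1 \times_\psi 1) \circ \theta \circ (1 +_\kappa 1)$ with $\psi = 1 +_{\phi A_{22}} 1$. Sufficiency is immediate once each factor is seen to be invertible under (a)--(d): by (E) the cobase-change $1 +_\kappa 1$ is invertible because $\kappa$ is epi (b); $\theta$ is invertible because $G$-pullbacks commute with $K$-pushouts (d); $1 \times_\psi 1$ is invertible because $\psi = 1 +_\phi 1$ is an isomorphism, being a cobase-change of the iso $\phi$ of (a), so (M) applies; and $1 \times_\gamma 1$ is invertible because $\gamma$ is mono (c), again by (M). For necessity I again specialise, aiming to render all but one factor trivially invertible so that $\chi$ iso isolates the remaining comparison: cells with trivial $\phi$-interaction but non-trivial $F$-spans (resp.\ $H$-cospans) isolate $\kappa$ (resp.\ $\gamma$) and give (b) (resp.\ (c)); the residual freedom isolates $\theta$ and gives (d); and complementary degenerations of the cospan and span data, collapsing $\chi$ onto the base-change of the $\delta$-analysis and onto the cobase-change of the $\mu$-analysis, force $\phi$ to be simultaneously mono and epi, hence an isomorphism, giving (a).

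The hard part is this necessity half of (4). The four factors of $(4)$ are only one-sidedly invertible in general---$1 +_\kappa 1$ is always epi, while $1 \times_\psi 1$ and $1 \times_\gamma 1$ are always mono---so one cannot shortcut via ``mono $+$ epi $=$ iso'': the ambient categories here, such as $\Cat$, are not balanced. Consequently each of (a)--(d) must be extracted by an explicit, carefully chosen degeneration of the basic cells, and the verification must be carried out on the concrete $(\mathrm{co})$limit comparisons rather than abstractly. The most delicate point is the isomorphism---as opposed to merely mono in (2) or merely epi in (3)---character of $\phi$ in (a): since $\phi$ enters $\chi$ only through $\psi = 1 +_\phi 1$, the $1 \times_\psi 1$ factor on its own detects only one half of this condition, and pinning down the other half requires a second, complementary specialisation together with the observation that $\chi$ carries both the pullback-wrapping of $\delta$ and the pushout-wrapping of $\mu$. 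Checking that these specialisations do not inadvertently trivialise the very comparison they are meant to test is the bookkeeping at the heart of the argument.
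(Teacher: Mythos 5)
Your handling of (1)--(3) and of the sufficiency half of (4) is correct and is essentially the paper's own argument: read each interchanger off its explicit formula, use that base-change comparisons of pullbacks are monomorphisms and cobase-change comparisons of pushouts are epimorphisms (your principles (M) and (E); the paper's proof of (2) misprints ``epimorphism'' where monomorphism is meant), cancel mono/epi factors against an invertible composite, and get the converse halves of (2) and (3) from the kernel-pair-diagonal (resp.\ cokernel-pair-codiagonal) specialisation. Likewise, the factorisation $(4)$ plus the cancellation lemma shows, exactly as in the paper, that $\chi$ invertible forces all four factors $1 +_\kappa 1$, $\theta$, $1 \times_\psi 1$, $1 \times_\gamma 1$ to be invertible, which already yields (d) by definition (no ``residual'' specialisation is needed there).

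The genuine gap is your necessity argument for (4)(a). Your two specialisations can deliver at most that $\phi$ is a monomorphism and an epimorphism, and you then conclude ``hence an isomorphism''---precisely the inference you yourself rule out a paragraph later when you correctly observe that the ambient categories (e.g.\ $\Cat$) are not balanced; as written, (a) is not proved. This failure propagates: the paper's extraction of (b) and (c) \emph{uses} (a). At the cells $(\Id_B, \Id_{B'}; \Id_B, \Id_{B'})$ the legs of the diagram whose pushout is $P'$ are $1 \times_{\phi A} 1$, and only because $\phi$ is already known invertible do these legs become isomorphisms, so that $1 +_\kappa 1$ is identified with the codiagonal of the cokernel pair of $\kappa$ and its invertibility gives $\kappa$ epi. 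Your plan of deducing (b) and (c) first, from ``cells with trivial $\phi$-interaction,'' has no construction behind it: every basic cell of ${\ic SC}_\phi$ carries the four commutativities through $\phi A_{ij}$, so $\phi$ cannot be made to disappear by choosing cells. The missing idea is the paper's observation that $\tau$ is a special case of $\chi$: evaluating $\chi$ at a degenerate configuration mixing the two identity cells $\Id_{\id_A}$ and $\id_{\Id_A}$ (for instance $\Id_{\id_A}$ in the two diagonal positions and $\id_{\Id_A}$ in the two off-diagonal ones), the pushout $P$ collapses to $KFA$, the pullback $Q$ collapses to $GHA$, and $\chi$ becomes literally $\phi A : KFA \to GHA$. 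Hence $\chi$ invertible gives $\phi$ invertible on the nose, with no appeal to balancedness, and (a) is then available for the cokernel-pair arguments that prove (b) and (c).
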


\begin{proof}

\noindent (1) $ \tau A = \phi A $ so (1) is obvious.

\vspace{.2cm}

\noindent (2) $ \delta $ is given in formula (2) above as $ (1\times_{\phi A'} 1) \lambda $, and $ 1 \times_{\phi A'} 1 $ is a monomorphism, so $ \delta $ is an isomorphism if and only if both $ \lambda $ and $ 1 \times_{\phi A'} 1 $ are. That $ \lambda $ is an isomorphism is the definition of $ K $ preserving the $ F $-pullback in question, and every $ F $-pullback arises in this way. Indeed, the $ F $-pullback
$$
\bfig\scalefactor{.7}

\Ctriangle/<-``>/[B`B\times_{FA'} B' `B';``]

\Dtriangle(500,0)/`>`<-/[B `FA' `B';`b`b']

\efig
$$
will come up if $ \delta $ is applied to the $ F $-spans
$$
[b] = FA'  \to/<-/^b B \to^b FA'  \ \ \mbox{and} \ \ [b'] = FA' \to/<-/^{b'}  B' \to^{b'} FA'
$$
Consider
$$
\bfig\scalefactor{.8}

\node a1(0,0)[KB']
\node a2(0,600)[KB]
\node a3(700,600)[KB]
\node a4(1100,300)[GHA']
\node a5(700,0)[KB']
\node a6(300,300)[KFA']

\arrow[a1`a6;]
\arrow[a2`a6;]
\arrow[a2`a3;1_{KB}]
\arrow[a3`a4;]
\arrow[a6`a4;\phi A']
\arrow|b|[a1`a5;1_{KB'}]
\arrow[a5`a4;]

\efig
$$
$ 1 \times_{\phi A'} 1$ is the morphism this induces from the pullback of the three objects on the left to the pullback of the other three. So, if $ \phi A' $ is monic then $ 1 \times_{\phi A'} 1 $ is an isomorphism. Conversely, we can take $ B \to FA' $ and $ B' \to FA' $ to be the identity $ 1_{FA'} $, and then $ 1\times_{\phi A'} 1 $ is the diagonal for the kernel pair of $ \phi A' $ and that's invertible if and only if $ \phi A' $ is monic.

\vspace{.2cm}

\noindent (3) This is the dual to (2).

\vspace{.2cm}

\noindent (4) $ \chi $ is given by (5) above in which $ 1 +_\kappa 1 $ is an epimorphism and $1 \times_\psi 1 $ and $ 1 \times_\gamma 1 $ are monomorphisms, so $ \chi $ is invertible if and only each of these morphisms as well as $ \theta $ are.

First of all, direct calculation shows that $ \chi $ applied to ($\Id_{id_A}$, $\id_{\Id_A}$; $\id_{\Id_A}$, $\Id_{\id_A}$) is $ \phi A : KFA \to GHA $, so if $ \chi $ is an isomorphism, then $ \phi $ is necessarily one too.

Assume now that $ \chi $ is an isomorphism. Then so is $ \phi $ which gives (a).

To say that $ K $ transforms $F$-pullbacks into quasi-pullbacks means that the canonical morphism
$$
\kappa : K(B \times_{FA} B') \to KB \times_{KFA} KB'
$$
is an epimorphism, for every diagram
$$
\bfig\scalefactor{.9}

\node a(0,0)[B']
\node b(0,400)[B]
\node c(300,200)[FA]

\arrow|b|[a`c;b']
\arrow[b`c;b]

\efig
$$
If we evaluate $ \chi $ at ($\Id_{[b]}$, $\Id_{[b']} $; $\Id_{[b]} $, $\Id_{[b']} $), then the legs of the diagram whose pushout is $ P' $ are both equal to $ 1 \times_{\phi A} 1 $, which is an isomorphism. Thus $ 1 \times_\kappa 1 : P \to P' $ is the codiagonal for the cokernel pair of $ \kappa $, and as $ 1 \times_\kappa 1 $ is invertible, $ \kappa $ is an epimorphism. This proves (b).

(c) is dual to (b).

(d) means that $ \theta $ is an isomorphism for all diagrams (4) coming from cells of $ {\ic SC}_\phi $, but in fact all such diagrams come from cells of $ {\ic SC}_\phi $. Indeed, one can take the cells
$$
\bfig\scalefactor{.8}

\square/@{>}|{\cc}`@{>}|{\bb}`@{>}|{\bb}`@{>}|{\cc}/[A_{22} `A_{22} `A_{22} `A_{22};{[}b_i{]} `{[}c_i{]} `{[}c_i{]} `{[b}_i{]}]

\place(250,250)[\scriptstyle D_{ij}]

\efig
$$

Conversely, assume that (a)-(d) are satisfied. Then from (a) we know that $ \phi $ is an isomorphism so $ \psi = 1+_{\phi_{A_{22}}} 1  $ is also and thus $ 1 \times_\psi 1 $ is too. From (b) we get that $ \kappa $ is an epimorphism so $ 1 +_\kappa 1 $ is an isomorphism. Similarly (c) shows that $ 1 \times_\gamma 1 $ is an isomorphism. (d) is just a special case of $ \theta $ being an isomorphism.

\end{proof}

This theorem gives many examples of intercategories in which none of the interchangers are invertible. Any quintet in which the $ \phi $ is neither monic nor epic gives one, for example.

There is a similar class of examples generalizing the double spans of Section \ref{spans.2}. It starts again with a quintet
$$
\bfig\scalefactor{.8}

\square<400,400>[{\bf A} `{\bf B} `{\bf C} `{\bf D};F ` H `K `G]

\morphism(270,250)/=>/<-95,-95>[`;\phi]

\efig
$$
but now $ {\bf B}, {\bf C}, {\bf D} $ are required to have pullbacks. The details are more complicated and will appear elsewhere.

\section{The intercategory ${\ic Set} $} \label{Set}

\subsection{Intermonads}\label{Set.1}

\ 

As is well known, a small category is a monad in the bicategory of spans, or better, a vertical monad in the double category $ {\mathbb S}{\rm et} $ of sets, functions and spans. Better because it is here that functors appear naturally. So a small category corresponds to a lax functor $ {\mathbbm 1} \to {\mathbb S}{\rm et} $.

Examining the notion of small double category, which has two kinds of morphisms, cells and various domains and codomains, we see a span of spans. We wish to code up the compositions and identities as a sort of double monad. This will live in the intercategory $ {\ic Span} ({\mathbb S}{\rm pan} ({\bf Set})) = {\ic Span} ({\mathbb S}{\rm et})= {\ic Span}^2 ({\bf Set}) $ which we call {\em the intercategory of sets} and denote $ {\ic Set} $. A small double category will then turn out to be a lax-lax morphism of intercategories $ {\mathbbm 1} \to {\ic Set}$. 

Let us examine the structure of a lax-lax functor from $ {\mathbbm 1} $ to an arbitrary intercategory $ {\ic A} $. The unique basic cell of $ {\mathbbm 1} $ gives
$$
\bfig\scalefactor{.6}

\square[*`*`*`*;\id`\Id`\Id`\id]

\place(250,250)[\scriptstyle \id_\Id]

\place(1000,250)[\to/|->/]

\square(1500,0)[A`A`A`A;t`T`T`t]

\place(1750,250)[\scriptstyle D]

\efig
$$
$ t $ is a horizontal monad whose structure is given by special horizontal cells
$$
\bfig\scalefactor{.6}

\morphism(0,400)|a|/>/<600,0>[A`A;\id]

\morphism(400,0)|b|/>/<600,0>[A`A;t]

\morphism(0,400)/=/<400,-400>[A`A;]

\morphism(600,400)/=/<400,-400>[A`A;]

\place(500,200)[\scriptstyle u]

\morphism(1500,400)|a|<600,0>[A`A;t]

\morphism(1900,0)|b|<1200,0>[A`A;t]

\morphism(2100,400)|a|/>/<600,0>[A`A;t]

\morphism(1500,400)|b|/=/<400,-400>[A`A;]

\morphism(2700,400)/=/<400,-400>[A`A;]

\place(2300,200)[\scriptstyle m]

\efig
$$
composed in the transversal direction. $ T $ is a vertical monad whose structure is given by vertical cells, $ U $ and $ M $, also composed in the transversal direction.

The main structure is on $ D $. It is a horizontal and vertical monad whose structural morphisms are cubes
$$
\bfig\scalefactor{.8}

\square(0,300)/>`>``/[A ` A `A `;\id_A `T ` `]

\square(300,0)[A ` A`A `A; t ` T `T`t]

\morphism(0,800)|b|/=/<300,-300>[A` A;]

\morphism(500,800)/=/<300,-300>[A ` A;]

\morphism(0,300)|b|/=/<300,-300>[A`A;]

\place(550,250)[\scriptstyle D]

\morphism(130,450)/=/<80,-80>[`;]

\place(400,650)[\scriptstyle u]

\place(500,-200)[u : \id_T \to D]

\square(1500,300)/>`>``/[A ` A `A `;t `\Id_A ` `]

\square(1800,0)[A ` A `A `A;t ` T `T `t]

\morphism(1500,800)|b|/=/<300,-300>[A` A;]

\morphism(2000,800)/=/<300,-300>[A` A;]

\morphism(1500,300)|b|/=/<300,-300>[A`A;]

\place(2050,250)[\scriptstyle D]

\place(1650,400)[\scriptstyle U]

\morphism(1900,700)/=/<80,-80>[`;]

\place(2000,-200)[U : \Id_t \to D]

\efig
$$

$$
\bfig\scalefactor{.8}

\square(0,300)/>`>``/[A ` A `A `;t ` T ` `]

\morphism(500,800)/>/[A`A;t]

\square(300,0)|arrb|<1000,500>[A ` A `A `A; t `T `T`t]

\morphism(0,800)/=/<300,-300>[A` A;]

\morphism(1000,800)/=/<300,-300>[A ` A;]

\morphism(0,300)/=/<300,-300>[A`A;]

\place(800,250)[\scriptstyle D]

\morphism(150,400)/=/<50,-50>[`;]

\place(600,650)[\scriptstyle m]

\place(800,-250)[m: D|D \to D]

\square(2000,300)/>```/[A ` A ``;t `  ` `]

\square(2300,0)[A ` A`A`A; t` T `T `t]

\morphism(2000,800)|l|/>/<0,-250>[`A;T]

\morphism(2000,550)|l|/>/<0,-190>[`;T]

\morphism(2000,800)|b|/=/<300,-300>[A` A;]

\morphism(2500,800)/=/<300,-300>[A ` A;]

\morphism(2000,300)|b|/=/<300,-300>[A`A;]

\place(2550,250)[\scriptstyle D]

\place(2150,400)[\scriptstyle M]

\morphism(2370,700)/=/<50,-50>[`;]

\place(2350,-250)[M: \dfrac{D}{D} \to D]

\efig
$$
These must satisfy the following conditions.

\noindent (1) (Horizontal monad)
$$
\bfig\scalefactor{.7}

\qtriangle<700,500>[\id_T | D ` D | D `D;u | D`\lambda `m]

\ptriangle(700,0)/<-`>`>/<700,500>[D | D ` D | \id_T `D; D | u ``\rho]

\square(2500,0)<800,500>[D | D | D `D | D `D | D `D;D | m `m | D `m `m]

\efig
$$

\noindent (2) (Vertical monad)
$$
\bfig\scalefactor{.7}

\qtriangle<700,500>[\frac{\Id_t}{D}  `\frac{D}{D} `D;\frac{U}{D}`\lambda`M]

\ptriangle(700,0)/<-`>`>/<700,500>[\frac{D}{D}`\frac{D}{\Id_t} `D;\frac{D}{U}``\rho]

\square(2500,0)<700,700>[\dfrac{D}{\dfrac{D}{D}}`\dfrac{D}{D}`\dfrac{D}{D} `D; \frac{D}{M}`\frac{M}{D}`M`M]

\efig
$$

\noindent (3) (Horizontal/vertical compatibility)

$$
\bfig

\morphism(150,700)[\dfrac{D | D}{D | D}`\left.\dfrac{D}{D}\right|\dfrac{D}{D};\chi]

\morphism(150,700)<-150,-400>[\dfrac{D | D}{D | D}`\dfrac{D}{D};\frac{m}{m}]

\morphism(650,700)<150,-400>[\left.\dfrac{D}{D}\right|\dfrac{D}{D}`D | D;M | M]

\morphism(0,300)|b|<400,-300>[\dfrac{D}{D}`D;M]

\morphism(800,300)|b|<-400,-300>[D | D`D;m]

\morphism(1650,700)[\dfrac{\id_T}{\id_T}`\id_{\frac{T}{T}};\mu]

\morphism(1650,700)<-150,-400>[\dfrac{\id_T}{\id_T}`\dfrac{D}{D};\frac{u}{u}]

\morphism(2150,700)<150,-400>[\id_{\frac{T}{T}}`\id_T;\id_M]

\morphism(1500,300)|b|<400,-300>[\dfrac{D}{D}`D;M]

\morphism(2300,300)|b|<-400,-300>[\id_T`D;u]

\efig
$$

$$
\bfig

\morphism(150,700)[\Id_{t | t}`\Id_t | \Id_t;\delta]

\morphism(150,700)<-150,-400>[\Id_{t | t}`\Id_t;\Id_m]

\morphism(650,700)<150,-400>[\Id_t | \Id_t`D | D;U | U]

\morphism(0,300)|b|<400,-300>[\Id_t`D;U]

\morphism(800,300)|b|<-400,-300>[D | D`D;m]

\morphism(1650,700)[\Id_{\id_A}`\id_{\Id_A};\tau]

\morphism(1650,700)<-150,-400>[\Id_{\id_A}`\Id_t;\Id_u]

\morphism(2150,700)<150,-400>[\id_{\Id_A}`\id_T;\id_U]

\morphism(1500,300)|b|<400,-300>[\Id_t`D;U]

\morphism(2300,300)|b|<-400,-300>[\id_T`D;u]

\efig
$$

\begin{definition} We call a quintuple $ (D, u, m, U, M) $ as above satisfying conditions (1)-(3) an {\em intermonad} in $ {\ic A} $. \end{definition}

Not surprisingly, an intermonad in $ {\ic Set} $ is a small (strict) double category. Conditions (3) express the interchange law. More generally, for a category $ {\bf A} $ with pullbacks, an intermonad in $ {\ic Span^2} {\bf A} $ is a double category object in $ {\bf A} $.

If $ {\cal A} $ is a bicategory, then an intermonad in the intercategory $ {\ic Q}({\cal A}) $ of quintets, as introduced in Section \ref{Verity}, reduces to a pair of monads with a distributive law between them.

Thus we see that distributive laws and double categories are both special cases of the same construction! This formalizes the idea that a double category is ``just two categories with the same objects plus some way of relating the two kinds of arrows''.

If $ {\bf V} $ is a duoidal category, considered as an intercategory as in Section \ref{duoidal}, then an intermonad in $ {\bf V} $ is what Aguiar and Mahajan \cite{AM} call a double monoid.

\subsection{Hom functors for intercategories}\label{Set.2}

\ 

We end with an example of morphism of intercategories reinforcing the idea that $ {\ic Span^2} ({\bf Set}) $ is really the intercategory of sets. Let $ {\ic A} $ be an intercategory and $ X $ a fixed object of $ {\ic A} $. Define the hom functor $ \H : {\ic A} \to {\ic Set} $ as follows.

\noindent (1) $ \H (A) $ is the set of transversal arrows $ X \to A $.

\noindent (2) For a transversal arrow $ f : A \to A' $, $ \H (f) : \H (A) \to \H (A') $ is defined by composing with $ f $ as usual. This is a function, so a transversal arrow of $ {\ic Set} $, and is strictly functorial in $ f $.

\noindent (3) For a horizontal arrow $ h : A \tod B $, $ \H (h) $ is the span
$$
\bfig\scalefactor{.7}

\Atriangle/>`>`/[\H (h) `\H (A) `\H (B);p_0`p_1`]

\efig
$$
$\H (h) $ is the set of all horizontal cells
$$
\bfig\scalefactor{.7}

\morphism(0,400)|a|/>/<600,0>[X`X;\id_x]

\morphism(400,0)|b|/>/<600,0>[A`B;h]

\morphism(0,400)|l|/>/<400,-400>[X`A;f]

\morphism(600,400)|r|/>/<400,-400>[X`B;g]

\place(500,200)[\scriptstyle \phi]

\efig
$$
with $ p_0 (\phi) = f $ and $ p_1 (\phi) = g $. We consider $ \H (h) $ as a horizontal arrow in $ {\ic Set} $.

\noindent (4) For vertical arrows $ v : A \tod \ov{A} $ we define $ \H (v) $ to be the span of all vertical cells
$$
\bfig\scalefactor{.7}

\morphism(0,500)|r|/>/<400,-300>[X`A;f]

\morphism(0,0)|l|/@{<-}|{\bb}/<0,500>[X`X;\Id_X]

\morphism(0,0)|b|/>/<400,-300>[X`\ov{A};\ov{f}]

\morphism(400,200)|r|/@{>}|{\bb}/<0,-500>[A`\ov{A};v]

\place(200,100)[\scriptstyle \psi]

\efig
$$
now considered as a vertical arrow of $ {\ic Set} $.

\noindent (5) The action of $ \H $ on horizontal (resp.\ vertical) cells is supposed to be a morphism of spans and is given by transversal composition. This is strictly functorial as it should be.

So far this is just like the hom functors for double categories in \cite{P}. In particular $ \H $ will be lax on horizontal (resp.\ vertical) arrows.

However some care is needed in the definition of $ \H $ on basic cells
$$
\bfig\scalefactor{.7}

\square/>`@{>}|{\bb}`@{>}|{\bb}`>/[A`B`\ov{A}`\ov{B};h`v`w`\ov{h}]

\place(250,250)[\scriptstyle \alpha]

\efig
$$
It will be a span of spans
$$
\bfig\scalefactor{.7}

\square/<-`>`>`<-/<700,500>[\H(v)`\H(\alpha) `\H(\ov{A}) `\H(\ov{h});``t_1`]

\square(0,500)|alra|/<-`<-`<-`<-/<700,500>[\H(A) `\H(h)`\H(v)`\H(\alpha);``t_0`s_0]

\square(700,0)<700,500>[\H(\alpha)`\H(w)`\H(\ov{h})`\H(\ov{B});s_1```]

\square(700,500)/>`<-`<-`>/<700,500>[\H(h)`\H(B)`\H(\alpha)`\H(w);```]

\efig
$$
$ \H (\alpha) $ will be the set of cubes
$$
\bfig\scalefactor{.7}

\square(0,300)/>`>``/[X `X `X `;\id_x `\Id_x ` `]

\square(300,0)[A ` B `\ov{A} `\ov{B}; h ` v `w' `\ov{h}]

\morphism(0,800)|b|<300,-300>[X` A;]

\morphism(500,800)<300,-300>[X ` B;]

\morphism(0,300)|b|<300,-300>[X`\ov{A};]

\place(550,250)[\scriptstyle \alpha]

\place(150,400)[\scriptstyle \psi]

\place(400,650)[\scriptstyle \phi]

\efig
$$
But there is a choice for the (hidden) back face, either $ \Id_{\id_x} $ or $ \id_{\Id_x} $. Only $ \Id_{\id_x} $ works and we'll see why below.

\noindent (6) $ \H (\alpha) $ is the set of cubes $ c : \Id_{\id_x} \to \alpha $ with the projections $ s_0 (c) = \psi $, $ s_1 (c) = \theta $ (the right face of $ c $), $ t_0 (c) = \phi $, $ t_1 (c) = \ov{\phi} $.

\noindent (7) The horizontal laxity morphisms of $ \H $ are as follows:
$$
{\H}_h : \id_{\H}(v) \to \H (\id_v)
$$
$$
(\psi : \Id_X \to v) \longmapsto (\Id_{\id_X} \to^\tau \id_{\Id_X} \to^{\id_\psi} \id_v)
$$
and
$$
{\H}_h : \H (\alpha) | \H (\beta) \to \H (\alpha | \beta)
$$
$$
(c : \Id_{\id_X} \to \alpha, d : \Id_{\id_X} \to \beta) \longmapsto (\Id_{\id_X} \to^{\Id_{\lambda^{-1}}} \Id_{\id_{X | \id_X}} \to^\delta \Id_{\id_X} | \Id_{\id_X} \to^{c|d} \alpha | \beta)
$$

\noindent (8) The vertical laxity morphisms are as follows.
$$
{\H}_v : \Id_{\H (h)} \to \H (\Id_h)
$$
$$
(\phi : \id_X \to h) \longmapsto (\Id_{\id_X} \to^{\Id_\phi} \Id_h)
$$
$$
{\H}_v : \dfrac{\H (\alpha)}{\H (\ov{\alpha})} \ \to \ \H (\dfrac{\alpha}{\ov{\alpha}})
$$
$$
(c : \Id_{\id_X} \to \alpha, \ov{c} : \Id_{\id_X} \to \ov{\alpha}) \longmapsto \left(\Id_{\id_X} \to^{\lambda^{-1}} \frac{\Id_{\id_X}}{\Id_{\id_X}} \to^{\frac{c}{\ov{c}}} \frac{\alpha}{\ov{\alpha}} \right) .
$$

This completes the description of $ \H $. Note that in (7) we had to use $ \tau $ and $ \delta $ whereas in (8) we only used the structural isomorphisms of $ {\ic A} $. Had we defined $ \H $ using $ \id_{\Id_X} $ as domain, (7) would only use the structural isomorphism whereas (8) would need $ \tau $ and $ \mu $, both of which go in the wrong direction.

$ \H $ has to satisfy a number of conditions, namely (5)-(14) of Section 5, \cite{ICI}. This is merely a question of working through the definitions above in the context of (5)-(14) and using the coherence conditions of Section 4 in \cite{ICI}. We do a few representative examples in detail.

In all of these diagrams, the objects are spans of spans of sets and the arrows are morphisms of such. In order to show commutativity it is sufficient to take an element of the middle set and follow its paths around the diagram and verify that we get the same thing in both cases.

Let's take (5) for example. For a basic cell
$$
\bfig\scalefactor{.7}

\square[A`B`\ov{A}`\ov{B};h`v`\ov{v}`\ov{h}]

\place(250,250)[\scriptstyle \alpha]

\efig
$$
we have to verify commutativity of 
$$
\bfig\scalefactor{.7}

\square/>`>`>`<-/<800,500>[\frac{\Id_{\H (h)}}{\Id_{\H (\alpha)}}
      `\frac{\H (\Id_h)}{\H (\alpha)}
      `\H (\alpha)
      `\H \left(\frac{\Id_h}{\alpha}\right);
   \frac{{\H}_v}{\H (\alpha)} `\lambda'  `{\H}_v  `\H (\lambda')]

\efig
$$
The upper left corner is the span of spans
$$
\bfig\scalefactor{.7}

\square/<-`>`>`<-/<1300,500>[\H (A) \times_{\H (A)} \H (v)
     `\H (h) \times_{\H (h)} \H (\alpha)
      `\H (\ov{A})
      `\H (\ov{h});```]

\square(0,500)/<-`<-`<-`<-/<1300,500>[\H (A) `\H (h) 
   `\H (A) \times_{\H (A)} \H (v)
    `\H (h) \times_{\H (h)} \H (\alpha);```]

\square(1300,0)<1300,500>[\H (h) \times_{\H (h)} \H (\alpha)
    `\H (B) \times_{\H (B)} \H (w)
    `\H (\ov{h}) `\H (\ov{B});```]

\square(1300,500)/>`<-`<-`>/<1300,500>[\H (h) ` \H (B)
    `\H (h) \times_{\H (h)} \H (\alpha)
   `\H (B) \times_{\H (B)} \H (w);```]

\efig
$$
and an element of $ \H (h) \times_{\H (h)} \H (\alpha) $ is a pair consisting of a horizontal cell $ \phi : \id_X \to h $ and a cube $ c : \Id_{\id_X} \to \alpha $ whose vertical domain is $ \phi $
$$
\bfig\scalefactor{.7}

\square(0,300)/>`>``/[X `X `X `; `` `]

\square(300,0)[A ` B `\ov{A}`\ov{B};  `  ` `]

\morphism(0,800)|b|<300,-300>[X` A;]

\morphism(500,800)<300,-300>[X ` B;]

\morphism(0,300)|b|<300,-300>[X`\ov{A};]

\place(550,250)[\scriptstyle \alpha]

\place(-200,500)[c:]

\place(400,650)[\scriptstyle \phi]

\morphism(0,1300)/>/<500,0>[X`X;]

\morphism(300,1000)/>/<500,0>[A`B;]

\morphism(0,1300)/>/<300,-300>[X`A;]

\morphism(500,1300)/>/<300,-300>[X`B;]

\place(400,1150)[\scriptstyle \phi]

\efig
$$

The $ \lambda' $ on the left is the isomorphism which takes $ (\phi, c) $ to $ c $. Going around the square gives
$$
\bfig\scalefactor{.7}

\square/|->``|->`<-|/<1000,500>[(\phi, c) `(\Id_\phi , c)
    `\lambda' \left(\frac{\Id_\phi}{c}\right) \lambda'^{-1}
     `\lambda' \left(\frac{\Id_\phi}{c}\right);```]

\efig
$$
Naturality of $ \lambda' $ 
$$
\bfig\scalefactor{.7}

\square<700,500>[\frac{\Id_{\id_X}}{\Id_{\id_c}}
      `\frac{\Id_h}{\alpha}
    `\Id_{\id_X}  `\alpha;
\frac{\Id_\phi}{c} `\lambda' `\lambda' `c]

\efig
$$
gives $ \lambda' \left(\frac{\Id_\phi}{c}\right) \lambda'^{-1} = c $.

Conditions (6) and (7) are virtually the same, using only the vertical double category coherence.

Condition (8) is the transpose of (5), but because it is about horizontal composition it will involve $ \tau $ and $ \delta $ and intercategory coherence. We must verify commutativity of
$$
\bfig\scalefactor{.7}

\square<1300,500>[\id_{\H (v)} | \H (\alpha)
    `\H (\id_h) | \H (\alpha)
     `\H (\alpha)  
     `\H (\id_v | \alpha);
{\H}_h | \H (\alpha) `\lambda `{\H}_h `\H (\lambda)]

\efig
$$
An element of the top left corner is a pair consisting of a vertical cell $ \psi : \Id_X \to v $ and a cube $ c : \Id_{\id_X} \to \alpha $ whose horizontal domain is $ \psi $
$$
\bfig\scalefactor{.7}

\morphism(400,750)|r|/>/<400,-300>[X`A;]

\morphism(400,250)|l|/<-/<0,500>[X`X;\Id_X]

\morphism(400,250)|b|/>/<400,-300>[X`\ov{A};]

\morphism(800,450)|r|/>/<0,-500>[A`\ov{A};v]

\place(600,350)[\scriptstyle \psi]

\square(1300,300)/>`>``/[X `X `X `; ` ` `]

\square(1600,0)[A ` B `\ov{A} `\ov{B};  `  ` `]

\morphism(1300,800)|b|<300,-300>[X` A;]

\morphism(1800,800)<300,-300>[X ` B;]

\morphism(1300,300)|b|<300,-300>[X`\ov{A};]

\place(1850,250)[\scriptstyle \alpha]

\place(1450,400)[\scriptstyle \psi]

\place(1100,550)[c:]

\efig
$$
$ \lambda $ takes $ (\psi, c) $ to $ c $, whereas going around the square we get first of all $ ((\id_\psi) \tau, c) $, then $ ((\id_\psi) (\tau) | c) \cdot \delta \Id_{\lambda^{-1}} $ and finally we multiply by $ \lambda $ to get the long way around the diagram
$$
\bfig\scalefactor{.7}

\qtriangle(0,500)/>`=`>/<1000,500>[\Id_{\id_X}
     `\Id_{\id_X | \id_X}
     `\Id_{\id_X};
  \Id_{\lambda^{-1}}``]

\square(1000,0)/<-`>`>`<-/<1000,500>[\Id_{\id_X}
   `\id_{\Id_X} | \Id_{\id_X}
 `\alpha
  `\id_v | \alpha;\lambda `c `\id_\psi | c `\lambda]

\square(1000,500)|alra|/>`>`>`<-/<1000,500>[\Id_{\id_X | \id_X}
  `\Id_{\id_X} | \Id_{\id_X}
   `\Id_{\id_X}
   `\id_{\Id_X} | \Id_{\id_X};
\delta `\Id_\lambda `\tau | \Id_{\id_X} `\lambda]

\efig
$$
The top square is condition (30) from Section 4 of \cite{ICI}.

Conditions (9) and (10) are very much the same.

Condition (11) reduces to naturality of $ \tau $.

For condition (12) we must verify the commutativity of
$$
\bfig\scalefactor{.7}

\square/>`>``>/<1000,500>[\frac{\id_{\H (v)}}{\id_{\H (\ov{v})}}
    `\frac{\H(\id_v)}{\H(\id_{\ov{v}})}
     `\id_{\frac{\H (v)}{\H (\ov{v})}}
    `\id_{\H\left(\frac{v}{\ov{v}}\right)};
\frac{{\H}_h}{{\H}_h}
`\mu``\id_{{\H}_v}]

\square(1000,0)/>``>`>/<1000,500>[\frac{\H(\id_v)}{\H(\id_{\ov{v}})}
`\H \left(\frac{\id_v}{\id_{\ov{v}}}\right)
   `\id_{\H\left(\frac{v}{\ov{v}}\right)}
    `\H \left(\id_{\frac{v}{\ov{v}}}\right);
{\H}_v``\H (\mu) `{\H}_h]

\efig
$$
This reduces to checking that the following diagram commutes for any two vertically composable vertical cells $ \psi : \Id_X \to v $ and $ \ov{\psi} : \Id_X \to \ov{v} $.
$$
\bfig\scalefactor{.7}

\qtriangle(0,0)/>`=`>/<1000,500>[\Id_{\id_X}`\dfrac{\Id_{\id_X}}{\Id_{\id_X}}`\Id_{\id_X};\lambda'^{-1}``\lambda']

\square(1000,0)/>``>`>/<1000,500>[\dfrac{\Id_{\id_X}}{\Id_{\id_X}}`\dfrac{\Id_{\id_X}}{\id_{\Id_X}}`\Id_{\id_X}`\id_{\Id_X};\frac{\Id_{\id_X}}{\tau}``\lambda'`\tau]

\square(2000,0)<1000,500>[\dfrac{\Id_{\id_X}}{\id_{\Id_X}}`\dfrac{\id_{\Id_X}}{\id_{\Id_X}}`\id_{\Id_X}`\id_{\frac{\Id_X}{\Id_X}};\frac{\tau}{\id_{\Id_X}}``\mu`\id_{\lambda^{-1}}]

\square(3000,0)<1000,500>[\dfrac{\id_{\Id_X}}{\id_{\Id_X}}`\dfrac{\id_v}{\id_{\ov{v}}}`\id_{\frac{\Id_X}{\Id_X}}`\id_{\frac{v}{\ov{v}}};\frac{\id_\psi}{\id_{\ov{\psi}}}``\mu`\id_{\frac{\psi}{\ov{\psi}}}]

\efig
$$
where the middle square is (22) of Section 5 in \cite{ICI} and the other two squares are naturality.

Conditions (13) and (14) are similar and left to the reader.

\section{Acknowledgements}

We thank the anonymous referee for a thoughtful reading of the paper and many helpful suggestions. Section \ref{Verity.4}, exploring notions of double functor inspired by intercategories, was added. Much of Section \ref{Gray.4} was rewritten to ``tighten up'' the statement and proof of Theorem \ref{TrueGray}. Section \ref{spans.5} was greatly expanded to include an intercategory perspective on profunctors.

These changes, along with a number of minor ones, have made for a much better paper.

\begin{references*}

\bibitem{AM} M.\ Aguiar, S.\ Mahajan, Monoidal functors, species and Hopf algebras, CRM Monograph Series 29, American Mathematical Society, Providence, RI, 2010.

\bibitem{laxtransf} J.\ B\'enabou, Introduction to bicategories, Reports of the Midwest Category Seminar, Lecture Notes in Mathematics, Vol. 47, Springer, Berlin 1967, pp.\ 1-77.

\bibitem{BCZ} G.\ B\"{o}hm, Y.\ Chen, L.\ Zhang,  On Hopf monoids in duoidal categories,  Journal of Algebra,
Vol.~394, 2013, pp.~139-172.

\bibitem{BS} T.\ Booker, R.\ Street, Tannaka duality and convolution for duoidal categories, Theory Appl. Categ. 28 (2013), No.\ 6, pp.\ 166-205.

\bibitem{DPP} R.\ Dawson, R.\ Par\'e, D.\ Pronk, The Span Construction, Theory and Applications of Categories, Vol.\ 24, No.\ 13, 2010, pp.\ 302-377.

\bibitem{dFASW} L. de Francesco Albasini, N. Sabadini, R.F.C. Walters, Cospans and spans of graphs: a categorical algebra for the sequential and parallel composition of discrete systems, arXiv:0909.4136v1 [math.CT] 23 Sep 2009.

\bibitem{GaGu} R.\ Garner, N.\ Gurski, The low-dimensional structures formed by tricategories, Mathematical Proceedings of the Cambridge Philosophical Society 146 (2009), no. 3, pp.\ 551-589.

\bibitem{GPS} R.\ Gordon, A.\ J.\ Power, R.\ Street, Coherence for Tricategories, Mem.  Amer. Math. Soc., 1995, Vol.\ 117, no.\ 558.

\bibitem{G1} M.\ Grandis, Higher cospans and weak cubical categories (Cospans in Algebraic Topology, I), Theory Appl. Categ. 18 (2007), no.\ 12, pp.\ 321-347.

\bibitem{GP99} M.\ Grandis, R.\ Par\'e, Limits in double categories, Cahiers Topologie G\'eom.\ Diff\'erentielle Cat\'eg.\ 40 (1999), no.\ 3, pp.\ 162-220.

\bibitem{GP} M.\ Grandis, R.\ Par\'e, Adjoint for double categories, Cahiers Topologie G\'eom.~Diff\'erentielle Cat\'eg.\ 45 (2004), pp.\ 193-240.

\bibitem{GP3} M.\ Grandis, R.\ Par\'e, Kan extensions in double categories (On weak double categories, Part III), Theory and Applications of Categories, Vol.\ 20, No.\ 8, 2008, pp.\ 152-185.

\bibitem{ICI} M.\ Grandis, R.\ Par\'e, Intercategories, Theory and Applications of Categories, Vol. 30, 2015, No. 38, pp 1215-1255. 

\bibitem{GP-Multiple} M.\ Grandis, R.\ Par\'e, An introduction to multiple categories (On weak and lax multiple categories, I), accepted for Cahiers Topologie G\'eom.~Diff\'erentielle Cat\'eg., preprint available at Pubbl. Mat. Univ. Genova, Preprint {\bf 604} (2015), http://www.dima.unige.it/\raisebox{-4pt}{\~{}}grandis/Mlc1.pdf

\bibitem{GP2} M.\ Grandis, R.\ Par\'e, Limits in multiple categories (On weak and lax multiple categories, II), accepted for Cahiers Topologie G\'eom.~Diff\'erentielle Cat\'eg., preprint available at Pubbl. Mat. Univ. Genova, Preprint {\bf 605} (2015), http://www.dima.unige.it/\raisebox{-4pt}{\~{}}grandis/Mlc2.pdf

\bibitem{Gray} J.\ W.\ Gray, Formal category theory: adjointness for 2-categories, Lecture Notes in Mathematics, Vol.\ 391, Springer, Berlin, 1974.

\bibitem{JS} A.\ Joyal, R.\ Street, Braided Tensor Categories, Advances in Mathematics 102, pp.\ 20-78 (1993).

\bibitem{VM} J.\ C.\ Morton, Double Bicategories and Double Cospans, Journal of Homotopy and Related Structures, Vol. 4(2009), no.~1, pp.\ 389-428.

\bibitem{P} R.\ Par\'e, Yoneda theory for double categories, Theory and Applications of Categories, Vol.\ 25, No.\ 17, 2011, pp.\ 436-489.

\bibitem{Sh} M. Shulman, Constructing symmetric monoidal bicategories, arXiv:1004.0993 [math.CT], 7 Apr 2010.

\bibitem{Shl2011} M. Shulman, Comparing composites of left and right derived functors, New York J. Math 17 (2011), 75-125.

\bibitem{V} D.\ Verity, Enriched categories, internal categories and change of base, Reprints in Theory and Applications of Categories, no.\ 20, 2011, pp.\ 1-266.

\end{references*}

\end{document}